\newtheorem{theorem}{Theorem}[section]
\newtheorem{lemma}[theorem]{Lemma}
\newcommand{\height}{\hbox{\rm H}}
\theoremstyle{definition}
\newtheorem{example}[theorem]{Example}
\theoremstyle{remark}
\newtheorem{remark}[theorem]{Remark}
\numberwithin{equation}{section}
\newcommand\RR{\mathbb R}
\newcommand\cO{\mathcal{O}}
\newcommand\vol{\operatorname{vol}}
\begin{document}
\title[Lifting, restricting and sifting integral points]{Lifting, restricting and sifting integral points on affine homogeneous varieties}

\author{Alexander Gorodnik}
\address{School of Mathematics \\ University of Bristol \\ Bristol BS8 1TW, U.K.}
\email{a.gorodnik@bristol.ac.uk}
\thanks{The first author was supported by RCUK, EPSRC, ERC.
The second author was supported by ISF grant}

\author{Amos Nevo}
\address{Department of Mathematics, Technion }

\email{anevo@tx.technion.ac.il}



\dedicatory{}

\keywords{}

\begin{abstract}

In \cite{GN2}  an effective solution of the lattice point counting problem in general domains in semisimple $S$-algebraic groups and affine symmetric varieties was established.  The method relies on the mean ergodic theorem for the action of $G$ on $G/\Gamma$, and implies uniformity in counting over families of lattice subgroups admitting a uniform spectral gap. 
In the present paper we extend some methods developed in \cite{NS} and use them to establish several useful  consequences of this property, including 
\begin{enumerate}
\item Effective upper bounds on lifting for solutions of congruences in affine homogeneous varieties, 
\item Effective upper bounds on the number of integral points on general subvarieties of semisimple group varieties, 
\item Effective lower bounds on the number of almost prime points on
  symmetric varieties,
\item Effective upper bounds on almost prime solutions of Linnik-type congruence problems in  homogeneous varieties.
\end{enumerate}


%


\end{abstract}

\maketitle


\section{Introduction and Statement of results}


Throughout the paper, $F$ denotes a number field, and  $V_F$ denotes the set of absolute values
of $F$ extending the standard normalised absolute values of the rational numbers. $F_v$, $v\in V_F$,
will denote the corresponding local fields.

We introduce local and global heights.  
For Archimedean $v\in V_F$, and for $x=(x_1,\dots,x_d)\in F_v^d$, we set 
$$
\height_v(x)=\left(|x_1|^2_v+\cdots + |x_d|^2_v \right)^{1/2},
$$
and for non-Archimedean $v$, 
$$
\height_v(x)=\max\{|x_1|_v,\ldots,|x_d|_v\}.
$$
For $x=(x_1,\dots,x_d)\in F^d$, we set
$$
\height(x)=\prod_{v\in V_F} \height_v(x).
$$

\subsection{Effective lifting of solutions of congruences}\label{sec:lift}

Let $S$ be a finite subset of $V_F$ containing all Archimedean absolute values, and 
$${O}_S=\{x\in F:\, |x|_v\le 1\hbox{ for $v\notin S$}\}$$ is the ring of $S$-integers in $F$. 
We consider a system ${\sf X}$ of polynomial equations with coefficients in ${O}_S$.
Given an ideal $\mathfrak{a}$ of ${O}_S$, we denote by ${\sf X}^{(\mathfrak{a})}$
the system of polynomial equations over the factor-ring $O_S/\mathfrak{a}$
obtained by reducing ${\sf X}$ modulo $\mathfrak{a}$.
There is a natural reduction map 
$$
\pi_{\mathfrak{a}}:{\sf X}(O_S)\to {\sf X}^{(\mathfrak{a})}(O_S/\mathfrak{a}).
$$
The question whether a solution in ${\sf X}^{(\mathfrak{a})}(O_S/\mathfrak{a})$
can be lifted to an integral solution in ${\sf X}(O_S)$ is of fundamental importance
in number theory. It is closely related to the strong approximation property for algebraic
varieties (see \cite[\S7.1]{PR}). For instance, if ${\sf G}$ is a connected $F$-simple simply connected
algebraic group which is isotropic over $S$, then ${\sf G}$ satisfies the strong approximation property
(see \cite[\S7.4]{PR}) and, in particular, the map $\pi_{\mathfrak{a}}$ is surjective in this case.
For more general homogeneous varieties, 
the map $\pi_{\mathfrak{a}}$ need not be surjective, but the image 
$\pi_{\mathfrak{a}}({\sf X}(O_S))$ can be described
using the Brauer--Manin obstructions (see \cite{har,ctx,BD}).

In this paper, we consider the problem whether  
a solution in ${\sf X}^{(\mathfrak{a})}(O_S/\mathfrak{a})$
can be lifted to an integral solution in ${\sf X}(O_S)$ {\it effectively} :
given $\bar x\in {\sf X}^{(\mathfrak{a})}(O_S/\mathfrak{a})$,
can one find $x\in {\sf X}(O_S)$, with $\height(x)$ bounded in terms of $|O_S/\mathfrak{a}|$,
such that $\pi_{\mathfrak{a}}(x)=\bar x$?

We give a positive answer to this question for
affine homogeneous varieties of semisimple groups.
Let ${\sf X}\subset F^n$ be an affine variety defined over $F$ and equipped with
a transitive action (defined over $F$) of a connected simply connected $F$-simple algebraic group ${\sf G}\subset \hbox{GL}_m$. 

Let $S$ be a finite subset of $V_F$, containing
all Archemedean absolute values, such that action of
$\sf G$ on $\sf X$ is defined over $O_S$, and $\hbox{Lie}({\sf G})\cap \hbox{M}_m(O_S)$
has a basis over $O_S$ as an $O_S$-module.
We note that every sufficiently large subset $S$ of $V_F$ satisfies the above assumptions.
Moreover, the second assumption on $S$ is satisfied when $O_S$ is a principal ideal domain.
In particular, the second assumption always holds when the field $F$ has class number one.

\begin{theorem}\label{th:lift varieties}
There exist $q_0,\sigma>0$ 
such that for every ideal $\mathfrak{a}$ of ${O}_S$
satisfying $|{O}_S/\mathfrak{a}|\ge q_0$
and every $\bar x \in \pi_{\mathfrak{a}}({\sf X}({O}_{S}))$, there exists 
$x\in {\sf X}({O}_{S})$ such that 
\begin{equation}\label{eq:cong}
 \pi_{\mathfrak{a}}(x)=\bar x \quad\hbox{and}\quad \height(x)\le  |{O}_S/\mathfrak{a}|^{\sigma}.
\end{equation}
\end{theorem}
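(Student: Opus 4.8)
The plan is to reduce the lifting problem to a counting statement for integral points of bounded height lying in a fixed congruence fiber, and then to invoke the effective counting results of \cite{GN2} with uniformity over congruence subgroups having a uniform spectral gap. More precisely, fix $\bar x\in\pi_{\mathfrak a}({\sf X}(O_S))$ and choose once and for all \emph{some} lift $x_0\in{\sf X}(O_S)$ with $\pi_{\mathfrak a}(x_0)=\bar x$; we make no height assumption on $x_0$. Since ${\sf G}$ acts transitively on ${\sf X}$ and is defined over $O_S$, the fiber $\pi_{\mathfrak a}^{-1}(\bar x)\cap{\sf X}(O_S)$ is (up to finite index issues coming from the stabilizer) a union of orbits of the principal congruence subgroup $\Gamma(\mathfrak a)=\{g\in{\sf G}(O_S):g\equiv 1\ (\mathrm{mod}\ \mathfrak a)\}$ through $x_0$. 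Thus it suffices to show that $\Gamma(\mathfrak a)\cdot x_0$ (or one of boundedly many such orbits) contains a point of height at most $|O_S/\mathfrak a|^\sigma$ for a $\sigma$ independent of $\mathfrak a$ and $\bar x$.

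The second step is to count. Write $H={\sf G}(F_S)$ and let $\Gamma={\sf G}(O_S)$, a lattice in $H$ by the assumptions on $S$; let $L\subset H$ be the stabilizer of a base point, so ${\sf X}(F_S)\simeq H/L$, and let $B_T\subset H/L$ be the height ball $\{\height(\cdot)\le T\}$. By \cite{GN2} the orbit counting function $|\Gamma(\mathfrak a)\cdot x_0\cap B_T|$ is asymptotic to $\vol(B_T)/[\Gamma:\Gamma(\mathfrak a)]$ with an effective error term, and — this is the point of the present paper's hypotheses — the implied constants are uniform over the family $\{\Gamma(\mathfrak a)\}$ because these congruence subgroups enjoy a uniform spectral gap (property $(\tau)$ for ${\sf G}$ simply connected and $F$-simple, isotropic over $S$). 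Since $[\Gamma:\Gamma(\mathfrak a)]$ is polynomially bounded in $|O_S/\mathfrak a|$, say of order $|O_S/\mathfrak a|^{c}$, while $\vol(B_T)$ grows like a fixed positive power $T^{a}$ of $T$ (volume asymptotics on semisimple groups and their homogeneous varieties), the main term dominates the error term as soon as $T\ge |O_S/\mathfrak a|^{\sigma}$ for a suitable $\sigma=\sigma(a,c,\text{error exponent})$. For such $T$ the count is strictly positive, which produces the desired point $x\in{\sf X}(O_S)$ with $\pi_{\mathfrak a}(x)=\bar x$ and $\height(x)\le|O_S/\mathfrak a|^\sigma$. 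The threshold $q_0$ absorbs the finitely many small moduli for which the effective estimate is not yet in the asymptotic regime.

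A few technical points need care. First, one must pass from the orbit $\Gamma(\mathfrak a)\cdot x_0$ inside ${\sf X}(O_S)$ to an orbit on a \emph{homogeneous space} $\Gamma(\mathfrak a)\backslash H$ on which the ergodic-theoretic machinery of \cite{GN2} operates; this is where the hypothesis that $\mathrm{Lie}({\sf G})\cap M_m(O_S)$ is a free $O_S$-module enters, guaranteeing that reduction mod $\mathfrak a$ behaves well and that the stabilizer $L\cap\Gamma$ and its congruence images are under control, so that the fiber decomposes into a bounded number of $\Gamma(\mathfrak a)$-orbits. Second, the volume $\vol(B_T\cap H/L)$ and the comparison of the archimedean and non-archimedean contributions to the global height must be set up so that $B_T$ is a genuine admissible (e.g.\ well-rounded) family to which the counting theorem applies; this is standard for height balls on affine symmetric and homogeneous varieties. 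The \textbf{main obstacle} is the uniformity: ensuring that the error term in the lattice point count is bounded by $\vol(B_T)\,[\Gamma:\Gamma(\mathfrak a)]^{-1}$ times a quantity that decays in $T$ with an exponent \emph{independent of $\mathfrak a$}, which rests squarely on the uniform spectral gap for $\{\Gamma(\mathfrak a)\}$ and on quantifying how the best available spectral estimate degrades (if at all) as $|O_S/\mathfrak a|\to\infty$; everything else is bookkeeping with polynomial bounds on indices and volumes.
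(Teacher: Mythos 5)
Your plan proposes to count directly on the homogeneous variety: you would estimate $|\Gamma(\mathfrak a)\cdot x_0\cap B_T|$ where $B_T$ is a height ball in ${\sf X}(F_S)\simeq H/L$, and invoke \cite{GN2} with uniformity over $\{\Gamma(\mathfrak a)\}$. The gap is that the counting results available at the level of generality of Theorem \ref{th:lift varieties} do not do this. Theorem \ref{th:S-arith}, which the paper uses, gives uniform counting for cosets $\gamma_0\Gamma(\mathfrak a)\cap B_T$ inside the group $G$ itself. The variant for orbit counting on a homogeneous variety (Theorem \ref{th:GN2}) is stated and proved only when the stabilizer ${\sf L}$ is symmetric; for a general affine homogeneous ${\sf X}={\sf G}/{\sf L}$ there is no effective orbit-counting theorem with error terms uniform over congruence subgroups to appeal to, and obtaining one would require controlling the ergodic/equidistribution properties of $\Gamma(\mathfrak a)\backslash H$ relative to $L$-orbits, plus the well-roundedness of height balls in $H/L$ — none of which you can take for granted. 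Your own ``up to finite index issues coming from the stabilizer'' is exactly where the difficulty lives and is left unresolved: as $\mathfrak a$ varies, the number of $\Gamma(\mathfrak a)$-orbits in the fiber can grow, and the reduction of ${\sf L}(O_S)$ modulo $\mathfrak a$ is not controlled by your hypotheses.

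The paper sidesteps this entirely by never counting on ${\sf X}$. It first proves the lifting statement on the group variety (Theorem \ref{th:lifting group}): there the fiber over $\bar x$ is exactly a single coset $\gamma_0\Gamma(\mathfrak a)$, strong approximation gives surjectivity of $\pi_{\mathfrak a}$, and Theorem \ref{th:S-arith} together with $[\Gamma:\Gamma(\mathfrak a)]\asymp|O_S/\mathfrak a|^{\dim{\sf G}}$ and the volume growth of $B_T$ produces a point in $\gamma_0\Gamma(\mathfrak a)\cap B_T$ for $T=|O_S/\mathfrak a|^\sigma$. Then, to pass to ${\sf X}$, one uses Borel--Harish-Chandra to write ${\sf X}(O_S)$ as a finite union of $\Gamma$-orbits $\Gamma x_0$; any $\bar x$ in the image of $\pi_{\mathfrak a}$ has the form $\pi_{\mathfrak a}(\gamma x_0)=\pi_{\mathfrak a}(\gamma)\cdot\pi_{\mathfrak a}(x_0)$, one applies the group-variety result to replace $\gamma$ by a small $\gamma'$ with $\pi_{\mathfrak a}(\gamma')=\pi_{\mathfrak a}(\gamma)$, and then $x=\gamma'x_0$ satisfies $\pi_{\mathfrak a}(x)=\bar x$ with $\height(x)\ll\height(\gamma')^N$ for a fixed $N$ depending only on the action. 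This is where the hypotheses on ${\sf X}$ being a ${\sf G}$-variety, rather than any equidistribution on $H/L$, actually enter. You should restructure your argument along these lines: lift $\bar x$ to the group, solve the lifting problem there, and push forward along the orbit map, instead of trying to count on ${\sf X}$ directly.
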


The parameter $\sigma$ in (\ref{eq:cong}) can be explicitly computed.
For instance, for group varieties, an explicit value of $\sigma$
is given in Theorem \ref{th:lifting group} below.
The parameter $q_0$ is computable too (see Remark \ref{r:q_0} below).

\begin{remark}
The finiteness of the exponent $\sigma$  for the case of $S$-integral points in the group variety  follows from the fact that the Cayley graphs ${\sf G}^{(\mathfrak{a})}(O_S/\mathfrak{a})$
have logarithmic diameter.  The bound provided by this approach depends on a choice of generating set of ${\sf G}(O_S)$, 
and when measured in terms of the height $\height$ is of lesser quality than 
the estimate on $\sigma$ which is developed below explicitly in terms of 
geometric and representation-theoretic data of ${\sf G}$. We thank Peter Sarnak for this remark. 
\end{remark}

Let us now consider the case of a connected $F$-simple simply connected
algebraic group ${\sf G}\subset \hbox{GL}_m$ which is isotropic over $S$. 
Then it is known to satisfy the strong approximation property, and our method
gives an asymptotic formula for the number of solutions of (\ref{eq:cong}):

\begin{theorem}\label{th:lift quant}
For every $\sigma>\sigma_0$ (as in (\ref{eq:sigma_0}) below), every ideal $\mathfrak{a}$ in $O_S$
and every  $\bar x \in {\sf G}^{(\mathfrak{a})}(O_S/\mathfrak{a})$,
\begin{align*}
&\left|\left\{x\in {\sf G}(O_S);\,\, \pi_{\mathfrak{a}}(x)=\bar x,\, \height(x)\le 
|O_S/\mathfrak{a}|^{\sigma}\right\}\right|\\
=&
\frac{1}
{|{\sf G}^{(\mathfrak{a})}(O_S/\mathfrak{a})|}\cdot 
\left|\left\{x\in {\sf G}(O_S);\,\, \height(x)\le 
|O_S/\mathfrak{a}|^{\sigma}\right\}\right|\left(1+
O_\epsilon\left(|O_S/\mathfrak{a}|^{\dim({\sf G})(1-\sigma_0^{-1}\sigma)+\epsilon}\right)\right)
\end{align*}
for every $\epsilon>0$.
\end{theorem}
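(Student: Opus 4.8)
The plan is to transfer the count, via strong approximation, to a lattice--point count for the principal congruence subgroup $\Gamma(\mathfrak a)=\ker\pi_{\mathfrak a}$, and then to invoke the effective mean ergodic theorem underlying \cite{GN2} \emph{uniformly} over the family $\{\Gamma(\mathfrak a)\}_{\mathfrak a}$; this is possible because that family carries a uniform spectral gap, and it is the same strategy used in \cite{NS} for principal homogeneous spaces.

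\textbf{Reduction.} Write $\Gamma={\sf G}(O_S)$ and $G_S=\prod_{v\in S}{\sf G}(F_v)$. Since ${\sf G}$ is $F$-simple, simply connected and isotropic over $S$, it satisfies strong approximation, so $\pi_{\mathfrak a}$ is surjective and $\Gamma(\mathfrak a)$ has index $|{\sf G}^{(\mathfrak a)}(O_S/\mathfrak a)|$ in $\Gamma$. Fix $\gamma_0\in\Gamma$ with $\pi_{\mathfrak a}(\gamma_0)=\bar x$; the fibre $\pi_{\mathfrak a}^{-1}(\bar x)\cap\Gamma$ is then the coset $\gamma_0\Gamma(\mathfrak a)$. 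Being semisimple, ${\sf G}$ has trivial character group, so its image in $\hbox{GL}_m$ lies in $\hbox{SL}_m$; hence for $v\notin S$ every $g\in{\sf G}(O_v)$ has $\height_v(g)=1$, and on $\Gamma$ the global height reduces to $\height(g)=\prod_{v\in S}\height_v(g)$, a proper function on $G_S$. Writing $B_T=\{g\in G_S:\height(g)\le T\}$ and $T=Q^\sigma$ with $Q=|O_S/\mathfrak a|$, the left-hand side of the theorem equals $|\gamma_0\Gamma(\mathfrak a)\cap B_T|$ and the full count occurring in the main term equals $|\Gamma\cap B_T|$; it therefore suffices to compare the two.

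\textbf{Effective counting.} The domains $B_T$ form a well-rounded family in $G_S$: the volume grows polynomially, $\vol(B_T)\asymp T^{a}$ for an explicit $a=a_{{\sf G},S}$, one has $\vol(B_{(1+\delta)T}\setminus B_{(1-\delta)T})\ll\delta\,\vol(B_T)$ uniformly, and the requisite Lipschitz property in the group variable holds — these geometric facts are established in \cite{GN2}. On the spectral side, the automorphic representations $L^2_0(G_S/\Gamma(\mathfrak a))$ have a decay-of-matrix-coefficients (integrability) exponent bounded independently of $\mathfrak a$, i.e. $\Gamma$ has property $\tau$ with respect to congruence subgroups. Inserting these into the effective mean ergodic theorem of \cite{GN2} — smoothing $\mathbf{1}_{B_T}$ against an approximate identity of width $\delta$, which costs the boundary mass $\ll\delta\,\vol(B_T)$ and inflates the $L^2$-estimate by $\delta^{-O(\dim{\sf G})}$, and then optimizing in $\delta$ — yields, uniformly in $\gamma_0$ and in $\mathfrak a$,
\[
|\gamma_0\Gamma(\mathfrak a)\cap B_T|=\frac{\vol(B_T)}{\vol(G_S/\Gamma(\mathfrak a))}\Bigl(1+O\bigl((\vol(G_S/\Gamma(\mathfrak a))/\vol(B_T))^{\theta}\bigr)\Bigr),
\]
with $\theta=\theta({\sf G},S)>0$ determined by the uniform gap, and likewise for the fixed lattice $\Gamma$ with a strictly smaller error.

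\textbf{Conclusion.} Dividing the two asymptotics, $\vol(B_T)$ cancels and the ratio of covolumes is $[\Gamma:\Gamma(\mathfrak a)]=|{\sf G}^{(\mathfrak a)}(O_S/\mathfrak a)|$, producing the stated main term. For the error, $\vol(G_S/\Gamma(\mathfrak a))=[\Gamma:\Gamma(\mathfrak a)]\,\vol(G_S/\Gamma)\asymp Q^{\dim{\sf G}}$ while $\vol(B_T)=\vol(B_{Q^\sigma})\asymp Q^{\sigma a}$, so the relative error is $\ll Q^{\theta(\dim{\sf G}-\sigma a)}$; absorbing the logarithmic lower-order factors into $Q^\epsilon$ and taking $\sigma_0$ as in (\ref{eq:sigma_0}) — the threshold beyond which this quantity no longer dominates, chosen so that $\theta(\dim{\sf G}-\sigma a)=\dim({\sf G})(1-\sigma_0^{-1}\sigma)$ — yields the theorem for all $\sigma>\sigma_0$. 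The substantive difficulty is \emph{uniformity}: the error exponent in the mean ergodic theorem must depend on $\mathfrak a$ only through $Q=|O_S/\mathfrak a|$, which is exactly why a spectral gap uniform over all congruence ideals is indispensable, and one must verify that $\{B_T\}$ is admissible for the counting scheme with constants independent of $T$. Pinning down how the smoothing scale $\delta$ trades against the $L^2$-bound, and hence the precise $\theta$ and $\sigma_0$, is where the real work lies.
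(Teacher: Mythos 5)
Your strategy is the same as the paper's: fix $\gamma_0$ with $\pi_{\mathfrak a}(\gamma_0)=\bar x$ via strong approximation, identify the fibre with $\gamma_0\Gamma(\mathfrak a)\cap B_T$, apply the uniform effective counting result of \cite{GN2} (Theorem~\ref{th:S-arith}) to both $\gamma_0\Gamma(\mathfrak a)$ and $\Gamma$, replace $[\Gamma:\Gamma(\mathfrak a)]$ by $|{\sf G}^{(\mathfrak a)}(O_S/\mathfrak a)|$, and feed in $[\Gamma:\Gamma(\mathfrak a)]\asymp Q^{\dim\sf G}$ (Lemma~\ref{l:upper0}) and $\vol(B_T)\asymp T^{\alpha_S}$ with $T=Q^\sigma$.

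One point needs correction, because it propagates into the wrong $\sigma_0$ if taken at face value. You write the relative error from the effective counting theorem as $\bigl(\vol(G_S/\Gamma(\mathfrak a))/\vol(B_T)\bigr)^{\theta}$. That is not the functional form that Theorem~\ref{th:S-arith} provides: it gives main term $\vol(B_T)/[\Gamma:\Gamma(\mathfrak a)]$ with \emph{absolute} error $O_\epsilon(\vol(B_T)^{1-\delta_0+\epsilon})$, $\delta_0=(2n_e(p_S))^{-1}a_S/(a_S+d_S)$, hence relative error $[\Gamma:\Gamma(\mathfrak a)]\,\vol(B_T)^{-\delta_0+\epsilon}\asymp Q^{\dim{\sf G}-\sigma\alpha_S\delta_0+\epsilon}$. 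Your form would instead give $Q^{\theta(\dim{\sf G}-\sigma\alpha_S)}$, which has the wrong intercept and slope in $\sigma$ unless $\theta=1$, and would make the critical threshold $\dim{\sf G}/\alpha_S$ rather than $\dim{\sf G}/(\alpha_S\delta_0)=\sigma_0$. The subsequent sentence where you "choose $\sigma_0$ so that $\theta(\dim{\sf G}-\sigma a)=\dim{\sf G}(1-\sigma_0^{-1}\sigma)$" is therefore circular and not actually solvable given your identification of $a$ with the volume-growth exponent. Once the error is written in the correct index-times-power form, the exponent $\dim{\sf G}(1-\sigma_0^{-1}\sigma)$ falls out directly and the rest of your argument is the paper's.
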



This result indicates that the properties
$\pi_{\mathfrak{a}}(x)=\bar x$ and $\height(x)\le |O_S/\mathfrak{a}|^{\sigma}$
are asymptotically independent.

\vspace{0.2cm}

We illustrate our results on a classical example --- 
the problem of representation a quadratic form by another quadratic form
(see, for instance, \cite{om}).

\begin{example}\label{ex:lift}
Let $A$ be an integral nondegenerate symmetric $(n\times n)$-matrix and 
$B$ be an integral nondegenerate symmetric $(m\times m)$-matrix with $n\le m$.
The variety 
\begin{equation}\label{eq:X0}
{\sf X}=\{x\in \hbox{M}_{m\times n}(\mathbb{C}):\,  {}^t x B x=A\}
\end{equation}
parametrises all possible representation of the quadratic form corresponding to $A$
by the quadratic form corresponding to $B$. For simplicity, we assume that $m-n\ge 3$
and $A$ is isotropic over $\mathbb{R}$. Then if the equation ${}^t x B x=A$ has a solution over $\mathbb{R}$
and over $\mathbb{Z}_p$ for every $p$, then it has an integral solution, and the reduction map
${\sf X}(\mathbb{Z})\to {\sf X}(\mathbb{Z}/q)$ is surjective for every $q\ge 1$ (see \cite[Ch.~X]{om}).
Our results implies that under the same assumptions,
for every $q\ge 1$ and $\bar x\in \hbox{M}_{m\times n}(\mathbb{Z}/q)$ satisfying
$$
{}^t \bar x B \bar x=A\,\hbox{\rm mod}\, q,
$$
there exists $x \in \hbox{M}_{m\times n}(\mathbb{Z})$ such that
\begin{equation}\label{eq:X1}
{}^t x B x=A,\quad\quad x=\bar x\,\hbox{\rm mod}\, q,\quad\quad \height_\infty(x)\ll q^{\sigma},
\end{equation}
where $\sigma>0$ is a computable constant. For instance, when $B$ has the signature
$(\lfloor m/2\rfloor,m-\lfloor m/2\rfloor)$, this estimate holds for
\begin{equation}\label{eq:sigma_m}
\sigma>\sigma_m=\left\{
\begin{tabular}{ll}
$\frac{4m(m^2-m+1)n_e}{m-1}$, &\hbox{when $m$ is odd,}\\
$\frac{4(m-1)(m^2-m+1)n_e}{m+2}$, & \hbox{when $m$ is even,}
\end{tabular}
\right.
\end{equation}
where $n_e$ denotes the least even integer $\ge \lfloor m/2\rfloor$.
We will provide details of this computation in Section \ref{sec:lifting}.
\end{example}

The following example demonstrates that the polynomial bound established in Theorem
\ref{th:lift varieties} does not hold for other homogeneous varieties.

\begin{example}\label{ex:contr}
Let $F$ be a number field of degree $d$
with an infinite group of units and $\{\xi_1,\ldots,\xi_d\}$ be a basis of the ring ${O}$ of integers
of $F$. We consider the integral polynomial
$$
f(x_1,\ldots, x_d)=N_{K/\mathbb{Q}}(x_1\xi_1+\cdots+x_d\xi_d)
$$
and the variety ${\sf X}=\{f=1\}$. Note that the set of integral points on ${\sf X}$ 
is exactly the group $U$ of units in the number field $F$. We note that
\begin{equation}\label{eq:cont1}
|\{x\in {\sf X}(\mathbb{Z}): \height_\infty(x)\le T\}|\ll (\log T)^{r+s-1}
\end{equation}
where $r$ and $s$ denote the number of real and complex absolute values of $F$ respectively.
This claim can be checked by representing the group of units as a lattice
in $\mathbb{R}^{r+s-1}$, similarly to the proof of  Dirichlet's theorem.

We also note that there are infinitely many primes $\mathfrak{p}$ in the ring of integers $O$ of $F$ such
that 
\begin{equation}\label{eq:artin}
(O/\mathfrak{p})^\times=U \,\hbox{\rm mod}\, \mathfrak{p}.
\end{equation}
It was proved in \cite{cw} that the set of such primes has positive density 
if one assumes the Generalized Riemann Hyposesis, and in \cite{n} that 
in most cases (for instance, when $[F:\mathbb{Q}]>3$), there are infinitely many such primes unconditionally.
Now it follows from \eqref{eq:artin} that
\begin{equation}\label{eq:cont2}
|\pi_p({\sf X}(\mathbb{Z}))|\ge p-1
\end{equation}
for infinitely many prime numbers $p$. Comparing \eqref{eq:cont1} and \eqref{eq:cont2},
we conclude that the polynomial bound as in Corollary \ref{th:lift varieties} is impossible in this case.
\end{example}

\subsection{Integral points on subvarieties}
We now turn to consider the problem of bounding the number of integral points
on algebraic varieties. This has been an active field of research in recent years, and 
we refer the reader to the survey \cite{hb0} and the book \cite{br} for overviews
of results and conjectures concerning upper estimates on the number of
integral points. We will concentrate on homogeneous varieties and our methods and results are motivated by those developed in \cite[\S 4.3]{NS} . 

Given an affine variety $\sf X$ defined over a number field $F$, we set
$$
N_T({\sf X}({O}_S))=|\{x\in {\sf X}({O}_S):\, \height(x)\le T\}|
$$
where ${O}_S$ is a ring of $S$-integers in $F$.
The problem we will focus on is establishing an upper estimate on 
$N_T({\sf Y}({O}_S))$ for arbitrary proper affine subvarieties $\sf Y$ of $\sf X$. We will prove a {\it non-concentration phenomenon} for the collection of proper subvarieties of a semisimple group variety $\sf G$, namely that the number of $S$-integral points on a subvariety $\sf{Y}$ has strictly lower rate of growth than the group variety $\sf G$. 
We remark that this important property does not hold for general irreducible varieties $\sf X$. Indeed a bound of the form 
$$
N_T({\sf Y}({O}_S))\ll_{{\sf X},\deg({\sf Y})} N_T({\sf X}({O}_S))^{1-\sigma_{\sf Y}}
$$
with $\sigma_{\sf Y}>0$, where we write $\deg({\sf Y})$ for the degree of the projective closure of ${\sf
  Y}$, is false in general. This can be demonstrated by the variety
$x_1^3+x_2^3+x_3^3+x_4^3=0$, where most of rational point lie on lines
(see \cite{hb00}). But in the case of $S$-algebraic group varieties
we have the following:

\begin{theorem}\label{th:upper0}
Let ${\sf G}$ be a connected $F$-simple simply connected algebraic group defined over a number field $F$.
Let $S\subset V_F$ be a finite subset containing all Archimedean absolute values such that $\sf G$
is isotropic over $S$. Then there exists $\sigma=\sigma({\sf G},S,\dim({\sf Y}))\in (0,1)$
such that for every absolutely irreducible proper
affine subvariety ${\sf Y}$ of ${\sf G}$ defined over $F$, we have
$$
N_T({\sf Y}({O}_S))\ll_{{\sf G},\deg({\sf Y})} N_T({\sf G}({O}_S))^{1-\sigma}\,\,.
$$
\end{theorem}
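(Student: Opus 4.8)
The plan is to bound $N_T({\sf Y}(O_S))$ by counting $S$-integral points of ${\sf G}$ that reduce, modulo a suitable ideal $\mathfrak{a}$, into the reduction ${\sf Y}^{(\mathfrak{a})}(O_S/\mathfrak{a})$, and then to play the (small) size of ${\sf Y}^{(\mathfrak{a})}$ against the (large, equidistributed) count of integral points on ${\sf G}$. First I would recall from the effective lattice point count established in \cite{GN2} — together with the uniform spectral gap over the congruence quotients ${\sf G}(O_S/\mathfrak{a})$, which is exactly the input Theorem \ref{th:lift quant} draws on — that for the height ball $B_T=\{\height\le T\}$ one has, uniformly in $\mathfrak{a}$,
\[
\bigl|\{x\in{\sf G}(O_S):\ \height(x)\le T,\ \pi_{\mathfrak{a}}(x)=\bar y\}\bigr|
=\frac{N_T({\sf G}(O_S))}{|{\sf G}(O_S/\mathfrak{a})|}\bigl(1+O(|{\sf G}(O_S/\mathfrak{a})|^{\kappa}\, N_T({\sf G}(O_S))^{-\delta})\bigr)
\]
for suitable $\kappa,\delta>0$ depending only on ${\sf G},S$. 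Summing $\bar y$ over ${\sf Y}^{(\mathfrak{a})}(O_S/\mathfrak{a})$ gives
\[
N_T({\sf Y}(O_S))\ \le\ \frac{|{\sf Y}^{(\mathfrak{a})}(O_S/\mathfrak{a})|}{|{\sf G}(O_S/\mathfrak{a})|}\,N_T({\sf G}(O_S))\,\bigl(1+\text{error}\bigr),
\]
since every integral point of ${\sf Y}$ lies over some point of ${\sf Y}^{(\mathfrak{a})}(O_S/\mathfrak{a})$.

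Next I would quantify the density gain. By the Lang–Weil estimates, for a prime $\mathfrak{p}$ of good reduction the absolutely irreducible variety ${\sf G}$ has $|{\sf G}^{(\mathfrak{p})}(\mathbb{F}_{\mathfrak{p}})|\asymp |\mathbb{F}_{\mathfrak{p}}|^{\dim{\sf G}}$, whereas $|{\sf Y}^{(\mathfrak{p})}(\mathbb{F}_{\mathfrak{p}})|\ll_{\deg{\sf Y}} |\mathbb{F}_{\mathfrak{p}}|^{\dim{\sf Y}}$ with $\dim{\sf Y}\le\dim{\sf G}-1$. Hence, choosing $\mathfrak{a}=\mathfrak{p}$ with norm $q=|\mathbb{F}_{\mathfrak{p}}|$,
\[
\frac{|{\sf Y}^{(\mathfrak{p})}(\mathbb{F}_{\mathfrak{p}})|}{|{\sf G}^{(\mathfrak{p})}(\mathbb{F}_{\mathfrak{p}})|}\ \ll_{\deg{\sf Y}}\ q^{-1}.
\]
Now I would optimise over the modulus: using $N_T({\sf G}(O_S))\asymp T^{a}$ for the known volume exponent $a=a({\sf G},S)>0$, and taking $\mathfrak{p}$ with $q$ a small positive power of $T$ (there are enough primes of good reduction of any given size, by Chebotarev/prime ideal theorem, and the assumption on $S$ and on $\operatorname{Lie}({\sf G})$ guarantees good reduction outside a fixed finite set), the main term becomes $\ll q^{-1}N_T({\sf G}(O_S))=N_T({\sf G}(O_S))^{1-\sigma'}$ for some $\sigma'>0$, while the error term $q^{\kappa}N_T({\sf G}(O_S))^{-\delta}$ stays $o(1)$ provided the chosen power of $T$ is small enough relative to $\delta/\kappa$. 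Balancing these two constraints produces a single $\sigma=\sigma({\sf G},S,\dim{\sf Y})\in(0,1)$, which may be taken uniform over all ${\sf Y}$ of bounded degree and fixed dimension, since the implied constants above depend only on $\deg{\sf Y}$.

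The main obstacle is securing the equidistribution statement \emph{with the error term uniform in the modulus} $\mathfrak{a}$ and in a form strong enough to survive the optimisation — i.e. that the spectral gap for ${\sf G}(O_S)$ acting on ${\sf G}(O_S)/{\sf G}(O_S,\mathfrak{a})$ is uniform over $\mathfrak{a}$ (property $\tau$), and that the counting error in \cite{GN2} degrades only polynomially in $|{\sf G}(O_S/\mathfrak{a})|$. This is precisely the mechanism underlying Theorem \ref{th:lift quant}, so I would extract $\kappa$ and $\delta$ from there. A secondary point to handle carefully is that ${\sf Y}$ is only assumed defined over $F$, not over $O_S$, so one must first enlarge $S$ (harmlessly, since the statement allows $\sigma$ to depend on $S$) or clear denominators so that ${\sf Y}$ has a model over $O_S$ and good reduction at the primes used; and one must check that the Lang–Weil constant for ${\sf Y}^{(\mathfrak{p})}$ is controlled purely by $\deg{\sf Y}$ and $\dim{\sf Y}$, which is standard. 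The remaining steps — summing the equidistribution over the fibre, inserting Lang–Weil, and optimising the modulus — are routine.
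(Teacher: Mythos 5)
Your proposal is correct and follows essentially the same route as the paper's proof of Theorem~\ref{th:upper}: cover ${\sf Y}(O_S)$ by fibres of reduction modulo a well-chosen prime $\mathfrak{p}$, invoke the uniform coset-counting estimate of Theorem~\ref{th:S-arith} on each fibre, bound $|{\sf Y}^{(\mathfrak{p})}(\mathbb{F}_{\mathfrak{p}})|$ via Lang--Weil/degree estimates (after discarding finitely many bad primes, handled in the paper by Noether's theorem on reductions of absolutely irreducible varieties), and optimise the norm $q=|\mathbb{F}_{\mathfrak{p}}|$ against the counting error so that $q^{\dim{\sf G}}\asymp N_T({\sf G}(O_S))^{\theta}$, where $\theta$ is the error exponent. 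The only point to make fully explicit is that strong approximation yields $[\Gamma:\Gamma(\mathfrak{p})]=|{\sf G}^{(\mathfrak{p})}(\mathbb{F}_{\mathfrak{p}})|\gg q^{\dim{\sf G}}$, which is implicit in your use of $|{\sf G}(O_S/\mathfrak{a})|$ as the index.
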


An explicit formula for the exponent $\sigma$ is given in Theorem \ref{th:upper} below, demonstrating 
that $\sigma$ depends only on $\dim {\sf Y}$, and increases monotonically 
with the codimension of $\sf Y$. 

\vspace{0.2cm}

To demonstrate Theorem \ref{th:upper} let us consider the case of 
integral points on subvarieties of the special linear group $\hbox{SL}_n$, $n \ge 2$.

\begin{example} \label{ex:sub}
For every absolutely irreducible proper affine
subvariety $\sf Y$ of $\hbox{SL}_n$ defined over $F$, we have 
\begin{equation}\label{eq:sl}
N_T({\sf Y}(\mathbb{Z}))\ll_{n,\deg({\sf Y}),\epsilon} T^{n^2-n-\frac{n^2-1-\dim ({\sf
      X})}{(n^2+n)2n_e}+\epsilon},
\quad \epsilon>0,
\end{equation}
as $T\to\infty$, where $n_e$ is the least even integer $\ge n-1$.
This improves the trivial estimate $N_T({\sf Y}(\mathbb{Z}))\ll T^{n^2-n}$. 
Details of this computation will be given in Section \ref{sec:sub}.
\end{example}

We note that the assumption of absolute irreducibility is not crucial for the conclusion of Theorem \ref{th:upper0}. Another version of Theorem \ref{th:upper0} which 
can be proved using the argument of \cite[Lem.~4.2]{NS} is as follows. 

\begin{theorem} \label{th:upper00}
With notation as in Theorem \ref{th:upper0}, for every proper
affine subvariety ${\sf Y}$ of ${\sf G}$ defined over $F$, we have
$$
N_T({\sf Y}({O}_S))\ll_{{\sf G},{\sf Y}} N_T({\sf G}({O}_S))^{1-\sigma}\,\,.
$$
\end{theorem}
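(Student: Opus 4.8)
The plan is to deduce the general (not necessarily absolutely irreducible) case from the absolutely irreducible case of Theorem \ref{th:upper0} by a standard Galois-descent argument, as indicated by the reference to \cite[Lem.~4.2]{NS}. First I would reduce to the case where ${\sf Y}$ is $F$-irreducible: a general proper affine subvariety is a finite union of its $F$-irreducible components ${\sf Y}_1,\ldots,{\sf Y}_k$ (with $k$ and the degrees bounded in terms of ${\sf Y}$), and $N_T({\sf Y}({O}_S))\le \sum_i N_T({\sf Y}_i({O}_S))$, so it suffices to bound each $N_T({\sf Y}_i({O}_S))$; note that at this stage $\sigma$ is allowed to depend on ${\sf Y}$, which gives us room that the absolutely irreducible statement did not have.

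Next, given an $F$-irreducible proper subvariety ${\sf Y}$, I would pass to its absolutely irreducible components over a finite Galois extension $F'/F$. The Galois group $\operatorname{Gal}(F'/F)$ permutes these components transitively, so they all have the same dimension $d=\dim {\sf Y}$ and the same degree. Crucially, every $S$-integral point $x\in {\sf Y}({O}_S)$ is an $F$-rational point, hence fixed by $\operatorname{Gal}(F'/F)$, and therefore lies in the intersection ${\sf Z}$ of all the Galois conjugates of any fixed absolutely irreducible component ${\sf Y}^{(1)}$. Thus ${\sf Y}({O}_S)\subset {\sf Z}(O_{S'})$, where $O_{S'}$ is the ring of $S'$-integers in $F'$ for a suitable finite $S'\supset S$. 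If the conjugates of ${\sf Y}^{(1)}$ are not all equal, then ${\sf Z}$ is a proper subvariety of ${\sf Y}^{(1)}$, hence has dimension $<d$ and is contained in some absolutely irreducible proper affine subvariety ${\sf W}$ of ${\sf G}$ of dimension $<d$; applying Theorem \ref{th:upper0} over $F'$ to ${\sf W}$ gives $N_T({\sf Z}(O_{S'}))\ll N_T({\sf G}(O_{S'}))^{1-\sigma'}$, and since $N_T({\sf G}(O_{S'}))\asymp N_T({\sf G}(O_S))$ up to the change of normalization of heights (and up to passing to a larger $S$), we conclude. If instead all the conjugates of ${\sf Y}^{(1)}$ coincide, then ${\sf Y}^{(1)}$ is already defined over $F$ and absolutely irreducible, and we simply apply Theorem \ref{th:upper0} directly.

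The main technical obstacle I anticipate is bookkeeping with the $S$-integral structure under the field extension: one must check that $S$-integral points of ${\sf Y}$ over $F$ are $S'$-integral points of the relevant subvariety over $F'$ for an appropriate finite $S'$, that the comparison of $N_T$ for ${\sf G}(O_{S'})$ and ${\sf G}(O_S)$ only costs a bounded factor (which is harmless since it is absorbed into the implied constant and does not affect the exponent), and that the ambient subvariety ${\sf W}\supset {\sf Z}$ can be chosen of controlled dimension so that Theorem \ref{th:upper0} applies with a uniform $\sigma$ depending only on ${\sf G}$, $S$ and $\dim {\sf W}<\dim {\sf Y}$. None of these steps is deep, but they require care; everything else is a direct appeal to Theorem \ref{th:upper0} together with the elementary fact that $F$-rational points are Galois-invariant.
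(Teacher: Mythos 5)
Your high-level strategy is the right one---reduce to $F$-irreducible components and observe that $F$-rational points must lie in the intersection of the Galois conjugates of any fixed geometric component---but the execution has two genuine gaps, and an unnecessary detour that creates them. First, you assert that the intersection ${\sf Z}=\bigcap_\tau\tau({\sf Y}^{(1)})$, which has dimension $<d$, is contained in an \emph{absolutely irreducible} affine ${\sf W}\subset {\sf G}$ of dimension $<d$. There is no reason for such a ${\sf W}$ to exist: if ${\sf Z}$ is, say, the union of two disjoint lines, no irreducible variety of dimension $<2$ contains it. Second, the assertion that $N_T({\sf G}(O_{S'}))\asymp N_T({\sf G}(O_S))$ after base change to $F'$ and enlargement to $S'$ is false: the growth exponent $\alpha_{S'}({\sf G}_{F'})$ is in general strictly larger than $\alpha_S({\sf G})$, so a bound of the form $N_T({\sf G}(O_{S'}))^{1-\sigma'}$ need not be $\ll N_T({\sf G}(O_S))^{1-\sigma}$ for any $\sigma>0$.

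Both problems disappear if you do not pass to $F'$ at all. The set ${\sf Z}$ is $\operatorname{Gal}(F'/F)$-stable by construction, hence is defined over $F$, and ${\sf Y}_i(O_S)\subset {\sf Z}(O_S)$ already as subsets of ${\sf G}(O_S)$. Thus one should argue by induction on $\dim{\sf Y}$: decompose ${\sf Y}$ into its $F$-irreducible components; each component that is absolutely irreducible is handled by Theorem~\ref{th:upper0}; each component ${\sf Y}_i$ that is not absolutely irreducible satisfies ${\sf Y}_i(O_S)\subset {\sf Z}_i(O_S)$ with ${\sf Z}_i$ a proper affine $F$-subvariety of strictly smaller dimension, and the inductive hypothesis applies (indeed with a \emph{larger} exponent, since $\sigma$ in Theorem~\ref{th:upper} increases with codimension). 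Taking $\sigma$ to be the minimum of the finitely many exponents that arise, and absorbing the number of components into the implied constant (which is permitted since it may depend on ${\sf Y}$), finishes the proof. With this correction your Galois-fixing observation is exactly the right key step and the argument closes cleanly.
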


Let now ${\sf Y}_i$, $1\le i \le k$ be a collection of $k$ hypersurfaces in $\sf G$. Since the number of lattice points  in each hypresurface has lower  rate of growth than the number of lattice points in $\sf G$, the same holds for their union. Thus, the rate of growth of the number of lattice points in the complement 
of these hypersurfaces is the same as the rate of growth of all lattice points. This observation gives rise to host of results asserting that the set of integral points which are generic (i.e. avoid the union of the hypersurfaces) has maximal possible rate of growth.  Let us  illustrate this principle concretely by the following example.

\begin{example}
Denote by $N_T$ the number of  unimodular integral $(n\times n)$-matrices ($n\ge 3$)
of norm bounded by $T$, and by $N_T^\prime$ the number of such matrices satisfying 
\begin{itemize}
\item all the matrix entries are non-zero, 
\item all the principal minors do not vanish, 
\item all the eigenvalues are distinct, 
\item all the singular values (eigenvalues of $A^t A$) are distinct.
\end{itemize}
Then 
$$
N_T^\prime=N_T\cdot \left( 1+ 
O _\epsilon\left(T^{-\frac{1}{2n_e n(n+1)}+\epsilon}\right)\right),\quad \epsilon>0,
$$
where $n_e$ is the least even integer $\ge n-1$.
\end{example}

\subsection{Almost prime points on varieties and orbits}\label{sec:affine0}
We now turn to the question of how often a polynomial map $f:\mathbb{Z}^n\to\mathbb{Z}$
admits prime (or, more realistically, almost prime) values. This problem has long been studied 
using sieve methods (see, for instance, \cite{hr}). Recently substantial progress has been achieved 
in the papers \cite{bgs,ls,NS},  establishing results on the abundance 
of almost prime values for polynomials defined on homogeneous varieties and orbits
of linear groups. The goal of this section is to generalise one of the main results
of \cite{NS} to the setting of symmetric varieties.
 
Let $\sf G$ be a connected  $\mathbb{Q}$-simple simply connected algebraic group isotropic over $\mathbb{Q}$
and ${\sf G}\to \hbox{GL}_n$ a representation of $\sf G$ which is also defined over $\mathbb{Q}$.
Fix $v\in \mathbb{Z}^n$.
We assume that ${\sf X}={\sf G}v$ is Zariski closed, and ${\sf L}=\hbox{Stab}_{\sf G}(v)$ is connected
and has no nontrivial characters. Then the coordinate ring $\mathbb{C}[{\sf X}]$ is a unique factorisation
domain (see Lemma \ref{l:unique} below). 
Let $f$ be a regular function on ${\sf X}$ defined over $\mathbb{Q}$ such that it has a decomposition
into irreducible factors $f=f_1\cdots f_t$ where all $f_i$'s are distinct and defined over
$\mathbb{Q}$.
Let $\cO=\Gamma v$ be the orbit of $\Gamma={\sf G}(\mathbb{Z})$.
We assume that $f$ takes integral values on $\cO$ and is
weakly primitive (that is, $\gcd(f(x):x\in\cO)=1$).
The saturation number $r_0 ( \cO , f )$ of the pair
$(\cO, f )$ is the least $r$ such that the set of $x \in \cO$ for which
$f ( x )$ has at most $r$ prime factors is Zariski dense in $\sf X$, which is
the Zariski closure of $\cO$ by the Borel density theorem.
It is natural to ask whether the saturation number $r_0 ( \cO , f )$ is finite
and establish quantitative estimates on the set 
$\{ x \in \cO: f ( x ) \ \mbox{has at most} \ r \ \mbox{prime factors} \}$.

We fix a norm on $\mathbb{R}^n$ and set $\cO(T)=\{w\in\cO:\, \|w\|\le T\}$.
It was shown in \cite{NS} that when ${\sf X}\simeq{\sf G}$ is a group variety, 
the saturation number is finite and there exists explicit $r\ge 1$
such that 
\begin{equation}\label{eq:primes NS}
| \{ x \in \cO (T ): f ( x ) \ 
\mbox{has at most} \ r \ \mbox{prime factors} \}
| \, \gg \, \frac{|\cO( T )|}{(\log T )^{t(f)}}
\end{equation}
as $T\to\infty$. As remarked in \cite{bgs,NS}, the assumption that ${\sf X}\simeq{\sf G}$ is not crucial
if only finiteness of the saturation number is concerned, and $r_0 ( \cO , f )$ is finite
for general orbits.
However, the effective lower estimate \eqref{eq:primes NS} is much more demanding, and
so far it 
has only been established for 2-dimensional quadratic surfaces \cite{ls} and
for group varieties \cite{NS}. Our goal here is to prove \eqref{eq:primes NS} for general
symmetric varieties.

\begin{theorem}\label{th:primes0}
Let $\cO$ and $f$ be as above and assume in addition 
that ${\sf L}=\hbox{\rm Stab}_{\sf G}(v)$ is symmetric (that is,
$\sf L$ is the set of fixed points of an involution of $\sf G$).
Then there exists $r\ge 1$ such that 
\[
| \{ x \in \cO (T ): f ( x ) \ 
\mbox{has at most} \ r \ \mbox{prime factors} \}
| \, \gg \, \frac{|\cO ( T )|}{(\log T )^{t(f)}}
\]
as $T\to\infty$.
\end{theorem}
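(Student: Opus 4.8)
The plan is to deduce Theorem \ref{th:primes0} from the combinatorial sieve (the Brun--Selberg weighted sieve, as set up in \cite{bgs,NS}), for which one needs two inputs: (i) an asymptotic count with a power-saving error term for $\cO(T)$ and for the congruence subsets $\cO_d(T)=\{x\in\cO(T):\, f(x)\equiv 0\,(\mathrm{mod}\,d)\}$, together with a multiplicative density function $\beta(d)$ controlling $|\cO_d(T)|$, and (ii) the ``level of distribution'' estimate, i.e.\ that the errors can be summed over $d\le T^{\delta}$ for some fixed $\delta>0$. The group-variety case of \cite{NS} obtained these from the effective mean ergodic theorem on $G/\Gamma$ together with a uniform spectral gap over congruence subgroups; the task here is to show the same machinery applies when $\cO=\Gamma v$ sits inside a symmetric variety ${\sf X}={\sf G}/{\sf L}$.

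First I would record the algebraic preliminaries: since ${\sf L}$ is connected with no nontrivial characters, $\CC[{\sf X}]$ is a UFD (Lemma \ref{l:unique}), so the factorisation $f=f_1\cdots f_t$ into distinct $\QQ$-irreducibles makes sense and each $f_i$ cuts out an irreducible hypersurface ${\sf Y}_i\subset {\sf X}$; weak primitivity and integrality of $f$ on $\cO$ are exactly the hypotheses the sieve needs. The counting input comes from \cite{GN2}: the effective solution of the lattice-point counting problem on affine symmetric varieties gives $|\cO(T)|\sim c\,\mathrm{vol}(B_T)$ with a power-saving remainder, and, crucially, this count is \emph{uniform} over the family of congruence lattices $\Gamma(\mathfrak a)$ because the symmetric space $G/\Gamma(\mathfrak a)$ enjoys a uniform spectral gap (property $(\tau)$ for ${\sf G}$ simply connected isotropic, by Clozel and others). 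Applying this to the finite-index subsets of $\cO$ indexed by congruence conditions yields $|\cO_d(T)|=\beta(d)|\cO(T)|+O(|\cO(T)|^{1-\eta}d^{A})$ for fixed $\eta>0$ and $A$; multiplicativity of $\beta(d)$ and the local density estimate $\beta(p)\asymp t/p$ near the $f_i$'s follow from the Lang--Weil bounds applied to the reductions ${\sf X}^{(p)}$ and ${\sf Y}_i^{(p)}$, exactly as in \cite[\S3]{NS}, using that ${\sf X}$ is a homogeneous space of a simply connected group so its reductions are irreducible of the right dimension for almost all $p$.

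With these two inputs the weighted sieve of \cite{bgs} (in the axiomatic form used in \cite[\S4]{NS}) produces, for $r$ chosen in terms of $\delta$, $\dim{\sf X}$ and $t=t(f)$, a lower bound
\[
\bigl|\{x\in\cO(T):\, f(x)\ \text{has}\le r\ \text{prime factors}\}\bigr|\ \gg\ \frac{|\cO(T)|}{(\log T)^{t}},
\]
which is the assertion; the Zariski density statement (finiteness of the saturation number) follows because a subset of $\cO$ of this cardinality cannot lie on any proper subvariety — here one invokes the non-concentration estimate in the spirit of Theorem \ref{th:upper0}, noting that a proper subvariety of ${\sf X}$ carries $\ll |\cO(T)|^{1-\sigma}$ points, which is eventually smaller than $|\cO(T)|/(\log T)^t$. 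The main obstacle I anticipate is input (ii), the level of distribution: unlike the group case, one must control the error in counting $\Gamma(\mathfrak a)v$-points on the symmetric variety uniformly as $\mathfrak a$ grows, which requires not merely a spectral gap but an effective mean ergodic theorem on $G/\Gamma(\mathfrak a)$ with a rate independent of $\mathfrak a$, and then a careful comparison between counting on $\cO$ and counting on $\Gamma$ via the fibration ${\sf G}\to{\sf X}$ (this is where the ``lifting'' and ``restricting'' philosophy of the paper enters) — in particular one has to ensure that the $S$-integral points of the stabiliser ${\sf L}$, and the obstruction to $\Gamma v$ filling up ${\sf X}(O_S)$, do not destroy the multiplicativity or the uniformity of $\beta(d)$. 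The symmetry hypothesis on ${\sf L}$ is used precisely to guarantee the requisite volume asymptotics and spectral estimates on ${\sf X}={\sf G}/{\sf L}$ that are available for symmetric spaces but not for general homogeneous spaces.
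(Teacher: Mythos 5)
Your proposal identifies the right ingredients (the GN2 effective counting with uniform spectral gap, Lemma \ref{l:unique}, Lang--Weil local densities, the combinatorial sieve in the NS axiomatic form), and you correctly flag the central difficulty: comparing counting on the orbit $\cO$ with counting on $\Gamma$ via the fibration ${\sf G}\to{\sf X}$, while preserving multiplicativity of the density function. But your plan, as stated, sieves directly on $\cO$ via $\cO_d(T)$ and a density $\beta(d)$, and this is not how the difficulty is resolved in the paper; in fact sieving on $\cO$ head-on is awkward precisely for the reasons you anticipate (a single $\Gamma$-orbit need not exhaust ${\sf X}(\ZZ)$, so local densities of $\cO$ are not simply Lang--Weil on ${\sf X}^{(p)}$, and multiplicativity becomes delicate).

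The paper instead moves the entire sieve upstairs to $\Gamma\subset{\sf G}$, never forming $\cO_d(T)$. Two devices make this work. First, after verifying via Lemma \ref{l:unique} that $f=f_1\cdots f_t$ in $\CC[{\sf X}]$, one pulls back by the dominant map ${\sf G}\to{\sf X}$ to get $\tilde f=\tilde f_1\cdots\tilde f_t\in\CC[{\sf G}]$ and then shows, using connectedness and triviality of characters of ${\sf L}$, that each $\tilde f_i$ remains irreducible in $\CC[{\sf G}]$ — a step your sketch omits, and without which the sieve dimension $t(f)$ on ${\sf G}$ would be wrong. Second, one sieves not over norm balls but over the Cartan-sector sets $S_T=\{g\in K\dot A^+\Psi:\, \|gv\|\le T\}$, using the effective count of Theorem \ref{th:GN2} for $\gamma_0\Gamma(q)\cap S_T$ uniformly in $\gamma_0$ and $q$. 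Here strong approximation for the simply connected ${\sf G}$ gives the surjectivity mod $d$, so the density is multiplicative exactly as in the group case of \cite{NS}. Only at the end does one descend: by the uniqueness properties of the Cartan decomposition for the symmetric pair, the fibers of $\pi:\Gamma\cap S_T\to\cO(T)$, $\gamma\mapsto\gamma v$, lie in a fixed bounded set $\Psi^{-1}\Psi\cap\Gamma$, so $|\Gamma\cap S_T|\asymp|\cO(T)|$ (via \eqref{eq:vol1}--\eqref{eq:vol3}) and the lower bound transfers from $\Gamma\cap S_T$ to $\cO(T)$.

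So the gap in your proposal is concrete: you would still need to justify the hypotheses ($A_0$)--($A_2$) of the sieve for the quantities $|\cO_d(T)|$ themselves, and this is not straightforward because $\cO$ is a single orbit rather than a union of congruence classes on the variety. The paper's route — lift, sieve on $\Gamma$ over the sector sets $S_T$, then project with bounded fibers — is engineered precisely to avoid that problem, reducing the symmetric-variety case to the already-solved group case. Also, your final remark that Zariski density follows from a non-concentration estimate in the spirit of Theorem \ref{th:upper0} is a separate (and correct) observation, but it plays no role in the proof of the lower bound itself; the paper does not invoke it here.
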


An explicit value of the number $r$ is given in Theorem \ref{th:primes} below.

\vspace{0.2cm}

We illustrate Theorem \ref{th:primes0} by three examples.

\begin{example}\label{ex:prime1}
Let $Q$ be a nondegenerate integral quadratic form in $n$ variables,
which is indefinite over $\mathbb{R}$. Let $v\in \mathbb{Z}^n$, $\Gamma=\hbox{Spin}(Q)(\mathbb{Z})$,
and $\mathcal{O}=\Gamma v$. If we assume that $Q(v)\ne 0$, then the stabiliser of $v$ in
$\hbox{Spin}(Q)$ is a symmetric subgroup of $\hbox{Spin}(Q)$. Moreover, we assume that
$n\ge 4$, which implies that
this stabilizer is connected and has no nontrivial characters.
Then Theorem \ref{th:primes0} applies and \eqref{eq:primes NS} holds.
An explicit estimate for the number $r$ of prime factors is as follows. If $Q$ has signature $(1,n-1)$ over
$\mathbb{R}$, then \eqref{eq:primes NS} holds with $r$ the least integer satisfying 
$$
r> \frac{9(n^2-n+2)(3n^2-3n+2)}{2n-4}\cdot n_e \cdot t(f)\deg(f),
$$
where $n_e$ is the least even integer $\ge 9(n-1)/7$.
On the other hand, if $Q$ has signature $(\lfloor n/2\rfloor ,n-\lfloor n/2\rfloor)$ over
$\mathbb{R}$, then \eqref{eq:primes NS} holds with
$r$ as above where $n_e$ is the least even integer $\ge \lfloor n/2\rfloor$.
We will explain this computations in Section \ref{sec:affine}.
\end{example}

\begin{example}\label{ex:prime2}
Let $A$ be a nondegenerate integral symmetric matrix of dimension $n$.
We say that another matrix $B$ is integrally equivalent of $A$ if there exists
$\gamma\in \hbox{SL}_n(\mathbb{Z})$ such that $B={}^t\gamma A \gamma$, and write
$B\sim_\mathbb{Z} A$. Let
$$
\mathcal{O}=\{B\in\hbox{M}_n(\mathbb{Z}):\, B\sim_\mathbb{Z} A\}.
$$
If $n\ge 3$, then Theorem \ref{th:primes0} implies estimate \eqref{eq:primes NS} with
$$
r>\frac{36 n(3n^2-2)}{n-1}\cdot n_e \cdot t(f)\deg(f),
$$
where $n_e$ is the least even integer $\ge n-1$.
This will be explained in detail in Section \ref{sec:affine}.
\end{example}

\subsection{Linnik-type congruence problems on varieties and orbits}\label{sec:linnik}
Our next aim is to discuss an analogue of Linnik theorem \cite{l1,l2} on the least prime  in an 
arithmetic progression, which states that 
there exists $c,\sigma>0$ such that for every coprime $b,q\in\mathbb{N}$
one can find a prime number $p$ such that
$$
p=b\,\hbox{\rm mod}\, q\quad\hbox{and}\quad p\le c\, q^\sigma.
$$
It is a very challenging goal to establish such a result in the setting of the previous section,
so to keep things more realistic, we settle for the  existence of solutions of polynomially bounded size  which are almost primes.

Let ${\sf G}\subset \hbox{GL}_n$ be a connected  $\mathbb{Q}$-simple simply connected algebraic group defined over $\mathbb{Q}$.
We fix $v\in \mathbb{Z}^n$ and
consider the orbit $\cO=\Gamma v$ of $\Gamma={\sf G}(\mathbb{Z})$.
Let $f:\mathcal{O}\to \mathbb{Z}$ be a polynomial map.
We assume that $f$ is weakly primitive, and
the regular function $\tilde f: {\sf G}\to \mathbb{C}$
defined by $\tilde f(g)=f(gv)$ decomposes as a product of $t$
irreducible factors which are distinct and defined over $\mathbb{Q}$.

\begin{theorem}\label{th:linnik}
There exist $q_0,r,\sigma>0$ 
(as in Theorem \ref{th:lin group} below) such that for every coprime $b,q\in\mathbb{N}$
satisfying $q\ge q_0$ and $b\in f(\mathcal{O})\,\hbox{\rm mod}\, q$, 
one can find $x\in\mathcal{O}$ satisfying 
\begin{enumerate}
\item[(i)] $f(x)$ is a product of at most $r$ prime factors,
\item[(ii)] $f(x)=b\,\hbox{\rm mod}\, q$ and $\|x\|\le q^\sigma$.
\end{enumerate}
\end{theorem}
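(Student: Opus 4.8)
The plan is to combine the effective lifting result for congruences (in the quantitative form of Theorem~\ref{th:lift quant}) with the almost-prime machinery underlying Theorem~\ref{th:primes0}, carried out simultaneously with a congruence constraint imposed along a new auxiliary prime. First I would fix the coprime pair $b,q$ with $q\ge q_0$ and $b\in f(\mathcal{O})\,\hbox{\rm mod}\, q$, choose a base point $\bar x\in{\sf G}^{(q)}(\bZ/q)$ with $\tilde f(\bar x)=b\,\hbox{\rm mod}\, q$, and then count, with the sieve weights attached to $\tilde f=f_1\cdots f_t$, the integral points $x\in{\sf G}(\bZ)$ of height $\le q^\sigma$ lying in the single coset $\pi_q^{-1}(\bar x)$. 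The key input is that the sieve developed in \cite{NS}, which produces \eqref{eq:primes NS} for orbits in group varieties, is \emph{uniform} over congruence subgroups admitting a uniform spectral gap --- this is precisely the property highlighted in the abstract --- so one may run it on the coset $\Gamma(q)$-orbit (or, more precisely, on the sub-progression determined by $\bar x$) rather than on all of $\Gamma$. Theorem~\ref{th:lift quant} guarantees that the main term of the count over this coset is comparable to $|{\sf G}^{(q)}(\bZ/q)|^{-1}$ times the full count $N_{q^\sigma}({\sf G}(\bZ))\asymp q^{\sigma\dim{\sf G}}$, with a power-saving error, provided $\sigma$ is taken large enough relative to $\sigma_0$; since $|{\sf G}^{(q)}(\bZ/q)|\ll q^{\dim{\sf G}}$ as well, the coset still contains $\gg q^{\sigma\dim{\sf G}-\dim{\sf G}}$ integral points once $\sigma>\sigma_0+1$, say, which is a growing quantity in $q$.

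Next I would impose the almost-prime condition. Fix a large auxiliary parameter $z=z(q)$, to be chosen a small power of $q$, and let $P(z)$ be the product of primes below $z$ that are coprime to $q$. Applying the combinatorial sieve (Brun or the weighted sieve, as in \cite{NS,bgs}) to the sequence $\{\tilde f(x): x\in\pi_q^{-1}(\bar x)\cap{\sf G}(\bZ),\ \|x\|\le q^\sigma\}$, using the equidistribution of this sequence modulo $p$ for $p\nmid q$ (which again follows from the uniform spectral gap via the counting estimates of \cite{GN2}, exactly as in \cite{NS}), I get a lower bound $\gg |\{x:\|x\|\le q^\sigma,\ x\in\pi_q^{-1}(\bar x)\}|/(\log q^\sigma)^{t}$ for the number of such $x$ with $\gcd(\tilde f(x),P(z))=1$. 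For those $x$, each irreducible factor $f_i(x)$ has no prime divisor below $z=q^{\delta\sigma}$, and since $|f_i(x)|\ll \|x\|^{\deg f_i}\le q^{\sigma\deg f}$, each $f_i(x)$ has at most $\deg f/\delta$ prime factors, hence $\tilde f(x)=f(x)$ has at most $r$ prime factors with $r$ determined (as in Theorem~\ref{th:lin group}) by $t$, $\deg f$, $\delta$, and the spectral data. Because the sieve lower bound is strictly positive for $q\ge q_0$, at least one such $x$ exists, giving (i) and the first half of (ii); the height bound $\|x\|\le q^\sigma$ is built in, giving (ii).

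The main obstacle, I expect, is bookkeeping the \emph{uniformity in $q$} throughout: the counting estimates of \cite{GN2} and the level-of-distribution estimates needed for the sieve must hold with error terms that are uniform over the congruence subgroup $\Gamma(q)$ and, crucially, strong enough that after dividing by $|{\sf G}^{(q)}(\bZ/q)|\asymp q^{\dim{\sf G}}$ the main term still dominates the accumulated error for all $q\ge q_0$. This forces $\sigma$ to be taken above a threshold $\sigma_0$ that already appears in Theorem~\ref{th:lift quant}, and one must check that the sieve dimension $t(f)$ and the saturation bound do not interact badly with the shrinking density $1/|{\sf G}^{(q)}(\bZ/q)|$ of the target coset --- concretely, that the logarithmic loss $(\log q^\sigma)^{-t}$ is negligible against the polynomial-in-$q$ lower bound coming from Theorem~\ref{th:lift quant}. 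A secondary point is verifying that $b\in f(\mathcal{O})\,\hbox{\rm mod}\, q$ together with weak primitivity of $f$ lets us choose the base point $\bar x$ with $\tilde f(\bar x)\equiv b$ compatibly with the Chinese Remainder decomposition of $q$ and with the auxiliary modulus $P(z)$; this is where the hypothesis that $\tilde f$ splits into distinct $\bQ$-irreducible factors and that $f$ is weakly primitive is used, exactly as in the setup of Theorem~\ref{th:primes0}. Once these uniformity statements are in hand, assembling the proof is a matter of combining Theorem~\ref{th:lift quant} with the weighted sieve essentially verbatim from \cite{NS}.
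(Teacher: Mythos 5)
Your proposal is essentially correct and follows the same strategy as the paper: reduce to the group variety via $\tilde f(g)=f(gv)$ (the paper formalizes this as Theorem~\ref{th:lin group}), run the combinatorial sieve on the coset $\gamma_0\Gamma(q)\cap B_{q^\sigma}$ with sieving restricted to primes coprime to $q$, and verify the sieve axioms via the uniform counting of Theorem~\ref{th:S-arith} in \cite{GN2}. The paper and your sketch both observe that $\gcd(f(x),q)=1$ forces all prime divisors of $f(x)$ to be $>z$, giving the bound on $r$ via $|f(x)|\ll q^{\sigma\deg f}$.

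Two technical points that your sketch glosses over but the paper handles with care, and which are worth flagging. First, $f$ need not have integer coefficients, only integer values on $\mathcal{O}$; the paper writes $f=g/N$ with $g\in\ZZ[x]$ and $\gcd(g(\gamma))=N$ by weak primitivity, then splits $N=N_1N_2$ according to coprimality with $q$ and works with the congruence subgroup $\Gamma(qN_2)$ and the factor maps $\pi_{dN_1}$. Without this bookkeeping the translation between the congruence $f(\gamma)\equiv 0\bmod d$ and a congruence subgroup condition is not clean. Second, verifying the sieve axiom ($A_2$) uniformly in $q$ requires controlling the sum $\sum_{p\mid q}(\log p)/p$; the paper isolates this as Lemma~\ref{l:a} and obtains $O(\log\log q)$, which is exactly what keeps the sieve error $C_2 l\,(\log\log 3X)^{3t+2}/\log X$ under control after substituting $T=q^\sigma$. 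Both are fixable details rather than gaps in the approach, but they are precisely where the ``uniformity in $q$'' you flagged as the main obstacle actually has to be earned.
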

The explicit values of $r$ and $\sigma$ are given in Theorem \ref{th:lin group} below,
and $q_0$ could be computed, in principle, as well.

\vspace{0.2cm}

Coming back to Example \ref{ex:prime1}, we conclude that 
for a polynomial function $f$ on $\hbox{M}_{m\times n}(\mathbb{C})$
satisfying above conditions, the system of equations
$$
{}^t x B x=A,\quad\quad f(x)=b\,\hbox{\rm mod}\, q,\quad\quad \height_\infty(x)\ll q^{\sigma},
$$
has a solution $x\in\hbox{M}_{m\times n}(\mathbb{Z})$
such that $f(x)$ is a product of at most $r$ prime factors,
provided that 
$$
{}^t x B x=A,\quad\quad f(x)=b
$$
has a solution modulo $q$, and $q$ is sufficiently large.
For instance, when $B$ has the signature $(\lfloor m/2\rfloor,m-\lfloor m/2\rfloor)$,
this holds for $\sigma>\sigma_m$ as in (\ref{eq:sigma_m}), and
$$
r>\frac{9\alpha_m\, \sigma}{\alpha_m\, \sigma- m(m-1)/2}\cdot \sigma_m \cdot t(f)\deg(f),
$$
where $\alpha_m=(m-1)^2/4$ for odd $m$ and $\alpha_m=m(m+2)/4$ for even $m$.

\vspace{0.2cm}

We remark that in Theorem \ref{th:linnik} we do not assume that the stabiliser of $v$ in $\sf G$ is symmetric.
Under this assumption, our method implies a result on the number of solutions:
  
\begin{theorem}\label{th:linnik2}
Under the additional assumption that $\hbox{\rm Stab}_{\sf G}(v)$ is symmetric, 
for every $\sigma>\sigma_0$
(as in (\ref{eq:sigma2})), $r$ (as in (\ref{eq:rrr})),
 and coprime $b,q\in\mathbb{N}$ satisfying $b\in f(\mathcal{O})\,\hbox{\rm mod}\, q$,
\begin{align*}
\left|\left\{x\in\mathcal{O}(q^\sigma):\,   
\begin{tabular}{l}
\hbox{$f(x)$ has at most $r$ prime factors}\\
\hbox{$f(x)=b\,\hbox{\rm mod}\, q$}
\end{tabular}
\right\}\right|
\gg_{\sigma} \frac{1}
{|f(\mathcal{O})\,\hbox{\rm mod}\, q|}\cdot 
\frac{|\mathcal{O}(q^\sigma)|}{(\log q)^{t(f)}}
\end{align*}
for sufficiently large $q$.
\end{theorem}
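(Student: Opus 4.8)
The plan is to combine the two main engines of the paper: the quantitative lifting/equidistribution result for solutions of congruences (in the spirit of Theorem \ref{th:lift quant}) and the almost-prime lower bound on symmetric varieties (Theorem \ref{th:primes0}, in its effective form Theorem \ref{th:primes}). Concretely, I would first fix $\sigma>\sigma_0$ and apply the counting machinery from \cite{GN2} together with the uniform spectral gap over the family of congruence subgroups $\Gamma(q)\le\Gamma={\sf G}(\ZZ)$: writing $\mathcal O(T)=\{w\in\mathcal O:\|w\|\le T\}$ and, for $\bar b\in f(\mathcal O)\,\hbox{\rm mod}\,q$, $\mathcal O_{\bar b,q}(T)=\{w\in\mathcal O(T):f(w)=\bar b\,\hbox{\rm mod}\,q\}$, I would establish the equidistribution statement
\[
|\mathcal O_{\bar b,q}(q^\sigma)| = \frac{1}{|f(\mathcal O)\,\hbox{\rm mod}\,q|}\,|\mathcal O(q^\sigma)|\bigl(1+O(q^{-\delta})\bigr)
\]
for some $\delta=\delta(\sigma)>0$, valid once $q$ is large; this is exactly the symmetric-variety analogue of Theorem \ref{th:lift quant} and is where the hypothesis that $\hbox{\rm Stab}_{\sf G}(v)$ is symmetric is used, since it guarantees the relevant mean ergodic theorem and spectral gap for $L^2(G/\Gamma(q))$ with an explicit rate. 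The point is that $\mathcal O_{\bar b,q}(q^\sigma)$ is a genuinely large set: of size $\gg |\mathcal O(q^\sigma)|/q^{\dim{\sf X}}$ up to the multiplicative error above.

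Next I would run the combinatorial sieve of \cite{NS} (as adapted in Section \ref{sec:affine}) \emph{inside} the set $\mathcal O_{\bar b,q}(q^\sigma)$ rather than inside all of $\mathcal O(T)$. The two inputs the sieve requires are: (a) a main term, i.e. for squarefree $d$ coprime to $q$ an asymptotic $|\{w\in\mathcal O_{\bar b,q}(q^\sigma):f(w)=0\,\hbox{\rm mod}\,d\}| = \beta(d)\,|\mathcal O_{\bar b,q}(q^\sigma)| + (\text{error})$ with a multiplicative density $\beta(d)$ of size $O(d^{-1+o(1)})$ satisfying a suitable local-density lower bound, together with a level of distribution $D=q^{\sigma/A}$ for a power $A$ depending only on the representation-theoretic and geometric data; and (b) the non-concentration estimates of Section \ref{sec:sub} — i.e. Theorem \ref{th:upper0}/\ref{th:upper00} applied to the subvarieties $\{f_i=0\}$ — to control the contribution of the $t=t(f)$ irreducible factors and pass from "few prime factors of $f$" to "at most $r$ prime factors" with $r$ as in \eqref{eq:rrr}. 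The level of distribution in (a) comes from the same equidistribution-with-rate statement, now for the double congruence modulo $[d,q]$, so both conditions $f(w)=\bar b\,\hbox{\rm mod}\,q$ and $f(w)=0\,\hbox{\rm mod}\,d$ are handled uniformly by one application of the spectral gap; the key is that $\beta$ remains multiplicative across $d$ and the fixed modulus $q$ only contributes the constant $1/|f(\mathcal O)\,\hbox{\rm mod}\,q|$ to the main term, which is exactly the factor appearing in the conclusion.

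With (a) and (b) in place, the standard lower-bound (Brun/Selberg-type) sieve yields
\[
|\{x\in\mathcal O_{\bar b,q}(q^\sigma): f(x)\text{ has at most }r\text{ prime factors}\}|\;\gg_\sigma\; \frac{\beta(1)\,|\mathcal O_{\bar b,q}(q^\sigma)|}{(\log q^\sigma)^{t(f)}},
\]
and substituting the equidistribution estimate for $|\mathcal O_{\bar b,q}(q^\sigma)|$ and absorbing $(\log q^\sigma)^{t(f)}\asymp_\sigma (\log q)^{t(f)}$ gives precisely the claimed bound, with the implied constant depending on $\sigma$ through the sieve dimension, the error exponent, and the norm. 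I would be careful to choose $r$ large enough (depending on $\sigma$, $\sigma_0$, $t(f)$ and $\deg(f)$, as in \eqref{eq:rrr}) that the sieve produces a positive lower bound — this is where $\sigma>\sigma_0$ enters quantitatively, since a larger $\sigma$ enlarges the level of distribution and shrinks the admissible $r$.

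The main obstacle, I expect, is step (a): proving the equidistribution of $\mathcal O_{\bar b,q}(T)$ with an error that is \emph{uniform in $q$} and simultaneously strong enough to serve as a level of distribution for a sieve in the \emph{same} parameter $q$. One must run the congruence $f(w)=\bar b\,\hbox{\rm mod}\,q$ and the sieving congruences $f(w)=0\,\hbox{\rm mod}\,d$ against the \emph{joint} group $\Gamma([d,q])$, and the effective counting of \cite{GN2} must be applied to these congruence quotients with the spectral gap bound tracked explicitly; ensuring that the resulting error term, of the shape $|\mathcal O(q^\sigma)|\cdot([d,q])^{-\kappa}$ with $\kappa>0$ fixed, beats the trivial size $\beta([d,q])|\mathcal O(q^\sigma)|$ up to $d\le q^{\sigma/A}$ is the crux. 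The symmetry assumption on $\hbox{\rm Stab}_{\sf G}(v)$ is precisely what makes this feasible, via the structure of $L^2(G/\Gamma(q))$ for symmetric quotients; without it one only gets the finiteness of the saturation number but not the effective lower bound, which is why Theorem \ref{th:linnik} (no symmetry) is stated without the counting refinement and Theorem \ref{th:linnik2} (with symmetry) carries it.
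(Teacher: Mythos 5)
Your proposal captures the right ingredients — spectral gap over congruence subgroups, a combinatorial sieve, and careful treatment of the joint modulus $[d,q]$ — but it diverges from the paper's actual route and contains one step that is misattributed and another that is underdeveloped.

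The paper proves Theorem~\ref{th:linnik2} by \emph{not} sieving on the orbit directly. Instead it lifts everything to the group: one works with the function $\tilde f(\gamma)=f(\gamma v)$ on $\Gamma={\sf G}(\ZZ)$ and with the H\"older well-rounded sets $S_T$ built from the Cartan decomposition $G=KA^+L$ (Section~\ref{sec:affine}), fixes a coset representative $\gamma_0$ with $f(\gamma_0 v)=b\,\hbox{mod}\,q$, runs the sieve of \cite{NS} on $\gamma_0\Gamma_q\cap S_T$ exactly as in the proof of Theorem~\ref{th:lin group} (with Theorem~\ref{th:GN2} supplying the coset counts $|\delta\Gamma_q(dN_1)\cap S_T|$), then sums over $\gamma_0\in\Gamma/\Gamma_q$ with $f(\gamma_0 v)=b\,\hbox{mod}\,q$ to produce the factor $1/|f(\mathcal O)\,\hbox{mod}\,q|$, and only at the very end pushes down to $\cO(T)$ via the map $\gamma\mapsto\gamma v$, whose fibers are uniformly bounded (via the Cartan decomposition uniqueness). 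Your proposal inverts this order: you want first an orbit-level equidistribution asymptotic for $\mathcal O_{\bar b,q}(q^\sigma)$ and then a sieve run "inside $\mathcal O_{\bar b,q}(q^\sigma)$". That inversion creates a real difficulty you do not address: the multiplicativity of the sieve density and the verification of the sieve axioms $(A_0)$--$(A_2)$ are carried out in the paper at the level of $\Gamma$ and the congruence subgroups $\Gamma(dN_1)$, where the group structure, strong approximation, and the UFD property of $\CC[{\sf G}]$ (Lemma~\ref{l:unique}) make everything transparent; at the orbit level these statements would need to be re-derived, and the orbit reductions modulo $d$ need not behave as cleanly. Also, the orbit-level asymptotic you posit is stronger than what the paper establishes: the fiber analysis only gives a uniform upper bound on fiber cardinality, so the paper proves (and uses) only the lower bound of Theorem~\ref{th:lift quant sym}, not an asymptotic with error $O(q^{-\delta})$.

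Separately, your invocation of Theorem~\ref{th:upper0}/\ref{th:upper00} to "pass from few prime factors to at most $r$ prime factors" is not what does the work, and it is unclear how it would. In the paper this passage is a purely arithmetic divisor count: if $\gcd(\tilde f(\gamma),P_z)=1$ with $z\gg q^{(\alpha\sigma-\dim{\sf G})\tau/s}$ and $|\tilde f(\gamma)|\ll T^{\deg f}=q^{\sigma\deg f}$, then the number of prime factors of $\tilde f(\gamma)$ is at most $\sigma\deg(f)/((\alpha\sigma-\dim{\sf G})\tau/s)$, which together with $\tau<\tau_0$ and $s>9t(f)$ gives the bound \eqref{eq:rrr}. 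No subvariety non-concentration is needed; the only geometric input to the sieve at this stage is the Lang--Weil bound $|{\sf G}(\ZZ/(dN_1))\cap\{g=0\}|\ll d^{\dim{\sf G}-1}$ used in controlling $\sum_d|R_d|$. You should replace the appeal to Theorem~\ref{th:upper0} with this divisor count and run the sieve on the lattice side, then push down.
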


\subsection{The fundamental lattice point counting result}\label{sec:fundamental}

We now state the uniform solution given in \cite[Th.~5.1]{GN2}  to the lattice point counting problem, which underlies the results in the present paper.  
Let ${\sf G}$ be a connected $F$-simple simply connected algebraic group
defined over a number field $F$, $S$ a finite subset of $V_F$, and $O_S$ the ring of $S$-integers.
Let $G =\prod_{v\in S} {\sf G}(F_v)$, $\Gamma ={\sf G}(O_S)$, and 
$$\Gamma(\mathfrak{a}) =\{\gamma\in \Gamma:\, \gamma=I\,\hbox{\rm mod}\, \mathfrak{a}\}\quad
\hbox{for an ideal $\mathfrak{a}$ of $O_S$}\,.$$
We shall use the following notation throughout the paper :
\begin{align*}
p_S&=\hbox{the least number such that all $L^2_0(G/\Gamma(\mathfrak{a}))$ are $L^{p_S^+}$-integrable (\cite[Def.~5.2]{GN1}),}\\
n_e(p) &=
\hbox{the least even integer $\ge p/2$, if $p>2$, and $1$, if $p=2$,}\\
d_S&=\sum_{v\in V_\infty} \dim {\sf G}(F_v),\\
B_T&=\{g\in G:\, \height(g)\le T\},\\
a_S&=\hbox{the H\"older exponent of the family of sets $B_{e^t}$
(see \cite[Def. 3.12]{GN1}).}
\end{align*}
We note that the finiteness of $p_S$ is a manifestation of 
property $(\tau)$, established in full generality by Clozel \cite{cl}.  H\"older-admissibility of the 
sets $B_{e^t}$ was established in 
\cite[Th.~7.19]{GN1} and \cite{bo}. In many cases, one can take $a_S=1$ (see \cite[Ch.~7]{GN1}).
For instance, this is the case when $F=\mathbb{Q}$ and $S=\{\infty\}$.

We can now state :


\begin{theorem}[\cite{GN2}]\label{th:S-arith}
For every $\gamma_0\in\Gamma$ and
all ideals $\mathfrak{a}$ of $O_S$, 
\begin{align*}
|\gamma_0\Gamma(\mathfrak{a})\cap B_T|&=\frac{\vol(B_T)}{[\Gamma:\Gamma(\mathfrak{a})]}+O_\epsilon\left(\vol(B_T)^{1-(2n_e(p_S))^{-1}
      a_S/(a_S+d_S)+\epsilon}\right),\quad \epsilon>0,
\end{align*}
where the Haar measure on $G$ is normalised so that $\vol(G/\Gamma)=1$.
\end{theorem}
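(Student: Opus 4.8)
The plan is to derive Theorem~\ref{th:S-arith} as a quantitative consequence of the mean ergodic theorem for the action of $G$ on $L^2_0(G/\Gamma(\mathfrak{a}))$, applied uniformly over the congruence subgroups $\Gamma(\mathfrak{a})$. First I would fix an ideal $\mathfrak{a}$ and write $Y_\mathfrak{a}=G/\Gamma(\mathfrak{a})$ equipped with its $G$-invariant probability measure. The point count $|\gamma_0\Gamma(\mathfrak{a})\cap B_T|$ is obtained by testing the averaging operator $\pi_{Y_\mathfrak{a}}(\beta_T)$ against suitable functions, where $\beta_T=\vol(B_T)^{-1}\mathbf 1_{B_T}$ and $\pi_{Y_\mathfrak{a}}$ is the Koopman representation on $L^2(Y_\mathfrak{a})$: concretely, for $\phi$ supported on a small identity neighbourhood $\mathcal U$ lifted to $Y_\mathfrak{a}$, the inner product $\inn{\pi_{Y_\mathfrak{a}}(\beta_T)\phi,\psi}$ picks out exactly the $\Gamma(\mathfrak{a})$-translates of $\gamma_0$ landing in a slightly fattened/thinned copy of $B_T$. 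Splitting $\phi=\int_{Y_\mathfrak{a}}\phi\,dy+\phi_0$ with $\phi_0\in L^2_0(Y_\mathfrak{a})$, the constant part produces the main term $\vol(B_T)/[\Gamma:\Gamma(\mathfrak{a})]$, and the error is controlled by $\norm{\pi_{Y_\mathfrak{a}}^0(\beta_T)}_{L^2_0\to L^2_0}$.

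The second step is to bound this operator norm. Here I would invoke the spectral transfer principle: since all $L^2_0(G/\Gamma(\mathfrak{a}))$ are $L^{p_S^+}$-integrable — this is precisely the uniform $(\tau)$ / property-$(\tau)$ input of Clozel \cite{cl}, encoded in the finiteness of $p_S$ — every matrix coefficient of $\pi_{Y_\mathfrak{a}}^0$ is dominated, after raising to the power $n_e(p_S)$, by a matrix coefficient of the regular representation of $G$. Combined with the known decay of $\vol(B_T)$-normalised convolution norms and the Hölder-admissibility of the family $B_{e^t}$ (with exponent $a_S$, see \cite[Th.~7.19]{GN1}, \cite{bo}), one gets $\norm{\pi_{Y_\mathfrak{a}}^0(\beta_T)}\ll_\epsilon \vol(B_T)^{-(2n_e(p_S))^{-1}a_S/(a_S+d_S)+\epsilon}$, \emph{uniformly in $\mathfrak{a}$}; the exponent $a_S/(a_S+d_S)$ reflects the interpolation between the sup-norm and $L^2$-norm of $\beta_T$ against the boundary regularity of $B_T$, with $d_S$ the dimension of the Archimedean part over which the volume concentrates.

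The third step is the standard "sandwich" argument to convert the operator-norm estimate into a genuine lattice-point count: choosing the bump $\phi=\phi_\mathcal U$ of width $\mathcal U$ and using $B_{T}\mathcal U^{-1}\subset$ (relevant set) $\subset B_T\mathcal U$ together with the Hölder bound $\vol(B_{T}\mathcal U\setminus B_T\mathcal U^{-1})\ll \vol(B_T)\cdot(\diam\,\mathcal U)^{a_S}$, and then optimising the size of $\mathcal U$ against the operator-norm error, yields the stated error term $O_\epsilon(\vol(B_T)^{1-(2n_e(p_S))^{-1}a_S/(a_S+d_S)+\epsilon})$. Summing over a bounded number of translates to pass from $\gamma_0=I$ to general $\gamma_0\in\Gamma$ costs nothing.

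The main obstacle, and the crux of \cite{GN2} that I would be relying on rather than reproving, is the uniformity over all ideals $\mathfrak{a}$: one must know that the spectral gap — equivalently the integrability exponent $p_S$ — does \emph{not} deteriorate as $[\Gamma:\Gamma(\mathfrak{a})]\to\infty$, so that the same $p_S$, $n_e(p_S)$, $a_S$, $d_S$ serve for every congruence level. This is exactly property $(\tau)$ for the congruence family, and once it is in hand the remaining steps are quantitative harmonic analysis of the type carried out in \cite{GN1}. A secondary technical point is that $\beta_T$ is not a nice approximate identity and the sets $B_T$ are not bi-$K$-invariant in general, so the reduction to the regular representation must be done through the $n_e(p_S)$-fold tensor-power trick rather than a direct Harish-Chandra estimate; this is what forces the appearance of $n_e(p_S)$ in the exponent.
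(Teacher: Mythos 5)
Your sketch is a faithful reconstruction of the proof in \cite{GN2}, which the present paper simply imports by citation (it does not reprove Theorem~\ref{th:S-arith}): the chain of ideas — mean ergodic theorem on $L^2_0(G/\Gamma(\mathfrak a))$, uniform spectral gap from property $(\tau)$ feeding a tensor-power transfer to bound $\|\pi^0_{Y_{\mathfrak a}}(\beta_T)\|$, H\"older-admissibility of $B_{e^t}$, and the sandwich/bump-optimisation step producing the exponent $a_S/(a_S+d_S)$ divided by $2n_e(p_S)$ — is exactly the method of \cite{GN1,GN2}. You have also correctly isolated the two genuinely delicate points, namely uniformity of $p_S$ over all congruence levels and the need for the $n_e(p_S)$-fold tensor trick in the absence of bi-$K$-invariance, so there is nothing to add.
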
 

We set
\begin{align}\label{eq:alpha}
\alpha_S(G) &=\limsup_{T\to\infty} \frac{\log |\{\gamma\in \Gamma: \height(\gamma)\le T\}|}{\log
  T}=\limsup_{T\to\infty} \frac{\log \vol(B_T)}{\log
  T}.
\end{align}
We note that $\alpha_S(G) >0$ provided that $\sf G$ is isotropic over $S$ (see \cite[Sec.~7]{gw},
\cite{mau}, \cite[Sec.~6]{gos}). 

We will also have occasion below to consider the volume growth in a homogeneous space $G/H$, in which case we will denote the exponent by $\alpha({G/H})$. 

Although the asymptotics of $|\{\gamma\in \Gamma: \height(\gamma)\le T\}|$ is also
known, this will not be needed in our argument.

\subsection*{Acknowledgement.}
We would like to express our gratitude to Peter Sarnak
who was involved in this project at its initial stage and greatly contributed to it. 
It is a pleasure to thank Peter for generously sharing his ideas with us.

\section{Effective lifting of solutions of congruences}\label{sec:lifting}

We first establish a version of Theorem \ref{th:lift varieties} in the case of group varieties,
and  Theorem \ref{th:lift varieties} will be deduced from the following result.

\begin{theorem}\label{th:lifting group}
Let ${\sf G}\subset \hbox{\rm GL}_m$ be a connected simply connected $F$-simple algebraic group, 
and let $S$ be a finite subset of $V_F$, containing
all Archimedean absolute values, such that  
$\sf G$ is isotropic over $S$ and $\hbox{\rm Lie}({\sf G})\cap \hbox{\rm M}_m(O_S)$
has a basis over $O_S$ as an $O_S$-module.
Let 
\begin{equation}\label{eq:sigma_0}
\sigma>\sigma_0:=\alpha_S(G)^{-1} \dim({\sf G}) \frac{a_S+d_S}{a_S} 2n_e(p_S).
\end{equation}
Then  there exists $q_0>0$ such that
for every ideal $\mathfrak{a}$ of ${O}_S$ satisfying $|O_S/\mathfrak{a}|\ge q_0$ and
every $\bar x \in {\sf G}^{(\mathfrak{a})}(O_S/\mathfrak{a})$, there exists 
$x\in {\sf G}(O_S)$ such that 
\begin{equation}\label{eq:eeee}
 \pi_{\mathfrak{a}}(x)=\bar x \quad\hbox{and}\quad \height(x)\le |O_S/\mathfrak{a}|^\sigma.
\end{equation}
\end{theorem}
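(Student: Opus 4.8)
The plan is to deduce Theorem~\ref{th:lifting group} directly from the uniform lattice point count of Theorem~\ref{th:S-arith}. Fix an ideal $\mathfrak{a}$ of $O_S$, set $q=|O_S/\mathfrak{a}|$, and let $\bar x\in {\sf G}^{(\mathfrak{a})}(O_S/\mathfrak{a})$. By strong approximation (valid since $\sf G$ is simply connected, $F$-simple and isotropic over $S$), the reduction map $\pi_{\mathfrak{a}}:{\sf G}(O_S)\to{\sf G}(O_S/\mathfrak{a})$ is surjective, so there exists $\gamma_0\in\Gamma={\sf G}(O_S)$ with $\pi_{\mathfrak{a}}(\gamma_0)=\bar x$. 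The fibre $\pi_{\mathfrak{a}}^{-1}(\bar x)$ is precisely the coset $\gamma_0\Gamma(\mathfrak{a})$, so the number of lifts of $\bar x$ of height at most $T$ is $|\gamma_0\Gamma(\mathfrak{a})\cap B_T|$. By Theorem~\ref{th:S-arith},
\begin{align*}
|\gamma_0\Gamma(\mathfrak{a})\cap B_T| = \frac{\vol(B_T)}{[\Gamma:\Gamma(\mathfrak{a})]} + O_\epsilon\!\left(\vol(B_T)^{1-(2n_e(p_S))^{-1} a_S/(a_S+d_S)+\epsilon}\right).
\end{align*}
Thus a lift exists as soon as the main term strictly dominates the error term.

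The next step is a purely quantitative comparison of the two terms. We have $[\Gamma:\Gamma(\mathfrak{a})]=|{\sf G}^{(\mathfrak{a})}(O_S/\mathfrak{a})|$, which is polynomial in $q$ of degree $\dim({\sf G})$; more precisely it is $\ll q^{\dim({\sf G})}$ (for the purpose of an upper bound on the index this crude estimate suffices, using that ${\sf G}^{(\mathfrak{a})}(O_S/\mathfrak{a})$ embeds into a group of matrices over $O_S/\mathfrak{a}$). On the other hand, by the definition \eqref{eq:alpha} of $\alpha_S(G)$, for any $\eta>0$ we have $\vol(B_T)\gg_\eta T^{\alpha_S(G)-\eta}$ for $T$ large. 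Writing $T=q^\sigma$, the main term is then $\gg_\eta q^{\sigma(\alpha_S(G)-\eta)}/q^{\dim({\sf G})}$, while the error term is $\ll_\epsilon (q^{\sigma})^{(\alpha_S(G)+\eta)(1-(2n_e(p_S))^{-1} a_S/(a_S+d_S)+\epsilon)}$ (using the trivial upper bound $\vol(B_T)\ll T^{\alpha_S(G)+\eta}$). The main term beats the error precisely when
\begin{align*}
\sigma(\alpha_S(G)-\eta) - \dim({\sf G}) > \sigma(\alpha_S(G)+\eta)\Bigl(1-\tfrac{a_S}{2n_e(p_S)(a_S+d_S)}+\epsilon\Bigr),
\end{align*}
and letting $\eta,\epsilon\to 0$ this reduces to $\sigma\, \alpha_S(G)\cdot \frac{a_S}{2n_e(p_S)(a_S+d_S)} > \dim({\sf G})$, i.e.\ exactly $\sigma>\sigma_0$ with $\sigma_0$ as in \eqref{eq:sigma_0}. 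So for $\sigma>\sigma_0$ and $q$ large enough (this fixes $q_0$, and $\eta,\epsilon$ are chosen small depending on how far $\sigma$ exceeds $\sigma_0$), the count $|\gamma_0\Gamma(\mathfrak{a})\cap B_{q^\sigma}|$ is positive, hence a lift $x$ with $\height(x)\le q^\sigma$ exists. This proves \eqref{eq:eeee}.

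The main technical point to get right is the uniformity and the direction of the inequalities in the volume estimates: the error term in Theorem~\ref{th:S-arith} is phrased in terms of $\vol(B_T)$, so we must bound $\vol(B_T)$ from above by a power of $T$ while simultaneously bounding the main term from below, and the two $\eta$'s must be reconciled; the cleanest way is to note that $\limsup$ in \eqref{eq:alpha} gives $\vol(B_T)\le T^{\alpha_S(G)+\eta}$ for all large $T$, and — since $\vol(B_T)\to\infty$ — also $\vol(B_T)\ge T^{\alpha_S(G)-\eta}$ along a sequence, which is in fact enough because we only need the existence of one good $T\le q^\sigma$ (one may even run the argument at $T=q^\sigma$ directly after checking that the relevant inequality is open in $\sigma$). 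A secondary point is to record explicitly that $[\Gamma:\Gamma(\mathfrak{a})]=|{\sf G}^{(\mathfrak{a})}(O_S/\mathfrak{a})|$ and that this is $\ll q^{\dim {\sf G}}$, which uses the hypothesis on the $O_S$-module structure of $\hbox{Lie}({\sf G})\cap\hbox{M}_m(O_S)$ to identify $\Gamma(\mathfrak{a})$ correctly as a congruence subgroup and to control the index via the strong approximation/reduction theory; this is where the assumption on $S$ enters. Everything else is bookkeeping.
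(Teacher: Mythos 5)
Your proof follows the paper's argument essentially step for step: strong approximation to pick a representative $\gamma_0$ of the fibre, Theorem~\ref{th:S-arith} for the coset count, a lower bound $\vol(B_T)\gg T^{\alpha_S-\eta}$, an upper bound $[\Gamma:\Gamma(\mathfrak{a})]\ll q^{\dim{\sf G}}$, and then solving for the threshold $\sigma_0$. The one place where your proposal is genuinely thinner than the paper is the index bound: embedding ${\sf G}^{(\mathfrak{a})}(O_S/\mathfrak{a})$ into $\mathrm{GL}_m(O_S/\mathfrak{a})$ only gives $\ll q^{m^2}$, not $\ll q^{\dim{\sf G}}$; the paper's Lemma~\ref{l:upper0} gets the correct exponent by combining local Tamagawa measure computations with the Lang--Weil estimates and the Chinese remainder theorem, and this is precisely where the $O_S$-basis hypothesis on $\mathrm{Lie}({\sf G})\cap\mathrm{M}_m(O_S)$ is used. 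You correctly flag this as the place the hypothesis on $S$ enters, so the structure of your argument is right, but the justification for that bound needs to be spelled out rather than asserted.

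One further minor point: your worry about $\limsup$ only giving a lower bound along a subsequence is well taken in principle, but your proposed workaround (``one good $T\le q^\sigma$ suffices'') does not by itself resolve it, since a priori there could be gaps in the subsequence so that no good $T$ lands below $q^\sigma$. The paper's statement that $\vol(B_T)\ge T^\alpha$ for all $T>T(\alpha)$ is backed by the known asymptotic $\vol(B_T)\sim cT^{\alpha_S}(\log T)^\beta$ (so the $\limsup$ is in fact a limit), and it is cleanest to invoke that directly rather than work around it. The paper's version of the comparison is also slightly cleaner: it only needs the single inequality $\vol(B_T)>(c_\delta[\Gamma:\Gamma(\mathfrak{a})])^{1/\delta}$, with no separate upper bound on $\vol(B_T)$.
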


We note that the exponent $\sigma$ can be further improved, for instance, by considering
a smooth density on the sets $\{\height\le T\}$, and when $p_S=2$, this leads
to essentially optimal bound on $\sigma$. However, we do not pursue this direction
in the present papers and rely only on the counting estimate of Theorem \ref{th:S-arith}.

\begin{proof}
Since $\sf G$ is isotropic over $S$, it satisfies the strong approximation property with respect to $S$
(see \cite[\S7.4]{PR}). Then
 it follows that the map $\pi_{\mathfrak{a}}$ is surjective,
and there exists $\gamma_0\in \Gamma$ such that $\bar x=\pi_{\mathfrak{a}}(\gamma_0)$ for some $\gamma_0\in
\Gamma$. Moreover, we have $\bar x=\pi_{\mathfrak{a}}(\gamma_0\Gamma(\mathfrak{a}))$.

By Theorem \ref{th:S-arith}, for every $\delta< \delta_0=(2n_e(p_S))^{-1}a_S/(a_S+d_S)$ and $c_\delta>0$, we have
\begin{equation}\label{eq:lift}
\left| |\gamma_0\Gamma(\mathfrak{a})\cap
  B_T|-\frac{\vol(B_T)}{|\Gamma:\Gamma(\mathfrak{a})|}\right|\le c_\delta\,\vol(B_T)^{1-\delta}.
\end{equation}
It is important to emphasize here that this estimate is uniform over all $\gamma_0\in \Gamma$
and all ideals $\mathfrak{a}$ of ${O}_S$. 
It follows from (\ref{eq:lift}) that for $T$ satisfying
\begin{align}\label{eq:lower}
\vol(B_T)> (c_\delta\,|\Gamma:\Gamma(\mathfrak{a})|)^{1/\delta},
\end{align}
there exists $x\in \gamma_0\Gamma(\mathfrak{a})\cap B_T$.
Then we have $\pi_{\mathfrak{a}}(x)=\bar x$ and $\height(x)\le T$.

Now it remains to analyse for which values of $T$ inequality (\ref{eq:lower}) holds.
By Lemma \ref{l:upper0} below,
\begin{equation}\label{eq:111}
|\Gamma:\Gamma(\mathfrak{a})|\ll |O_S/\mathfrak{a}|^{\dim ({\sf G})}.
\end{equation}
By \eqref{eq:alpha}, for every $\alpha<\alpha_S$ and $T>T(\alpha)$, 
\begin{equation}\label{eq:222}
\vol(B_T)\ge T^{\alpha}.
\end{equation}
Therefore, we conclude that (\ref{eq:lower}) holds for $T= |O_S/\mathfrak{a}|^\sigma$
with $\sigma>{\dim ({\sf
    G})/(\alpha\delta)}$ and sufficiently large $|O_S/\mathfrak{a}|$.
Since this is the case for every $\alpha<\alpha_S(G)$
and $\delta<\delta_0$, this concludes the proof.
\end{proof}

To complete the proof of Theorem \ref{th:lifting group}, we therefore only have to establish the following  
\begin{lemma}\label{l:upper0}
Let ${\sf G}\subset {\rm GL}_n$ be a connected algebraic group,
and let $S$ be a finite subset of $V_F$,
containing all Archemedean absolute values, such that  
$\hbox{\rm Lie}({\sf G})\cap \hbox{\rm M}_m(O_S)$ has a basis over $O_S$.
Then 
$$
|\Gamma:\Gamma(\mathfrak{a})|\asymp |O_S/\mathfrak{a}|^{\dim ({\sf G})}
$$
uniformly over ideals $\mathfrak{a}$ of ${O}_S$.
\end{lemma}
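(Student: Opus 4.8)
The plan is to reduce the global statement about $|\Gamma : \Gamma(\mathfrak{a})|$ to a product of local statements and then count points in a reduction mod $\mathfrak{a}$ of an affine scheme over $O_S$. First I would observe that by the Chinese Remainder Theorem, if $\mathfrak{a} = \prod_i \mathfrak{p}_i^{e_i}$ then $O_S/\mathfrak{a} \cong \prod_i O_S/\mathfrak{p}_i^{e_i}$ and $|O_S/\mathfrak{a}| = \prod_i |O_S/\mathfrak{p}_i^{e_i}|$, so it suffices to prove the asymptotic $|\Gamma : \Gamma(\mathfrak{p}^e)| \asymp |O_S/\mathfrak{p}^e|^{\dim({\sf G})}$ with constants independent of $\mathfrak{p}$ and $e$, and then multiply. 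Here I would use that the strong approximation / congruence setup gives $\Gamma/\Gamma(\mathfrak{a}) \hookrightarrow {\sf G}(O_S/\mathfrak{a})$, and in fact for all but finitely many $\mathfrak{p}$ (the primes of good reduction, which we may absorb into $S$) this is onto, so the problem becomes: count $|{\sf G}(O_S/\mathfrak{p}^e)|$.

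The key input is a smoothness/Hensel-type argument. Since $\hbox{Lie}({\sf G}) \cap \hbox{M}_m(O_S)$ has an $O_S$-basis, the group scheme ${\sf G}$ spreads out to a smooth affine group scheme $\mathcal{G}$ over $O_S[1/N]$ for a suitable $N$, again absorbed into $S$; then $\mathcal{G}$ has relative dimension $\dim({\sf G})$ over $O_S$. For a smooth scheme of relative dimension $\delta = \dim({\sf G})$ over a finite local ring $O_S/\mathfrak{p}^e$ with residue field of size $\kappa = |O_S/\mathfrak{p}|$, the standard Hensel-lifting argument shows $|\mathcal{G}(O_S/\mathfrak{p}^e)| = \kappa^{(e-1)\delta}\,|\mathcal{G}(O_S/\mathfrak{p})|$, and the Lang–Weil / elementary estimate for smooth varieties over a finite field gives $|\mathcal{G}(O_S/\mathfrak{p})| \asymp \kappa^{\delta}$ uniformly (for connected $\mathcal{G}$ the count is $\kappa^\delta(1 + O(\kappa^{-1/2}))$, bounded above and below by absolute constants times $\kappa^\delta$). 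Combining, $|\mathcal{G}(O_S/\mathfrak{p}^e)| \asymp \kappa^{e\delta} = |O_S/\mathfrak{p}^e|^{\delta}$. For the finitely many excluded primes $\mathfrak{p}_1,\dots,\mathfrak{p}_k$ one handles each separately: the local count $|{\sf G}(O_{S,\mathfrak{p}}/\mathfrak{p}^e)|$ is still $\asymp \kappa^{e\delta}$ by a direct argument at each fixed place (the group scheme has a model there too, possibly non-smooth, but the point count of its reduction is still $\Theta(\kappa^{e\delta})$ as $e\to\infty$ by a lattice-volume computation on $\hbox{Lie}({\sf G})$), and there are only finitely many of them so they only affect the implied constants.

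I would then assemble: writing $\mathfrak{a} = \prod_i \mathfrak{p}_i^{e_i}$,
\begin{equation*}
|\Gamma : \Gamma(\mathfrak{a})| \asymp \prod_i |{\sf G}(O_S/\mathfrak{p}_i^{e_i})| \asymp \prod_i |O_S/\mathfrak{p}_i^{e_i}|^{\dim({\sf G})} = |O_S/\mathfrak{a}|^{\dim({\sf G})},
\end{equation*}
where the first $\asymp$ uses surjectivity of reduction at good primes together with the bounded-index correction at the finitely many bad primes, and multiplicativity is legitimate because the per-prime implied constants are $1$ for all but finitely many primes. The lower bound on $|\Gamma : \Gamma(\mathfrak{a})|$ requires knowing $\Gamma \to {\sf G}(O_S/\mathfrak{a})$ has image of index bounded independently of $\mathfrak{a}$, which is again strong approximation at the good primes plus finiteness of the bad set.

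The main obstacle I expect is making the uniformity genuinely uniform over \emph{all} ideals, in particular the interaction between the finitely many bad primes and the multiplicative structure: one must be careful that the ``correction factor'' from $\pi_{\mathfrak{a}}$ not being surjective (or $\Gamma(\mathfrak{a})$ not being the full congruence subgroup at bad primes) is bounded by a constant depending only on ${\sf G}$ and $S$, not on $\mathfrak{a}$. This is exactly where one uses that the set of bad primes is finite and that the local index at each is bounded as the power $e\to\infty$; once that is isolated as a lemma, the rest is the routine Hensel-lifting and finite-field point-count bookkeeping sketched above, which I would not grind through in detail.
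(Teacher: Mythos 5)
Your approach is essentially the same as the paper's: reduce to prime-power moduli via CRT, obtain the per-prime estimate $|{\sf G}(O_S/\mathfrak{p}^e)|\asymp |O_S/\mathfrak{p}^e|^{\dim {\sf G}}$ from smoothness and Lang--Weil, and identify $\Gamma(\mathfrak{a})$ as the kernel of the reduction map (the paper phrases the Hensel-lifting step by citing local Tamagawa-measure formulas, but the content is identical). Two small remarks: the digression about absorbing bad primes into $S$ and treating a finite exceptional set separately is unnecessary here, since the hypothesis that $\hbox{Lie}({\sf G})\cap\hbox{M}_m(O_S)$ has an $O_S$-basis is precisely what guarantees a smooth model at every $v\notin S$; on the other hand, you are more explicit than the paper about the lower-bound direction (bounding the index of the image of $\Gamma$ in ${\sf G}(O_S/\mathfrak{a})$ via strong approximation), which the paper's one-line conclusion glosses over and which in fact only the upper bound is used downstream.
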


\begin{proof}
Let ${O}_v$ denote the local ring with the prime ideal $\mathfrak{p}_v$
corresponding to non-Archemedean $v\in V_F$.
It follows from the formulas for local Tamagawa measures
(see, for instance, \cite[14.2-14.3]{vosk}) and the Lang--Weil estimates \cite{lw}
that for all valuations $v$ outside $S$ and all $n\ge 0$,
\begin{equation*}
|{\sf G}^{(\mathfrak{p}_v^n)}({O}_v/\mathfrak{p}_v^n)|\asymp |{O}_v/\mathfrak{p}_v|^{n\dim({\sf
    G})}=|{O}_v/\mathfrak{p}_v^n|^{\dim({\sf G})},
\end{equation*}
and it follows the Chinese reminder theorem that for every ideal $\mathfrak{a}$ of 
${O}_S$, 
$$
|{\sf G}^{(\mathfrak{a})}({O}_S/\mathfrak{a})|\asymp |{O}_S/\mathfrak{a}|^{\dim({\sf G})}.
$$
Since the kernel of the reduction map $\pi_{\mathfrak{a}}:\Gamma\to {\sf
  G}^{(\mathfrak{a})}({O}_S/\mathfrak{a})$ is equal to $\Gamma(\mathfrak{a})$,
this implies the claim of the lemma.
\end{proof}

\begin{remark}\label{r:q_0}
The constant $q_0$ in our results can be computed in principle.
It depends on the implicit constant in Theorem \ref{th:S-arith},
which is given explicitly in \cite{GN2}, and on $T(\alpha)$ in (\ref{eq:222}).
An explicit value of $T(\alpha)$ can derived from the asymptotic formula for 
$\vol(B_T)$ (see \cite[Ch.~7]{GN1}). 
\end{remark}

\begin{proof}[Proof of Theorem \ref{th:lift varieties}]

By the Borel--Harish-Chandra theorem \cite{bh},
the set ${\sf X}({O}_{S})$ is a union of finitely many
orbits of $\Gamma={\sf G}({O}_S)$. Hence, it suffices to prove the claim for every
$\bar x\in {\sf X}^{(\mathfrak{a})}(O_S/\mathfrak{a})$ that lifts to a point $x\in {\sf X}({O}_S)$
contained in a $\Gamma$-orbit $\Gamma x_0$ for some fixed $x_0\in {\sf X}({O}_S)$.
If ${\sf G}$ is anisotropic over every $v\in S$,
then $\Gamma$ is finite, and the claim is trivial. Hence, we may assume that ${\sf G}$
is isotropic for some $v\in S$. Then Theorem \ref{th:lifting group} applies.
We have 
$$
\bar x=\pi_{\mathfrak{a}}(\gamma\cdot x_0)=\pi_{\mathfrak{a}}(\gamma)\cdot \pi_{\mathfrak{a}}(x_0)
$$ for some $\gamma\in \Gamma$.
By Theorem \ref{th:lifting group}, there exists $\gamma'\in\Gamma$ such that
$$
 \pi_{\mathfrak{a}}(\gamma')=\pi_{\mathfrak{a}}(\gamma) \quad\hbox{and}\quad \height(\gamma')\le  |O_S/\mathfrak{a}|^{\sigma}
$$
where $\sigma$ is as in Theorem \ref{th:lifting group}.
Since $\bar x=\pi_{\mathfrak{a}}(\gamma')\cdot \pi_{\mathfrak{a}}(x_0)=\pi_{\mathfrak{a}}(\gamma'\cdot x_0)$, it remains
to observe that
\begin{equation}\label{eq:N}
\height(\gamma'\cdot x_0)\ll \height(\gamma')^N
\end{equation}
for some uniform $N>0$ determined by the action.
\end{proof}

\begin{proof}[Proof of Theorem \ref{th:lift quant}]
Let $\gamma_0\in\Gamma$ be such that $\pi_{\mathfrak{a}}(\gamma_0)=\bar x$.
By Theorem \ref{th:S-arith},
$$
|\gamma_0\Gamma(\mathfrak{a})\cap B_T|
=\frac{\vol(B_T)}{|\Gamma:\Gamma(\mathfrak{a})|} 
\left(1 +O_\delta\left(\frac{|\Gamma:\Gamma(\mathfrak{a})|}{\vol(B_T)^{\delta}}\right)\right)
$$
for every $\delta< \delta_0=(2n_e(p_S))^{-1}a_S/(a_S+d_S)$.
Hence, it follows from (\ref{eq:111}), and (\ref{eq:222}) that
for every $\alpha<\alpha_S(G)$ and $T>T(\alpha)$, we have
$$
|\gamma_0\Gamma(\mathfrak{a})\cap B_T|
=\frac{\vol(B_T)}{|\Gamma:\Gamma(\mathfrak{a})|} 
\left(1 +O_\delta\left(|{O}_S/\mathfrak{a}|^{\dim({\sf G})}T^{-\alpha\delta}\right)\right).
$$
Hence, if we pick $T=|{O}_S/\mathfrak{a}|^{\sigma}$ with $\sigma>\sigma_0$ as in (\ref{eq:sigma_0})
and sufficiently large $|{O}_S/\mathfrak{a}|$, then
\begin{align*}
|\gamma_0\Gamma(\mathfrak{a})\cap B_T|
&=\frac{\vol(B_T)}{|\Gamma:\Gamma(\mathfrak{a})|} 
\left(1 +O_{\alpha,\delta}\left(|{O}_S/\mathfrak{a}|^{\dim({\sf G})-\sigma\alpha\delta}\right)\right)\\
&=\frac{\vol(B_T)}{|\Gamma:\Gamma(\mathfrak{a})|} 
\left(1 +O_\epsilon\left(|{O}_S/\mathfrak{a}|^{\dim({\sf G})-
\dim({\sf G})\sigma_0^{-1}\sigma+\epsilon}\right)\right)
\end{align*}
for every $\epsilon>0$, where we have used that $\sigma_0=\dim({\sf G})/(\alpha_S\delta_0)$.
Finally, 
to complete the proof, we note that
$$
|\Gamma:\Gamma(\mathfrak{a})|=|{\sf G}^{(\mathfrak{a})}(O_S/\mathfrak{a})|
$$
and by Theorem \ref{th:S-arith},
$$
\vol(B_T)=
\left|\left\{x\in {\sf G}(O_S);\,\, \height(x)\le 
|O_S/\mathfrak{a}|^{\sigma}\right\}\right|\left(1+
O_{\alpha,\delta}\left(|O_S/\mathfrak{a}|^{-\sigma\alpha\delta}\right)\right).
$$
\end{proof}

Coming back to Example \ref{ex:lift}, we note that the variety $\sf X$ defined in \eqref{eq:X0}
is a homogeneous space of the spinor group ${\sf G}={\rm Spin}(B)$.
We have $\dim ({\sf G})=m(m-1)/2$. The H\"older exponent is $a_S=1$ by \cite[Prop.~7.3]{GN1}.
Now we assume that $\sf G$ has maximal $\mathbb{R}$-rank
(i.e., the signature of $B$ is $(\lfloor m/2\rfloor,m-\lfloor m/2\rfloor)$).
Then the integrability exponent is $p_S=m-1$ for odd $m$ and $p_S=m$ for even $m$ by \cite{l,o}.
By \cite{drs,em}, the growth rate $\alpha_S(G)$ of integral points in ${\sf G}(\mathbb{Z})$
can be estimated in terms of volume growth of the norm balls which is computable
in terms of the root data of ${\sf G}$ (see \cite{mau,gos}).
This gives $\alpha_S(G)=(m-1)^2/4$ for odd $m$ and $\alpha_S(G)=m(m+2)/4$ for even $m$.
Hence, Theorem \ref{th:lifting group} holds with
$$
\sigma>\left\{
\begin{tabular}{ll}
$\frac{2m(m^2-m+1)n_e}{m-1}$, &\hbox{when $m$ is odd,}\\
$\frac{2(m-1)(m^2-m+1)n_e}{m+2}$, & \hbox{when $m$ is even,}
\end{tabular}
\right.
$$
where $n_e$ denotes the least even integer $\ge \lfloor m/2\rfloor$.
We note that the action of ${\rm Spin}(B)$ on ${\sf X}$ can be given
by the standard Clifford algebra construction (see \cite[Ch.~II, \S7]{di}))
which implies that \eqref{eq:N} holds with $N=2$. This explains \eqref{eq:X1}.

\section{Integral points on subvarieties}\label{sec:sub}

The following result is a precise version of Theorem \ref{th:upper0} from Introduction.
In the statement we use notation introduced in Theorem \ref{th:S-arith}.

\begin{theorem}\label{th:upper}
Let ${\sf G}$ be a connected $F$-simple simply connected algebraic group defined over a number field $F$.
Let $S\subset V_F$ be a finite subset containing all Archemedean absolute values such that $\sf G$
is isotropic over $S$.
Then for every absolutely irreducible proper
affine subvariety ${\sf Y}$ of ${\sf G}$ defined over $F$, we have
$$
N_T({\sf Y}({O}_S))\ll_{{\sf G},\deg({\sf X}),\epsilon} N_T({\sf G}({O}_S))^{1-\frac{a_S(\dim({\sf G})-\dim({\sf Y}))}{\dim ({\sf
        G})(a_S+d_S)2n_e(p_S)}+\epsilon},\quad \epsilon>0,
$$
as $T\to\infty$.
\end{theorem}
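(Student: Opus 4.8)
The plan is to reduce the problem of counting $S$-integral points on the subvariety $\sf Y$ to the uniform counting estimate of Theorem \ref{th:S-arith} applied to congruence subgroups $\Gamma(\mathfrak{a})$, exactly in the spirit of \cite[\S4.3]{NS}. First I would reduce to the case where the integral points $\sf Y(O_S)$ lie in a single $\Gamma$-orbit, or rather (since $\sf Y$ itself need not be homogeneous) I would simply intersect with a large ball $B_T$ and bound $|{\sf Y}(O_S)\cap B_T|$ directly. The key device is a \emph{congruence sieve}: for a prime ideal $\mathfrak{p}$ of $O_S$ with residue field of size $p$, the reduction of $\sf Y$ modulo $\mathfrak{p}$ is a proper subvariety of ${\sf G}^{(\mathfrak{p})}$, so by Lang--Weil $|{\sf Y}^{(\mathfrak{p})}(O_S/\mathfrak{p})|\ll_{\deg({\sf Y})} p^{\dim({\sf Y})}$, whereas $|{\sf G}^{(\mathfrak{p})}(O_S/\mathfrak{p})|\asymp p^{\dim({\sf G})}$ by Lemma \ref{l:upper0}. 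Hence the proportion of residues modulo $\mathfrak{p}$ occupied by $\sf Y$ is $\ll p^{-(\dim({\sf G})-\dim({\sf Y}))}$.

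The main step is then to intersect with \emph{many} congruences at once. Fix a squarefree ideal $\mathfrak{a}=\prod_{\mathfrak{p}\in P}\mathfrak{p}$ where $P$ ranges over prime ideals with norm in a dyadic window, say $N(\mathfrak{p})\in[Q,2Q]$, chosen so that $|O_S/\mathfrak{a}|$ is a suitable power of $T$. Any point $x\in{\sf Y}(O_S)\cap B_T$ reduces modulo $\mathfrak{a}$ into the set $\pi_{\mathfrak{a}}({\sf Y}(O_S))$, which by the Chinese remainder theorem and the $\mathfrak{p}$-adic bound has size $\ll |O_S/\mathfrak{a}|^{\dim({\sf Y})}$ up to a factor $|O_S/\mathfrak{a}|^{\dim({\sf G})(\text{small})}$ — more precisely $|\pi_{\mathfrak{a}}({\sf Y}(O_S))|\le \prod_{\mathfrak{p}\in P}|{\sf Y}^{(\mathfrak{p})}(O_S/\mathfrak{p})|$. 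For each residue class $\bar x$ that is actually attained, the number of lifts $x\in\Gamma x_0\cap B_T$ with $\pi_{\mathfrak{a}}(x)=\bar x$ is, by Theorem \ref{th:S-arith} applied to a translate of $\Gamma(\mathfrak{a})$, equal to $\vol(B_T)/[\Gamma:\Gamma(\mathfrak{a})]$ plus an error of size $\vol(B_T)^{1-(2n_e(p_S))^{-1}a_S/(a_S+d_S)+\epsilon}$, uniformly in $\mathfrak{a}$ and in the translate. Summing over the $\ll|O_S/\mathfrak{a}|^{\dim({\sf Y})}$ attained classes gives
\[
N_T({\sf Y}(O_S))\ll |O_S/\mathfrak{a}|^{\dim({\sf Y})}\left(\frac{\vol(B_T)}{|O_S/\mathfrak{a}|^{\dim({\sf G})}}+\vol(B_T)^{1-\delta_0+\epsilon}\right),
\]
where $\delta_0=(2n_e(p_S))^{-1}a_S/(a_S+d_S)$ and I have used $[\Gamma:\Gamma(\mathfrak{a})]\asymp|O_S/\mathfrak{a}|^{\dim({\sf G})}$.

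The endgame is an optimisation in the free parameter $|O_S/\mathfrak{a}|$. Writing $|O_S/\mathfrak{a}|=\vol(B_T)^{\beta}$ for a parameter $\beta>0$ to be chosen (the existence of a squarefree $\mathfrak{a}$ with norm close to any prescribed value follows from standard density-of-primes estimates in $O_S$, which only costs an $\epsilon$ in the exponent), the two terms in the bracket become $\vol(B_T)^{1-\beta\dim({\sf G})}$ and $\vol(B_T)^{1-\delta_0+\epsilon}$, and the prefactor is $\vol(B_T)^{\beta\dim({\sf Y})}$. Balancing the two terms forces $\beta\dim({\sf G})=\delta_0$, i.e. $\beta=\delta_0/\dim({\sf G})$, and then
\[
N_T({\sf Y}(O_S))\ll \vol(B_T)^{1-\delta_0+\beta\dim({\sf Y})+\epsilon}
=\vol(B_T)^{1-\delta_0\frac{\dim({\sf G})-\dim({\sf Y})}{\dim({\sf G})}+\epsilon}.
\]
Since $N_T({\sf G}(O_S))\asymp\vol(B_T)$ (again Theorem \ref{th:S-arith}, with $\mathfrak{a}=O_S$), substituting $\delta_0=a_S/((a_S+d_S)2n_e(p_S))$ yields exactly the stated exponent $1-\frac{a_S(\dim({\sf G})-\dim({\sf Y}))}{\dim({\sf G})(a_S+d_S)2n_e(p_S)}+\epsilon$. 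The main obstacle I anticipate is making the sieving step fully uniform and rigorous: one must control $|\pi_{\mathfrak{a}}({\sf Y}(O_S))|$ by the product of local reductions with a constant depending only on $\deg({\sf Y})$ and not on $\mathfrak{a}$ (here absolute irreducibility of $\sf Y$ and the Lang--Weil estimates with explicit dependence on degree are essential), and one must verify that Theorem \ref{th:S-arith} is genuinely uniform over all translates $\gamma_0\Gamma(\mathfrak{a})$ and all (squarefree) $\mathfrak{a}$ simultaneously — which is precisely the uniformity emphasised after \eqref{eq:lift}. A secondary technical point is handling the finitely many $\Gamma$-orbits on ${\sf Y}(O_S)$ and the passage from ${\sf Y}(O_S)\cap B_T$ to a genuine orbit count, which only introduces a bounded multiplicative constant depending on $\sf Y$.
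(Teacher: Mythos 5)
Your proposal is correct and follows the same strategy as the paper's proof: reduce modulo an auxiliary congruence, bound the number of occupied residue classes on $\sf Y$ via Lang--Weil type estimates (\cite[Prop.~12.1]{gl}), count lifts in each class uniformly via Theorem \ref{th:S-arith}, lower-bound $[\Gamma:\Gamma_v]$ via Lang--Weil together with strong approximation, and finally optimise the size of the modulus against $\vol(B_T)$. The one deviation is that you sieve by a squarefree product of primes in a dyadic window, whereas the paper takes a single prime ideal $\mathfrak{p}_v$ (which the prime number theorem for $O_F$ already supplies at the required scale), and this is cleaner: it avoids the concern you yourself flag about the Lang--Weil constant being raised to the power of the number of prime factors, and it also makes the mention of a $\Gamma$-orbit decomposition of ${\sf Y}(O_S)$ unnecessary, since ${\sf Y}(O_S)\subset\Gamma$ and one simply covers ${\sf Y}(O_S)$ by cosets of $\Gamma_v$ directly.
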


Coming back to Example \ref{ex:sub}, we note
that in this case, $\dim (\hbox{SL}_n(\mathbb{R}))=n^2-1$, and so 
$N_T(\hbox{SL}_n(\mathbb{Z}))\sim c_n T^{n^2-n}$ with $c_n>0$. Furthermore $p_S=2(n-1)$ (see \cite{drs}), and $a_S=1$ (see \cite[Prop.~7.3]{GN1}). Hence, 
estimate \eqref{eq:sl} is a special case of Theorem \ref{th:upper}.

\begin{proof}[Proof of Theorem \ref{th:upper}]
For non-Archemedean $v\in V_F$, we denote by $f_v$ the corresponding residue field and by
$\mathfrak{p}_v$ the corresponding prime ideal.

We consider the reduction ${\sf Y}^{(v)}$ of the variety $\sf Y$ modulo a valuation $v$.
Then by Noether's theorem, ${\sf Y}^{(v)}$ is absolutely irreducible for almost all $v$.
Moreover, $\dim({\sf Y}^{(v)})=\dim({\sf Y})$ and $\deg({\sf Y}^{(v)})=\deg({\sf Y})$ 
for almost all $v$ (see \cite[Sec.~1]{odoni}). Therefore, by \cite[Prop.~12.1]{gl},
we have the following estimate
\begin{equation}\label{eq:coset0}
|{\sf Y}^{(v)}(f_v)|\ll_{\deg(\sf Y)} |f_v|^{\dim ({\sf Y})},
\end{equation}
valid for almost all $v$. We observe that each fiber of the reduction map ${\sf Y}({O}_S)\to {\sf
  Y}^{(v)}(f_v)$ is contained in a coset of the subgroup $\Gamma_v=\{\gamma\in \Gamma:\,
\gamma=I\,\hbox{\rm mod}\, \mathfrak{p}_v\}$ of $\Gamma={\sf G}({O}_S)$. 
Hence, it follows that ${\sf Y}({O}_S)$ is contained in a union of
at most $O_{\deg(\sf Y)}\left(|f_v|^{\dim({\sf Y})}\right)$ cosets $\gamma\Gamma_v$ with
 $\gamma\in \Gamma$. 

The crucial ingredient of the proof is Theorem \ref{th:S-arith}, which gives an 
estimate of the number points in the cosets $\gamma\Gamma_v$ uniformly over $\gamma\in \Gamma$.
More precisely, by Theorem \ref{th:S-arith}, for all all $v$,
\begin{equation}\label{eq:coset}
|\gamma\Gamma_v\cap
B_T|=\frac{\vol(B_T)}{|\Gamma:\Gamma_v|}+O_\epsilon\left(\vol(B_T)^{1-\frac{a_S}{(a_S+d_S)2n_e(p_S)}+\epsilon}\right),\quad
\epsilon>0.
\end{equation}
For almost all $v$, the reduction ${\sf G}^{(v)}$ is smooth geometrically irreducible variety of dimension $\dim {\sf G}$.
Therefore, we have the Lang--Weil estimate (see \cite{lw})
$$
|{\sf G}^{(v)}(f_v)|=|f_v|^{\dim({\sf G})}+O_{\sf G}\left(|f_v|^{\dim ({\sf G})-1/2}\right).
$$
Since $\sf G$ is simply connected $F$-simple and isotropic over $S$, 
it follows from the strong approximation
property (see \cite[Theorem~7.12]{PR}) that the reduction map $\Gamma\to {\sf G}^{(v)}(f_v)$
is surjective for all $v\notin S$. This implies the estimate
$$
|\Gamma:\Gamma_v|= |{\sf G}^{(v)}(f_v)|\gg |f_v|^{\dim ({\sf G})}
$$
for almost all $v$.

Finally, we conclude from \eqref{eq:coset0} and \eqref{eq:coset} that
for all $v$,
$$
|{\sf Y}({O}_S)\cap B_T|\ll_{{\sf G}, \deg({\sf Y}),\epsilon} |f_v|^{\dim({\sf Y})}\left(\frac{\vol(B_T)}{|f_v|^{\dim({\sf
          G})}}+\vol(B_T)^{1-\frac{a_S}{(a_S+d_S)2n_e(p_S)}+\epsilon}\right),\quad \epsilon>0.
$$
To optimise this estimate, we take  $v$ such that
$$
\vol(B_T)^{\frac{a_S}{(a_S+d_S)2n_e(p_S)}}\le  |f_v|^{\dim ({\sf G})} \le 2\,\vol(B_T)^{\frac{a_S}{(a_S+d_S)2n_e(p_S)}}.
$$
For sufficiently large $T$, such $v$ exists by the prime number theorem for the ring of integers $O$ in $F$.
This gives the estimate
$$
N_T({\sf Y}({O}_S))=|{\sf Y}({O}_S)\cap B_T|\ll_{{\sf G}, \deg({\sf Y}),\epsilon}
\vol(B_T)^{1-\frac{a_S(\dim({\sf G})-\dim({\sf Y}))}{\dim({\sf
        G})(a_S+d_S)2n_e(p_S)}+\epsilon},\quad \epsilon>0,
$$ 
as $T\to\infty$.
Since $N_T({\sf G}({O}_S))\sim \vol(B_T)$ by Theorem \ref{th:S-arith}, this
completes the proof.
\end{proof}

\section{Almost prime points on varieties and orbits}\label{sec:affine}

We now turn to the problem of establishing the existence of almost prime points on symmetric varieties. 
We shall use the notation from Section \ref{sec:affine0}. In particular,
$\sf G$ is a connected $\mathbb{Q}$-simple simply connected algebraic group defined over $\mathbb{Q}$
and $\sf L$ is a symmetric $\mathbb{Q}$-subgroup.
Let $G={\sf G}(\mathbb{R})$ and $L={\sf L}(\mathbb{R})$. Then 
$G$ is a connected semisimple Lie group with finite center
and $L$ is a closed symmetric subgroup of $G$.
We shall use the structure theory of affine symmetric spaces (see \cite[Part II]{hs}).
Fix a maximal compact subgroup $K$ of $G$ compatible with $L$
and a Cartan subgroup $A$ for the pair $(K,L)$. Then the Cartan decomposition
$$
G=KA^+L
$$
holds where $A^+$ denotes a closed positive Weyl chamber in $A$.
Let $M$ denote the centraliser of $A$ in $K\cap L$.
We fix a bounded subset $\Psi$ of $M\backslash L$ with nonempty interior which we assume to
be Lipschitz well-rounded (in the sense of \cite[Sec.~7]{GN2}).
We also denote by $\dot{A}^+$ the interior of the Weyl chamber $A^+$ and  set
$$
S_T=\{g\in K {} \dot{A}^+\Psi:\, \|gv\|\le T\}.
$$
We note that it was shown in \cite[Prop.~8.4]{GN2} that the sets $S_{e^t}$ are H\"older well-rounded
with exponent $1/3$.

Our main tool is the following result on counting of lattice
points in $S_T$ for the congruence subgroups $\Gamma(q)=\{\gamma\in\Gamma:\,\gamma=I\,\hbox{\rm mod}\, q\}$
of $\Gamma={\sf G}(\mathbb{Z})$.
We note that representations $L^2_0(G/\Gamma(q))$ are all $L^{p+}$ 
with uniform $p>0$ by \cite{cl}, so
that the following theorem is a special case of \cite[Th.~8.1]{GN2}.

\begin{theorem}[\cite{GN2}]\label{th:GN2}
For every $\gamma_0\in\Gamma$ and $q\ge 1$, 
$$
|\gamma_0\Gamma(q)\cap S_T|=\frac{\vol(S_T)}{[\Gamma:\Gamma(q)]}+O_{\epsilon}\left(\vol(S_T)^{1-(2n_e(p))^{-1}(1+3\dim G)^{-1}+\epsilon}\right),\quad \epsilon>0,
$$
where the Haar measure is normalised so that $\vol(G/\Gamma)=1$.
\end{theorem}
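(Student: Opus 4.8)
The plan is to obtain this statement as a direct application of the general effective lattice-point counting theorem \cite[Th.~8.1]{GN2}, whose two standing hypotheses are a uniform spectral gap for the family $\{G/\Gamma(q)\}_{q\ge 1}$ and Hölder well-roundedness of the averaging sets. Thus the task reduces to verifying these two inputs in the present symmetric-space setting and then quoting the black box; the counting theorem itself is obtained from the quantitative mean ergodic theorem for the $G$-action on $G/\Gamma(q)$ together with a sandwiching argument, and we do not reprove it here.

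\textbf{Spectral input.} The representations $L^2_0(G/\Gamma(q))$ are uniformly $L^{p+}$-integrable, with one exponent $p$ independent of $q$; this is property $(\tau)$ for $\Gamma={\sf G}(\mathbb{Z})$, which applies here because $\sf G$ is simply connected, $\mathbb{Q}$-simple and isotropic over $\mathbb{Q}$, and is due in full generality to Clozel \cite{cl}. Feeding this into the quantitative mean ergodic theorem of \cite{GN1}, the operator norm of $\pi_{G/\Gamma(q)}(\beta_T)$ on $L^2_0(G/\Gamma(q))$ — where $\beta_T$ is the $L^1$-normalised indicator of $S_T$ — decays like $\vol(S_T)^{-(2n_e(p))^{-1}}$, with constants that do not depend on $q$ precisely because $p$ does not.

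\textbf{Geometric input.} I would recall that $S_T=\{g\in K\dot{A}^+\Psi:\|gv\|\le T\}$ is built from the Cartan decomposition $G=KA^+L$ adapted to the symmetric pair $(G,L)$, with the $M\backslash L$ factor ranging over a Lipschitz well-rounded set $\Psi$. The key point is that the family $S_{e^t}$ is Hölder well-rounded with exponent $1/3$: a multiplicative $\epsilon$-thickening of $S_T$ on either side changes its volume by a proportion $O(\epsilon^{1/3})$. This is \cite[Prop.~8.4]{GN2}; the exponent $1/3$ combines the Lipschitz regularity of $\Psi$, the smoothness of the Jacobian of the $KA^+L$ decomposition on the open Weyl chamber, and a monotonicity/transversality estimate for $t\mapsto\|k\exp(tH)\psi v\|$ that controls how the level set $\{\|gv\|\le T\}$ slices the $KA^+\Psi$-coordinates. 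Note that $\|\cdot v\|$ is not bi-$K$-invariant, so one genuinely needs this refined notion of well-roundedness rather than the classical one for bi-invariant balls.

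\textbf{Conclusion.} Given these two facts, \cite[Th.~8.1]{GN2} applies and yields the asymptotic: one sandwiches $|\gamma_0\Gamma(q)\cap S_T|$ between thickened and shrunk main terms $\vol(S_T^{\pm\epsilon})/[\Gamma:\Gamma(q)]$, controls the discrepancy between $|\gamma_0\Gamma(q)\cap S_T|$ and $\vol(S_T)/[\Gamma:\Gamma(q)]$ by applying the spectral decay to a smoothed approximant of $\beta_T$, and optimises the two resulting errors against each other. Balancing gives total error $\vol(S_T)^{1-(2n_e(p))^{-1}(1+3\dim G)^{-1}+\epsilon}$, where $1+3\dim G=(a+d)/a$ with $a=1/3$ the Hölder exponent of $S_{e^t}$ and $d=\dim G$ — matching the role of $a_S/(a_S+d_S)$ in Theorem \ref{th:S-arith}. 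Uniformity over $\gamma_0\in\Gamma$ is automatic, since every estimate depends only on $S_T$, its volume, and the ($q$-uniform) spectral gap, not on the base point of the coset. The only step that is genuine work rather than bookkeeping is the Hölder well-roundedness of $S_T$, because of the awkward interaction between the norm condition and the non-bi-invariant $KA^+\Psi$ parametrisation; but this has already been established in \cite[Prop.~8.4]{GN2}, so here it suffices to invoke it.
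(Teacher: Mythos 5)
Your proposal is correct and follows the same route as the paper: both treat the statement as a direct specialisation of \cite[Th.~8.1]{GN2}, with the only work being to check the two hypotheses — uniform $L^{p+}$-integrability of $L^2_0(G/\Gamma(q))$ via Clozel's property $(\tau)$, and Hölder well-roundedness of $S_{e^t}$ with exponent $1/3$ via \cite[Prop.~8.4]{GN2}. Your identification of the error exponent $(1+3\dim G)^{-1}$ as the specialisation $a/(a+d)$ with $a=1/3$, $d=\dim G$ also matches the paper's bookkeeping.
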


We note that by \cite{drs,em}
\begin{equation}\label{eq:vol1}
|\cO(T)|\sim \frac{\vol(L/(L\cap\Gamma))}{\vol(G/\Gamma)}\cdot\vol(S_Tv)\quad\hbox{as $T\to\infty$,}
\end{equation}
where $\vol$ denote $G$-invariant measures on the corresponding spaces.
It was shown in \cite[Sec.~6]{gos} that
\begin{equation}\label{eq:vol2}
\vol(S_Tv)\sim v_0 T^{\alpha(G/H)}(\log T)^\beta\quad\hbox{as $T\to\infty$,}
\end{equation}
for some $v_0>0$, $\alpha(G/H)\in \mathbb{Q}^+$, and $\beta\in\mathbb{Z}^+$.
Also, it is clear that
\begin{equation}\label{eq:vol3}
\vol(S_T)= \vol (S_Tv)\cdot \vol(\Psi).
\end{equation}

Now we prove the following theorem, which is a more explicit  version of Theorem \ref{th:primes0}
stated in \S  \ref{sec:affine0} (we refer there for the notation used below).   
\begin{theorem}\label{th:primes}
With the notation above, let $r$ be the least integer satisfying 
$$r  >   9 \alpha(G/H)^{-1}\,( 1 + \dim (G))( 1 + 3\dim (G)) 2n_e(p)\cdot t(f) \deg (f).$$ 
Then 
\[
| \{ x \in \cO (T ): f ( x ) \ 
\mbox{has at most} \ r \ \mbox{prime factors} \}
| \, \gg \, \frac{|\cO ( T )|}{(\log T )^{t(f)}}
\]
as $T\to\infty$.
\end{theorem}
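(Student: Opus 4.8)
The strategy is the standard combinatorial sieve, in the axiomatic form of Brun–Hooley (as used in \cite{NS} and \cite{bgs}), fed by the equidistribution input of Theorem \ref{th:GN2}. First I would set up the sieve framework: let $\cA=\{f(x):x\in\cO(T)\}$ as a multiset, let $\cA_d=\{x\in\cO(T):\, d\mid f(x)\}$, and for each prime $p$ let the "density" be governed by $|\cA_p|/|\cA|$. The main term $|\cO(T)|\sim c\cdot\vol(S_Tv)$ comes from \eqref{eq:vol1}--\eqref{eq:vol3}, and for each squarefree $d$ coprime to a fixed bad modulus one needs the fundamental identity
\[
|\cA_d| = \beta(d)\,|\cO(T)| + \text{(error)},
\]
where $\beta$ is multiplicative, $\beta(p)=O(1/p)$ on average, and the error is of size $O_\epsilon(\vol(S_Tv)^{1-\theta+\epsilon})$ with $\theta=(2n_e(p))^{-1}(1+3\dim G)^{-1}$, uniformly in $d$ up to some level of distribution $D=T^{\delta}$. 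This step is where Theorem \ref{th:GN2} enters: $d\mid f(x)$ is a congruence condition on $x$ modulo $d$, hence $\cA_d$ is a union of cosets $\gamma_0\Gamma(d)\cap S_T$ (more precisely, one works with $\Gamma(d)$ and counts how many residue classes mod $d$ satisfy $d\mid\tilde f$; that count is $O(d^{\dim G - 1 + o(1)})$ for the "good" part and this is where the factorisation type $t(f)$ and $\deg(f)$ of $f$ enter, via Lang--Weil / the structure of the hypersurface $\{\tilde f\equiv 0\}$), and Theorem \ref{th:GN2} gives the asymptotic for each coset with a power-saving error that is \emph{uniform in} $\gamma_0$ and in the modulus.

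Second, I would establish the local density estimates: using that $\CC[{\sf X}]$ is a UFD (Lemma \ref{l:unique}) and that $f=f_1\cdots f_t$ with distinct irreducible $f_i$ defined over $\QQ$, the singular series $\prod_p(1-\beta(p))^{-1}$ converges off a finite set of primes, and one gets the lower bound $|\cA|/\prod_{p<z}(\text{local factors})\gg |\cO(T)|/(\log z)^{t}$ after the sieve --- the exponent $t=t(f)$ appears precisely because $\tilde f$ has $t$ distinct irreducible components, each contributing one factor of $\log$. Third, run the Brun--Hooley sieve with sieving parameter $z=T^{\delta/s}$ for a suitable integer $s$: the output is that the number of $x\in\cO(T)$ with $f(x)$ having no prime factor below $z$ is $\gg |\cO(T)|/(\log T)^{t(f)}$, and since $|f(x)|\ll T^{\deg f}$ on $\cO(T)$, such an $f(x)$ has at most $\deg(f)/\delta\cdot s$ prime factors. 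The bookkeeping of how large $r$ must be amounts to: the level of distribution is $D=\vol(S_Tv)^{\theta-\epsilon}\approx T^{\alpha(G/H)\theta-\epsilon}$, one can take $z=D^{1/9}$ (the Brun--Hooley loss), whence the number of prime factors of $f(x)\ll T^{\deg f\cdot t}$ (bounding each $f_i$ by $T^{\deg f_i}$ and multiplying) is at most $9\deg(f)t(f)/(\alpha(G/H)\theta)$, which unwinds to exactly the stated bound $r>9\alpha(G/H)^{-1}(1+\dim G)(1+3\dim G)2n_e(p)\,t(f)\deg(f)$ once one substitutes $\theta^{-1}=2n_e(p)(1+3\dim G)$ and absorbs the factor $(1+\dim G)$ coming from the coset count $d^{\dim G-1}$ against $|\Gamma:\Gamma(d)|\asymp d^{\dim G}$.

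The main obstacle --- and the reason this goes beyond \cite{NS}, where ${\sf X}\simeq{\sf G}$ --- is the uniform level of distribution on the symmetric variety: one must verify that Theorem \ref{th:GN2} really does provide the congruence-uniform count for the \emph{skewed} sets $S_T=\{g\in K\dot A^+\Psi:\|gv\|\le T\}$ adapted to the $KA^+L$ decomposition, and that the relevant local factors $\beta(p)$ behave multiplicatively with the right main term; this rests on \cite[Prop.~8.4]{GN2} (H\"older well-roundedness of $S_{e^t}$, exponent $1/3$, which is the source of the $(1+3\dim G)$) together with the strong approximation / connectedness hypotheses on ${\sf L}$ that make $\cO$ a single orbit with a clean reduction mod $q$. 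Once the uniform asymptotic for $|\gamma_0\Gamma(q)\cap S_T|$ is in hand, the sieve machinery is essentially identical to \cite[\S4]{NS}, and the symmetric-space input is confined to \eqref{eq:vol1}--\eqref{eq:vol3} and Theorem \ref{th:GN2}. I would present the argument by first reducing to the sieve axioms, citing \cite{NS} for the sieve itself, then spending the bulk of the proof on the level-of-distribution statement via Theorem \ref{th:GN2} and the Lang--Weil bound for $\{\tilde f\equiv 0 \bmod p\}$, and finally doing the exponent bookkeeping to match the claimed value of $r$.
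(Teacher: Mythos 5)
Your plan is essentially the same as the paper's: run the Halberstam--Richert combinatorial sieve with the level-of-distribution input from Theorem \ref{th:GN2}, track the exponents from the H\"older error $\vol(S_T)^{1-(2n_e(p))^{-1}(1+3\dim G)^{-1}+\epsilon}$ and the coset bound $|{\sf G}(\ZZ/d)\cap\{\tilde f=0\}|\ll d^{\dim G-1}$, and convert to the orbit at the end. Two details that the paper is careful about and you partly gloss over are worth flagging. First, your setup takes $\cA=\{f(x):x\in\cO(T)\}$ and $\cA_d=\{x\in\cO(T):d\mid f(x)\}$, but Theorem \ref{th:GN2} is a statement about $|\gamma_0\Gamma(q)\cap S_T|$ for cosets in $G$, not about fibers of $\cO(T)\to\cO\bmod d$; the clean way to proceed is the paper's: run the entire sieve on $\Gamma\cap S_T$ against $\tilde f=f(\cdot\,v)$, obtaining a lower bound of the form $\gg|\Gamma\cap S_T|/(\log|\Gamma\cap S_T|)^{t(f)}$ for the number of $\gamma$ with $\gcd(\tilde f(\gamma),P_z)=1$, and only at the very end project via $\pi:\Gamma\cap S_T\to\cO(T)$. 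This one-directional projection loses only a bounded constant because the uniqueness in the Cartan decomposition $G=KA^+L$ forces $\pi^{-1}(\gamma_0v)\subset\gamma_0\Psi^{-1}\Psi\cap\Gamma$, which is uniformly bounded since $\Psi$ is bounded; if you tried to formulate the sieve axioms directly on $\cO(T)$ you would need a separate equidistribution statement for the orbit in residue classes rather than for cosets in $\Gamma$. Second, in your appeal to Lemma \ref{l:unique} you should also argue that the lift $\tilde f=f\circ(g\mapsto gv)$ still has exactly $t(f)$ irreducible factors in $\CC[{\sf G}]$: this needs the observation that any irreducible factor of an ${\sf L}$-invariant element of $\CC[{\sf G}]$ is itself ${\sf L}$-invariant when $\sf L$ is connected (so it descends to $\CC[{\sf X}]$), which is where the hypotheses on $\sf L$ enter and why $t(\tilde f)=t(f)$ rather than something larger. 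With those two points made explicit, the rest of your bookkeeping — $\kappa=(9t(f)(1+\dim G)(1+3\dim G)2n_e(p))^{-1}$, $z\asymp T^{\alpha(G/H)\kappa}$, $|\tilde f(\gamma)|\ll T^{\deg f}$, so at most $\deg(f)/(\alpha(G/H)\kappa)$ prime factors — matches the paper and gives the claimed value of $r$.
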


In the case of Example \ref{ex:prime1}, we have $\dim(\hbox{Spin}(Q))=n(n-1)/2$ and $\alpha(G/H)=n-2$. 
When $Q$ has signature $(n,1)$, one can take $p=9(n-1)/7$ (see \cite{bs}).
For other signatures, the group $\hbox{Spin}(Q)(\mathbb{R})$ 
has $\mathbb{R}$-rank at least 2 and we can utilise
the estimates on integrability exponents obtained in \cite{l,o}.
In particular, when $Q$ has signature $(\lfloor n/2\rfloor, n-\lfloor n/2\rfloor)$
(i.e., when $\hbox{Spin}(Q)$ is split over $\mathbb{R}$), we have
$p=n-1$ for odd $n$ and $p=n$ for even $n$.

In the case of Example \ref{ex:prime2}, we have $\dim (\hbox{SL}_n)=n^2-1$,
$\alpha=(n^2-n)/2$ (see \cite[Sec.~2.3]{gos}), and $p=2(n-1)$ (see \cite{drs}).

Before we start the proof of Theorem \ref{th:primes}, we show that the decomposition $f=f_1\cdots f_t$
into irreducible factors is well-defined.

\begin{lemma}\label{l:unique}
Let $\sf G$ be a connected semisimple simply connected algebraic group and $\sf L$ a closed connected subgroup
with no nontrivial characters. Then the coordinate ring $\mathbb{C}[{\sf G}/{\sf L}]$
is a unique factorisation domain.
\end{lemma}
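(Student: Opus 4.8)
The plan is to use the standard criterion that an affine variety has UFD coordinate ring if and only if its Picard group vanishes, and to compute $\operatorname{Pic}(\mathsf{G}/\mathsf{L})$ via the exact sequence relating it to $\operatorname{Pic}(\mathsf{G})$ and the character group of $\mathsf{L}$. First I would recall that since $\mathsf{G}$ is semisimple and simply connected, $\operatorname{Pic}(\mathsf{G})=0$ and, moreover, $\mathsf{G}$ has no nontrivial characters, so $\mathbb{C}[\mathsf{G}]^\times=\mathbb{C}^\times$; in particular $\mathbb{C}[\mathsf{G}]$ is itself a UFD (this is a theorem of Popov, building on the fact that factoriality of $\mathbb{C}[\mathsf{G}]$ is equivalent to triviality of the fundamental group). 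Then I would invoke the Rosenlicht-type exact sequence for the quotient map $\pi\colon\mathsf{G}\to\mathsf{G}/\mathsf{L}$: writing $X^*(\mathsf{L})$ for the character group of $\mathsf{L}$, there is an exact sequence
\[
0 \longrightarrow \mathbb{C}[\mathsf{G}/\mathsf{L}]^\times/\mathbb{C}^\times \longrightarrow \mathbb{C}[\mathsf{G}]^\times/\mathbb{C}^\times \longrightarrow X^*(\mathsf{L}) \longrightarrow \operatorname{Pic}(\mathsf{G}/\mathsf{L}) \longrightarrow \operatorname{Pic}(\mathsf{G}).
\]
Since the outer terms $\mathbb{C}[\mathsf{G}]^\times/\mathbb{C}^\times$ and $\operatorname{Pic}(\mathsf{G})$ both vanish, and $X^*(\mathsf{L})=0$ by the hypothesis that $\mathsf{L}$ has no nontrivial characters, the sequence forces $\operatorname{Pic}(\mathsf{G}/\mathsf{L})=0$.

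Next I would convert the vanishing of the Picard group into factoriality of the coordinate ring. The variety $\mathsf{G}/\mathsf{L}$ is smooth (it is a homogeneous space of a smooth group, or at least normal), so it is locally factorial, hence its local rings are UFDs and Weil divisors coincide with Cartier divisors; thus the divisor class group equals $\operatorname{Pic}(\mathsf{G}/\mathsf{L})$. A normal Noetherian domain is a UFD precisely when its divisor class group is trivial (Nagata's criterion), so $\mathbb{C}[\mathsf{G}/\mathsf{L}]$ is a UFD. One should also note that $\mathsf{G}/\mathsf{L}$ is affine: since $\mathsf{L}$ is connected reductive—or more precisely, since in the applications $\mathsf{L}$ is symmetric and hence reductive—Matsushima's criterion guarantees $\mathsf{G}/\mathsf{L}$ is affine and $\mathbb{C}[\mathsf{G}/\mathsf{L}]=\mathbb{C}[\mathsf{G}]^{\mathsf{L}}$; in any case the statement is about the coordinate ring of the quotient so affineness (or at least quasi-affineness with finitely generated ring of functions) is implicit in the setup.

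The main obstacle, and the only genuinely nontrivial input, is the factoriality of $\mathbb{C}[\mathsf{G}]$ itself for $\mathsf{G}$ semisimple simply connected, together with the exactness of the units-characters-Picard sequence above; both are classical (Popov, and the Rosenlicht/Fossum–Iversen descent of Picard groups along torsor-like quotients) but deserve an explicit citation. A minor point to handle carefully is that $\pi\colon\mathsf{G}\to\mathsf{G}/\mathsf{L}$ is an $\mathsf{L}$-torsor only étale-locally and $\mathsf{L}$ need not be a torus, so the relevant exact sequence is the general one for a connected linear group acting with generically trivial stabiliser, not the simplest torus-quotient version; I would cite the appropriate statement rather than reprove it. Once these two ingredients are in place the argument is a two-line diagram chase, so I would keep the proof short and mostly reference-driven.
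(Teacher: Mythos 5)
Your argument is correct and follows essentially the same route as the paper: invoke the character--Picard exact sequence from Fossum--Iversen/Knop--Kraft--Vust/Popov, use $\operatorname{Pic}(\mathsf{G})=0$ for semisimple simply connected $\mathsf{G}$ together with $\mathcal{X}(\mathsf{L})=0$ to conclude $\operatorname{Pic}(\mathsf{G}/\mathsf{L})=0$, and then pass from trivial Picard group on a smooth variety to factoriality of the coordinate ring (the paper cites \cite[Prop.~6.2]{hart} for this last step). Your remark about the factoriality of $\mathbb{C}[\mathsf{G}]$ itself is a correct but inessential side observation---what the argument actually uses is $\operatorname{Pic}(\mathsf{G})=0$, an equivalent fact.
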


\begin{proof}
We refer to \cite{fi,kkv,popov} for computation of Picard groups of homogeneous spaces.
There is an exact sequence
$$
\mathcal{X}({\sf G})\to\mathcal{X}({\sf L})\to \hbox{Pic}({\sf G}/{\sf L})\to \hbox{Pic}({\sf G})
$$
where $\mathcal{X}({\sf G})$ and $\mathcal{X}({\sf L})$ denote the character groups.
Since $\sf G$ is simply connected, $\hbox{Pic}({\sf G})=1$. Hence, it follows from
the exact sequence that $\hbox{Pic}({\sf G}/{\sf L})=1$, and $\mathbb{C}[{\sf G}/{\sf L}]$
is a unique factorisation domain by \cite[Prop.~6.2]{hart}.
\end{proof}

\begin{proof}[Proof of Theorem \ref{th:primes}]
Using the dominant map ${\sf G}\to {\sf X}$,
every element $f\in\mathbb{C}[{\sf X}]$ lifts to an element $\tilde f\in \mathbb{C}[{\sf G}]$.
Since $\sf G$ is simply connected, the ring $\mathbb{C}[{\sf G}]$ is a unique
factorisation domain. We claim that the decomposition of $\tilde f$ into irreducible factors in $\mathbb{C}[{\sf G}]$ is of the form 
$\tilde f=\tilde f_1\cdots \tilde f_t$, where $f=f_1\cdots f_t$ is the decomposition in $\mathbb{C}[{\sf X}]$. 
Indeed, suppose that $\tilde f_i=g_1\cdots g_s$ for $g_1,\ldots,g_s\in \mathbb{C}[{\sf G}]$
is the decomposition into irreducibles.
We consider the right action of $\sf L$ on 
$\mathbb{C}[{\sf G}]$. Since $\tilde f_i$ is $\sf L$-invariant and $\sf L$ is connected,
it follows from uniqueness of the decomposition that each $g_i$ is also $\sf L$-invariant and
descends to a function on $\mathbb{C}[{\sf X}]$, which implies that this decomposition
must be trivial. Hence, $\tilde f_i$'s are irreducible.

Now we apply the argument of \cite{NS} to the polynomial function $\tilde f:\Gamma\to\mathbb{Z}$
and the sets $\Gamma\cap S_T$ (instead of sets $\{\gamma\in\Gamma:\, \|\gamma\|<T\}$).
It follows from Theorem \ref{th:GN2} that for every $q\ge 1$ and $\gamma_0\in \Gamma$, 
\begin{align}\label{eq:sss}
\frac{|\gamma_0\Gamma(q)\cap S_T|}{\vol(S_T)}=
\frac{1}{[\Gamma:\Gamma(q)]}+O_\epsilon\left(\vol(S_T)^{-(2n_e(p))^{-1}(1+3\dim
    G)^{-1}+\epsilon}\right),\quad \epsilon>0.
\end{align}
Therefore, by \eqref{eq:vol2}--\eqref{eq:vol3},
\begin{align*}
\frac{|\gamma_0\Gamma(q)\cap S_T|}{\vol(S_T)}=
\frac{1}{[\Gamma:\Gamma(q)]}+O_\epsilon\left(T^{-\frac{\theta}{1+3\dim(G)}+\epsilon}\right),\quad \epsilon>0,
\end{align*}
where $\theta=\frac{\alpha(G/H)}{2n_e(p)}$. This estimate is a substitute for \cite[Th.~3.2]{NS}.
Given the estimate above for a family of sets $S_T$, the argument in \cite{NS} for norm balls can be
carried out without change, and we conclude that
for sufficiently large $T$,
\begin{equation}\label{eq:sum}
\sum_{\gamma\in \Gamma\cap S_T: \gcd(\tilde f(\gamma),P_z)=1} 1 \gg \frac{|\Gamma\cap S_T|}{(\log
  |\Gamma\cap S_T|)^{t(f)}},
\end{equation}
where 
$$
P_z=\prod_{p\le z} p,\quad  z=|\Gamma\cap S_T|^\kappa,\quad
\kappa=(9 t(f)(1+\dim(G))(1+3\dim(G))2n_e(p))^{-1}.
$$
For every $\gamma\in \Gamma\cap S_T$, we have
$$
|\tilde f(\gamma)|=|f(\gamma v)|\ll T^{\deg(f)}.
$$
On the other hand, if $\gcd(\tilde f(\gamma),P_z)=1$, then
every prime factor of $\tilde f(\gamma)$ is at least $z$,
and $z\gg T^{\alpha(G/H) \kappa}$ by \eqref{eq:sss} and \eqref{eq:vol2}--\eqref{eq:vol3}.
Therefore, for every term in the sum \eqref{eq:sum}, the number of prime factors of $\tilde f(\gamma)$
is bounded above by 
$$
\frac{\deg(f)}{\alpha(G/H)\kappa}=9 \alpha(G/H)^{-1}(1+\dim(G))(1+3\dim(G))2n_e(p)t(f)\deg(f)
$$
provided $T$ is  sufficiently large. We conclude that
\begin{equation}\label{eq:final}
|\{\gamma\in \Gamma\cap S_T:\, \hbox{$f(\gamma v)$ has at most $r$ prime factors}\}|\gg
\frac{|\Gamma\cap S_T|}{(\log |\Gamma\cap S_T|)^{t(f)}}.
\end{equation}

To finish the proof, we consider the projection map
$$
\pi:\Gamma\cap S_T\to \cO(T):\gamma\mapsto \gamma v.
$$
It follows from the uniqueness properties of the Cartan decomposition 
(see \cite[p.~108]{hs}) that
if $\gamma_0,\gamma \in \Gamma\cap S_T$ satisfy $\gamma_0v=\gamma v$,
then their $KA^+$-components are equal modulo $M$,
and $\gamma_0^{-1}\gamma\in \Psi^{-1}\Psi$. Hence,
$$
\pi^{-1}(\gamma_0 v)\subset \gamma_0 \Psi^{-1}\Psi \cap \Gamma,
$$
and the cardinality of every fiber of $\pi$ is bounded by $|\Psi^{-1}\Psi \cap \Gamma|$.
It follows from \eqref{eq:final} that
$$
|\{w\in \cO(T):\, \hbox{$f(w)$ has at most $r$ prime factors}\}|\gg
\frac{|\Gamma\cap S_T|}{(\log |\Gamma\cap S_T|)^{t(f)}}
$$
as $T\to\infty$.
Since $|\Gamma\cap S_T|\asymp \vol(S_T)$, the claim of the theorem now follows
from \eqref{eq:vol1}--\eqref{eq:vol3}.
\end{proof}

We also establish a quantitative version of Theorem \ref{th:lift varieties}
for lifting solutions of congruences in $\mathcal{O}$, which will be used to prove 
Theorem \ref{th:linnik2} in Section \ref{sec:linnik}.

\begin{theorem}\label{th:lift quant sym}
For every 
\begin{equation}\label{eq:sigma2}
\sigma>\sigma_0:=\alpha(G/H)^{-1}\dim(G) (1+3\dim(G))2n_e(p),
\end{equation}
sufficiently large $q$, and $b\in \mathcal{O}\,\hbox{\rm mod}\, q$,
\begin{align*}
\left|\left\{x\in \mathcal{O}(q^\sigma);\,\,\, x=b\,\hbox{\rm mod}\, q\right\}\right|
\gg_\sigma |\mathcal{O}(q^\sigma)|\cdot
\frac{1}{|\mathcal{O}\,\hbox{\rm mod}\, q|}.
\end{align*}
\end{theorem}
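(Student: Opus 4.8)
The plan is to follow the proof of Theorem \ref{th:lift quant}, with the norm balls $B_T$ replaced by the sets $S_T$ and Theorem \ref{th:S-arith} replaced by Theorem \ref{th:GN2}, and then to push the resulting count in $\Gamma$ down to the orbit $\cO$ exactly as in the last step of the proof of Theorem \ref{th:primes}. First I would translate the congruence condition into a statement about cosets of $\Gamma(q)$. Since ${\sf G}$ is $\mathbb{Q}$-simple, simply connected and isotropic over $\mathbb{Q}$, it has the strong approximation property, so the reduction $\Gamma\to {\sf G}(\mathbb{Z}/q)$ is surjective with kernel $\Gamma(q)$. Write $\bar v$ for the reduction of $v$ and $\bar L_q$ for the stabiliser of $\bar v$ in ${\sf G}(\mathbb{Z}/q)$; since $b\in\cO\,\hbox{\rm mod}\, q$ we may fix $\gamma_0\in\Gamma$ with $\gamma_0 v=b\,\hbox{\rm mod}\, q$. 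The image in ${\sf G}(\mathbb{Z}/q)$ of $R:=\{\gamma\in\Gamma:\, \gamma v=b\,\hbox{\rm mod}\, q\}$ is the coset of $\bar L_q$ through the reduction of $\gamma_0$, so $R$ is a union of exactly $|\bar L_q|$ cosets of $\Gamma(q)$ in $\Gamma$. Moreover $\cO\,\hbox{\rm mod}\, q={\sf G}(\mathbb{Z}/q)\bar v$, again by strong approximation, so the orbit--stabiliser relation together with Lemma \ref{l:upper0} gives the identity
$$
|\bar L_q|\cdot |\cO\,\hbox{\rm mod}\, q| \;=\; |{\sf G}(\mathbb{Z}/q)| \;=\; [\Gamma:\Gamma(q)] \;\asymp\; q^{\dim({\sf G})}.
$$

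Next I would count $|R\cap S_T|$ by summing Theorem \ref{th:GN2} over the $|\bar L_q|$ cosets of $\Gamma(q)$ making up $R$; here it is essential that the error term in Theorem \ref{th:GN2} is uniform over the coset and over $q$. This yields
$$
|R\cap S_T| \;=\; \frac{|\bar L_q|\,\vol(S_T)}{[\Gamma:\Gamma(q)]} + O_\epsilon\!\left(|\bar L_q|\,\vol(S_T)^{1-\delta_0+\epsilon}\right), \qquad \delta_0 = \frac{1}{2n_e(p)(1+3\dim G)}.
$$
Dividing by the main term and using the identity above, the relative error is $O_\epsilon\!\bigl([\Gamma:\Gamma(q)]\,\vol(S_T)^{-\delta_0+\epsilon}\bigr)=O_\epsilon\!\bigl(q^{\dim G}\,\vol(S_T)^{-\delta_0+\epsilon}\bigr)$. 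Now I would set $T=q^\sigma$ and use $\vol(S_T)\asymp T^{\alpha(G/H)}(\log T)^\beta$ from \eqref{eq:vol2}--\eqref{eq:vol3}: the relative error becomes $O_\epsilon(q^{\dim G-\sigma\alpha(G/H)\delta_0+\epsilon})$, which tends to $0$ as $q\to\infty$ precisely because $\sigma>\dim(G)\bigl(\alpha(G/H)\delta_0\bigr)^{-1}$, which is the value $\sigma_0$ in \eqref{eq:sigma2}. Hence for all sufficiently large $q$,
$$
|R\cap S_{q^\sigma}| \;\geq\; \tfrac12\,\frac{\vol(S_{q^\sigma})}{|\cO\,\hbox{\rm mod}\, q|}.
$$

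Finally I would transfer this from $\Gamma$ to $\cO$. The map $\pi:\Gamma\cap S_T\to\cO(T)$, $\gamma\mapsto\gamma v$, has fibres of cardinality at most $|\Psi^{-1}\Psi\cap\Gamma|=O(1)$ by the uniqueness properties of the Cartan decomposition, exactly as at the end of the proof of Theorem \ref{th:primes}; and $\pi(R\cap S_T)\subseteq\{x\in\cO(T):\, x=b\,\hbox{\rm mod}\, q\}$. Therefore
$$
\bigl|\{x\in\cO(q^\sigma):\, x=b\,\hbox{\rm mod}\, q\}\bigr| \;\gg\; |R\cap S_{q^\sigma}| \;\gg\; \frac{\vol(S_{q^\sigma})}{|\cO\,\hbox{\rm mod}\, q|} \;\asymp\; \frac{|\cO(q^\sigma)|}{|\cO\,\hbox{\rm mod}\, q|},
$$
where the final step uses $|\cO(T)|\asymp\vol(S_Tv)\asymp\vol(S_T)$ from \eqref{eq:vol1} and \eqref{eq:vol3}. \emph{The main obstacle} is really the bookkeeping in the middle step: it is the exact identity $|\bar L_q|\cdot|\cO\,\hbox{\rm mod}\, q|=[\Gamma:\Gamma(q)]$ --- rather than the crude bound $|\bar L_q|\le q^{\dim G}$ --- that produces the sharp exponent $\sigma_0$ in \eqref{eq:sigma2}; a coarser estimate would cost a factor of $2$. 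One also has to check that Theorem \ref{th:GN2} is being applied with an error term uniform over all $|\bar L_q|$ cosets and over all $q$, which is exactly what its statement guarantees, and that the passage $\cO\,\hbox{\rm mod}\, q={\sf G}(\mathbb{Z}/q)\bar v$ is legitimate, which is where strong approximation enters.
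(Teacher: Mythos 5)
Your proposal is correct and follows essentially the same route as the paper: apply Theorem \ref{th:GN2} to the cosets $\gamma\Gamma(q)\subset S_T$ (exactly as in the proof of Theorem \ref{th:lift quant}, with $B_T$ replaced by $S_T$), sum over the cosets whose image sends $v$ to $b\bmod q$ using the orbit--stabiliser identity $|\mathrm{Stab}_{\Gamma}(b\bmod q):\Gamma(q)|\cdot|\cO\bmod q|=[\Gamma:\Gamma(q)]$, and transfer to $\cO(T)$ via the uniformly bounded fibres of $\gamma\mapsto\gamma v$ together with \eqref{eq:vol1}--\eqref{eq:vol3}. The only difference is presentational — you write out the orbit--stabiliser bookkeeping and the relative error estimate explicitly, where the paper compresses these into a reference to the proof of Theorem \ref{th:lift quant}.
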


\begin{proof}
Using Theorem \ref{th:GN2} and arguing exactly as in 
the proof of Theorem \ref{th:lift quant},  we get the estimate
$$
|\gamma\Gamma(q)\cap S_T|\gg_\sigma \frac{1}{|\Gamma:\Gamma(q)|}\cdot
|\Gamma\cap S_T|.
$$
for $T=q^\sigma$ with sufficiently large $q$ and every $\gamma\in\Gamma$.
This implies that
\begin{align*}
&\left|\left\{\gamma\in \Gamma\cap S_T;\,\,\, \gamma v=b\,\hbox{\rm mod}\, q\right\}\right|
=\sum_{\gamma\in \Gamma/\Gamma(q):\, \gamma v=b\,\hbox{\rm\tiny mod}\,q} |\gamma\Gamma(q)\cap S_T|\\
\gg_\sigma & \;\frac{|\hbox{Stab}_\Gamma(b\,\hbox{\rm mod}\, q):\Gamma(q)|}{|\Gamma:\Gamma(q)|}\cdot
|\Gamma\cap S_T|
=\frac{1}{|\mathcal{O}\,\hbox{\rm mod}\, q|}\cdot |\Gamma\cap S_T|.
\end{align*}
Recall from the previous proof that the cardinality of the fibers of the map
$$
\pi:\Gamma\cap S_T\to \cO(T):\gamma\mapsto \gamma v.
$$
is uniformly bounded.
Therefore,
$$
\left|\left\{x\in \mathcal{O}(T);\,\,\, x=b\,\hbox{\rm mod}\, q\right\}\right|
\gg 
\left|\left\{\gamma\in \Gamma\cap S_T;\,\,\, \gamma v=b\,\hbox{\rm mod}\, q\right\}\right|.
$$
Since $|\Gamma\cap S_T|\asymp \left|\mathcal{O}(T)\right|$
by (\ref{eq:vol1}), this completes the proof.
\end{proof}

\section{Linnik-type congruence problems  on varieties and orbits}\label{sec:linnik}

We start by proving Theorem \ref{th:linnik} for group varieties.
Let ${\sf G}\subset \hbox{GL}_n$
be a connected $\mathbb{Q}$-simple simply connected algebraic
group defined over $\mathbb{Q}$. We assume that $\sf G$ is isotropic
over $\mathbb{R}$, and denote by $\alpha=\alpha(G)>0$ the volume growth exponent
of ${\sf G}(\mathbb{R})$, defined as in (\ref{eq:alpha}).
Let $f$ be a regular function on $\sf G$ defined over $\mathbb{Q}$
that decomposes into product of $t=t(f)$ absolutely irreducible factors defined over $\mathbb{Q}$.
We assume that $f({\sf G}(\mathbb{Z}))\subset \mathbb{Z}$ and $f$
is weakly primitive. 

\begin{theorem}\label{th:lin group}
Let 
\begin{align*}
\sigma&>\sigma_0:=\alpha^{-1}\dim({\sf G})(1+\dim({\sf G}))2n_e(p),\\
r&>\frac{9\alpha \sigma}{\alpha \sigma-\dim({\sf G})}\cdot \sigma_0 \cdot t(f)\deg(f).
\end{align*}
Then there exists $q_0>0$ such that for every coprime $b,q\in\mathbb{N}$
satisfying $q\ge q_0$ and $b\in f({\sf G}(\mathbb{Z}))\,\hbox{\rm mod}\, q$, one can find
$x\in {\sf G}(\mathbb{Z})$ such that
\begin{enumerate}
\item[(i)] $f(x)$ is a product of at most $r$ prime factors,
\item[(ii)] $f(x)=b\,\hbox{\rm mod}\, q$ and $\|x\|\le q^\sigma$.
\end{enumerate}
\end{theorem}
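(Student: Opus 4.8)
**

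The plan is to combine two ingredients already developed in the paper: the almost-prime count in sets $S_T$ adapted to the orbit $\cO=\Gamma v$ (the engine behind Theorem \ref{th:primes}, in the form of the sieve estimate \eqref{eq:sum}), and the congruence-lifting estimate of Theorem \ref{th:lift quant sym} (in the group-variety version, which is just Theorem \ref{th:lift quant} applied to $\Gamma={\sf G}(\mathbb{Z})$ and the balls $B_T$). The key point is that for a \emph{fixed} target residue $b\,\hbox{\rm mod}\,q$ we need a lower bound on the number of $x\in\Gamma\cap B_T$ with $\tilde f(x)=f(xv)$ coprime to a suitable $P_z$ \emph{and} $f(xv)=b\,\hbox{\rm mod}\,q$ simultaneously, with $T=q^\sigma$. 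Since $\gcd(b,q)=1$, imposing $f(x)=b\,\hbox{\rm mod}\,q$ in particular forces $\gcd(f(x),q)=1$, so the congruence condition mod $q$ is compatible with (indeed implies part of) the coprimality-to-small-primes condition, provided $z$ and $q$ are arranged so that the prime divisors of $q$ are $\le z$; one then folds the modulus $q$ into the sieve level as in \cite{NS}.

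First I would run the weighted sieve of \cite{NS} not on all of $\Gamma\cap B_T$ but on the subset $\{\gamma\in\Gamma\cap B_T:\ f(\gamma v)=b\,\hbox{\rm mod}\,q\}$. The required input is the equidistribution statement: for every squarefree $d$ coprime to $q$ and every admissible residue class,
\begin{align*}
\#\{\gamma\in\Gamma\cap B_T:\ \gamma=\gamma_0\,\hbox{\rm mod}\, dq\}=\frac{\vol(B_T)}{[\Gamma:\Gamma(dq)]}\Bigl(1+O_\epsilon\bigl((dq)^{\dim({\sf G})}\vol(B_T)^{-\theta+\epsilon}\bigr)\Bigr),
\end{align*}
which is exactly Theorem \ref{th:S-arith} together with Lemma \ref{l:upper0}, with $\theta=(2n_e(p))^{-1}(1+\dim({\sf G}))^{-1}$ (using $a_S=1$, $d_S=\dim({\sf G})$ in the present $\mathbb{Q}$-split real setting). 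The sieve then produces, for $z=T^{\kappa}$ with $\kappa$ chosen so that the total level $z^2 q$ stays below the admissible range $\vol(B_T)^{\theta}\asymp T^{\alpha\theta}$, a lower bound of the shape $\gg \dfrac{|\Gamma\cap B_T|}{[\Gamma:\Gamma(q)]\,(\log T)^{t(f)}}$ for the number of $\gamma$ counted. This is where the constraint $\alpha\sigma>\dim({\sf G})$ in the hypothesis on $r$ comes from: one needs $q=T^{1/\sigma}$ to consume only a vanishing fraction of the available level, leaving room $\kappa\asymp(\alpha\sigma-\dim({\sf G}))/(\alpha\sigma)$ for the small primes, which is precisely what pins down $r$ via $r>\deg(f)/(\alpha\kappa)$ after accounting for the $t(f)$ factor.

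Then I would translate the bound back to the orbit $\cO$. As in the proof of Theorem \ref{th:primes}, the fibers of $\pi:\Gamma\cap B_T\to\cO(T)$, $\gamma\mapsto\gamma v$, are uniformly bounded (here using that ${\sf L}$ has no nontrivial characters and the relevant Cartan-type uniqueness, or simply that ${\sf X}={\sf G}v$ is closed and the stabiliser is as assumed); and since for $\gamma\in\Gamma\cap B_T$ one has $|f(\gamma v)|\ll T^{\deg(f)}=q^{\sigma\deg(f)}$ while every prime factor is $\ge z\gg T^{\kappa}$, every $f(\gamma v)$ counted has at most $r$ prime factors. Because the lower bound above is $\gg \dfrac{|\Gamma\cap B_T|}{[\Gamma:\Gamma(q)](\log T)^{t(f)}}>0$ for $q\ge q_0$ and $T=q^\sigma$, the set is nonempty, which yields the desired $x\in{\sf G}(\mathbb{Z})$ with $f(x)$ an $r$-almost-prime, $f(x)=b\,\hbox{\rm mod}\,q$, and $\|x\|\le q^\sigma$.

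The main obstacle I anticipate is the uniformity in $q$: the sieve of \cite{NS} was run at a level depending only on $T$, whereas here we must simultaneously sieve out small primes up to $z$ \emph{and} fix a full residue class modulo a large modulus $q$ which is itself a power of $T$. Making the weighted sieve tolerate a ``prescribed'' large modulus requires checking that the local densities at primes dividing $q$ are handled correctly (they contribute the factor $1/|f(\cO)\,\hbox{\rm mod}\,q|$ rather than a sieve weight) and that the error term in Theorem \ref{th:S-arith}, which carries the factor $[\Gamma:\Gamma(dq)]\asymp(dq)^{\dim({\sf G})}$, is still beaten by $\vol(B_T)^{\theta}$ after the worst case $d\approx z^2$; this is exactly the inequality that forces $\sigma>\sigma_0$ and the stated lower bound on $r$, and verifying it cleanly — together with the bookkeeping that the implied constants stay independent of $b$ and $q$ — is the technical heart of the argument.
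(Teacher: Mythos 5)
Your proposal captures the essential strategy of the paper's proof: sieve on the congruence-restricted set $\gamma_0\Gamma(qN_2)\cap B_T$ rather than on all of $\Gamma\cap B_T$, feed Theorem \ref{th:S-arith} (uniform in all congruence subgroups) into the remainder estimate, and observe that $\sigma>\sigma_0$ guarantees the sample size $X=|\gamma_0\Gamma(qN_2)\cap B_T|\gg q^{\alpha'\sigma-\dim({\sf G})}$ is positive and large enough to leave room $\kappa\asymp(\alpha\sigma-\dim({\sf G}))/(\alpha\sigma)$ for the sieve level; that last inequality is indeed exactly what pins down $r$. You also correctly identify the genuinely novel point, namely that the modulus $q$ is now a power of $T$ and one must check the remainder bound survives the extra $[\Gamma:\Gamma(q)]\asymp q^{\dim({\sf G})}$ factor.

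Two small corrections. First, your framing in terms of the orbit $\cO=\Gamma v$, the sets $S_T$, and the fiber-counting map $\pi:\Gamma\cap B_T\to\cO(T)$ is unnecessary here: Theorem \ref{th:lin group} is the pure group-variety case, $f$ is a regular function directly on ${\sf G}$, and the paper uses the balls $B_T$ and Theorem \ref{th:S-arith}, not the $S_T$ or Theorem \ref{th:GN2} machinery (those enter only in Theorems \ref{th:primes} and \ref{th:linnik2}). Second, the reference to Theorem \ref{th:lift quant sym} should be Theorem \ref{th:lift quant}: the former is the symmetric-variety variant. Finally, one technical point your sketch glosses over is weak primitivity: the paper writes $f=g/N$, splits $N=N_1N_2$ according to coprimality with $q$, replaces the congruence by $g\equiv bN\pmod{qN_2}$, and sieves modulo $dN_1$ over squarefree $d\in\mathcal{P}_{q,z}$; this bookkeeping is what makes the local densities $\rho(d)$ multiplicative and yields the estimate $\rho(p)=t(f)+O(p^{-1/2})$ needed for $(A_2)$. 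Your remark that ``the local densities at primes dividing $q$ contribute the factor $1/|f(\cO)\bmod q|$'' is not quite how the paper handles it: by sieving only over primes coprime to $q$, and working inside a single coset $\gamma_0\Gamma(qN_2)$ where the residue mod $q$ is already fixed, those primes never enter the sieve at all, and the coprimality of $b$ with $q$ guarantees they never divide $f(\gamma)$ for $\gamma$ in that coset.
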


\begin{proof}
We write $f(x)=\frac{1}{N}g(x)$ where $g(x)$ is a polynomial
with integral coefficients and $N\in \mathbb{N}$.
Since $f$ is weakly primitive, 
\begin{equation}\label{eq:gcd0}
\gcd(g(\gamma):\gamma\in\Gamma)=N.
\end{equation}
Let $N=N_1N_2$ where $N_1$ is the product of all prime factors coprime to $q$.
Then the condition $f(\gamma)=b\,\hbox{mod}\, q$ is equivalent
to $g(\gamma)=bN\,\hbox{mod}\, qN$. Moreover, because of (\ref{eq:gcd0}),
it is equivalent to $g(\gamma)=bN\,\hbox{mod}\, qN_2$.

According to our assumptions, there exists $\gamma_0\in {\sf G}(\mathbb{Z})$ such that $f(\gamma_0)=b\,\hbox{\rm mod}\, q$.
We set 
\begin{align*}
\Gamma&={\sf G}(\mathbb{Z}),\\
\Gamma_q&=\Gamma(qN_2)=\{\gamma\in \Gamma:\, \gamma=id\,\,\,\hbox{\rm mod}\, qN_2\},\\
\mathcal{O}_q(T)&=\{\gamma\in \gamma_0\Gamma_q:\, \|\gamma\|\le T\}.
\end{align*}
Note that every $\gamma\in \gamma_0\Gamma_q$ satisfies $f(\gamma)=b\,\hbox{\rm mod}\, q$.


Let $\mathcal{P}_{q,z}$ be the set of prime numbers which are coprime to $q$ and bounded by $z$.
Our aim is to estimate from below the cardinality of points $\gamma\in \mathcal{O}_q(T)$ such that
$f(\gamma)$ is coprime to $\mathcal{P}_{q,z}$, which we denote by $S(T,q,z)$.
This will achieved by applying the combinatorial sieve as in \cite[Thm~7.4]{hr} and \cite[Sec.~2]{NS}.
Let 
$$
a_k=|\{\gamma\in\mathcal{O}_q(T): f(\gamma)=k\}|\quad\hbox{and}\quad
X=|\mathcal{O}_q(T)|=\sum_{k\ge 0} a_k.
$$
In order to apply the combinatorial sieve, we need to verify the following conditions:
\begin{enumerate}
\item[($A_0$)] For every square-free $d$ in  $\mathcal{P}_{q,z}$,
\begin{equation}\label{eq:a_k}
\sum_{k=0\,\hbox{\tiny mod}\, d} a_k= \frac{\rho(d)}{d}X +R_d,
\end{equation}
where $\rho(d)$ is a nonnegative multiplicative function such that for primes $p\in
\mathcal{P}_{q,z}$, we have 
\begin{equation}\label{eq:c_1}
\frac{\rho(p)}{p}\le c_1
\end{equation}
for some $c_1<1$.

\item[($A_1$)] Summing over square-free $d$ in $\mathcal{P}_{q,z}$,
$$
{\sum_{d\le X^\tau}}^{\!\!\prime}\, |R_d|\le c_2 X^{1-\zeta}
$$
for some $c_2,\tau,\zeta>0$.

\item[($A_2$)] For every $2\le w\le z$,
\begin{equation}\label{eq:log}
-l\le \sum_{p\in \mathcal{P}_{q,z}: w\le p< z} \frac{\rho(p)\log p}{p}- t\log \frac{z}{w}\le c_3
\end{equation}
   for some $c_3,l,t>0$.
\end{enumerate}
Assuming that ($A_0$), ($A_1$), and ($A_2$), 
\cite[Th.~7.4]{hr} implies that for $z=X^{\tau/s}$ with $s>9t$, the following estimate holds:
\begin{equation}\label{eq:S}
S(T,q,z)
\ge X W(z)\left(C_1-C_2 l\frac{(\log\log 3X)^{3t+2}}{\log X}\right),
\end{equation}
where 
$$
W(z)=\prod_{p\in\mathcal{P}_{q,z}:p\le z} \left(1-\frac{\rho(p)}{p}\right),
$$
and
the constants $C_1, C_2>0$ are determined by $c_1$, $c_2$, $c_3$, $\tau$, $\zeta$, $t$.

We deduce ($A_0$) and ($A_1$) from the estimates on the cardinality of lattice points
given by Theorem  \ref{th:S-arith}. 
Let $\pi_{dN_1}:\Gamma\to\Gamma(dN_1)$ denotes the factor map.
It follows from the strong approximation property that $\gamma_0\Gamma_q$ surjects onto
$\Gamma\to\Gamma(dN_1)$ under $\pi_{dN_1}$.
We set $B_T=\{h\in {\sf G}(\mathbb{R}):\, \|h\|\le T\}$.
By Theorem \ref{th:S-arith}, for every $d$ coprime to $q$ and
$\delta\in \Gamma/\Gamma_q(dN_1)$,
we have $\Gamma_q(dN_1)=\Gamma(qdN)$ and
\begin{align*}
|\delta\Gamma_q(dN_1)\cap B_T|
&=\frac{\vol(B_T)}{[\Gamma:\Gamma(qdN)]}+O_\epsilon\left(\vol(B_T)^{1-(2n_e(p))^{-1}(1+\dim({\sf G}))^{-1}+\epsilon}\right)\\
&=\frac{\vol(B_T)}{[\Gamma:\Gamma(dN_1)]\cdot
  [\Gamma:\Gamma_q]}+O_\epsilon\left(\vol(B_T)^{1-(2n_e(p))^{-1}(1+\dim({\sf  G}))^{-1}+\epsilon}\right)\\
&=\frac{|\gamma_0\Gamma_q\cap
  B_T|}{[\Gamma:\Gamma(dN_1)]}+O_\epsilon\left(\vol(B_T)^{1-(2n_e(p))^{-1}(1+\dim({\sf  G}))^{-1}+\epsilon}\right)\\
&=\frac{X}{|{\sf G}(\mathbb{Z}/(dN_1))|}+
O_\epsilon\left(X^{1-(2n_e(p))^{-1}(1+\dim({\sf G}))^{-1}+\epsilon}\right)
\end{align*}
for every $\epsilon>0$. 
We note that
for $d$ coprime to $q$, we have $f(\gamma)=0\,\hbox{\rm mod}\, d$ if and only if $g(\gamma)=0
\,\hbox{\rm mod}\, dN_1$. Restricting the sums below to $d$ coprime to $q$, we have 
\begin{align*}
\sum_{k=0\,\hbox{\tiny mod}\, d} a_k&= |\{\gamma\in \gamma_0\Gamma_q\cap B_T;\, f(\gamma)=0\,\hbox{mod}\, d\}|\\
&=\sum_{\delta\in \pi_{dN_1}(\gamma_0\Gamma_q): g(\delta)=0\,\hbox{\tiny mod}\, dN_1}
|\delta\Gamma_q(dN_1)\cap B_T| \\
&=|{\sf G}(\mathbb{Z}/(dN_1))\cap \{g=0\}|\cdot \left(\frac{X}{|{\sf
      G}(\mathbb{Z}/(dN_1))|}+O_\epsilon\left(X^{1-(2n_e(p))^{-1}(1+\dim({\sf G}))^{-1}+\epsilon}\right)\right)\\
&=\frac{\rho(d)}{d}X+
O_\epsilon\left(|{\sf G}(\mathbb{Z}/(dN_1))\cap \{g=0\}|X^{1-(2n_e(p))^{-1}(1+\dim({\sf G}))^{-1}+\epsilon}\right),
\end{align*}
where
$$
\rho(d)=\frac{d|{\sf G}(\mathbb{Z}/(dN_1))\cap \{g=0\}|}{|{\sf
      G}(\mathbb{Z}/(dN_1))|}.
$$
As in \cite[Sec.~4.1]{NS}, we deduce that $\rho$ is multiplicative function, 
(\ref{eq:c_1}) holds, and
\begin{equation}\label{eq:rho}
\rho(p)=t(f)+O_f(p^{-1/2}).
\end{equation}
Using that
$$
|{\sf G}(\mathbb{Z}/(dN_1))\cap \{g=0\}|\ll d^{\dim ({\sf G})-1},
$$
we obtain
\begin{align*}
{\sum_{d\le X^\tau}}^{\!\!\prime}\, |R_d|\ll_\epsilon {\sum_{d\le X^\tau}} d^{\dim ({\sf G})-1} 
X^{1-(2n_e(p))^{-1}(1+\dim({\sf G}))^{-1}+\epsilon}\\
\ll (X^\tau)^{\dim({\sf G})}X^{1-(2n_e(p))^{-1}(1+\dim({\sf G}))^{-1}+\epsilon}
\ll X^{1-\zeta}
\end{align*}
for some $\zeta>0$, provided that $\tau<\tau_0=(2n_e(p))^{-1}\dim({\sf G})^{-1}(1+\dim({\sf G}))^{-1}$.
This concludes the proof of ($A_0$) and ($A_1$).

To prove ($A_2$), we observe that it follows from (\ref{eq:rho})
(see \cite[Th.~2.7(b)]{mv}) that
$$
-c_3\le \sum_{ z\le p\le w } \frac{\rho(p)\log p}{p}- t(f)\log \frac{z}{w}\le c_3
$$
for some $c_3>0$. This implies the upper estimate in (\ref{eq:log}).
The lower estimate with $l=O(\log\log q)$ follows from Lemma \ref{l:a} below.

Now it follows from (\ref{eq:S}) that
\begin{equation}\label{eq:s000}
S(T,q,X^{\tau/s})\gg \frac{X}{(\log X)^{t(f)}}\left(C_1-C_2' (\log\log q)\frac{(\log\log X)^{3t(f)+2}}{\log X}\right).
\end{equation}
Here we used that $W(z)\gg (\log z)^{-t(f)}$, which follows from (\ref{eq:rho}).

We apply (\ref{eq:s000}) with $T=q^\sigma$ with $\sigma>\sigma_0$
and sufficiently large $q$.
Then by Theorem \ref{th:lift quant}, Lemma \ref{l:upper0}, and (\ref{eq:222}),
\begin{equation}\label{eq:quant 1}
X=|\gamma_0\Gamma_q\cap B_T|\gg_\sigma \frac{\vol(B_T)}{|\Gamma:\Gamma_q|}
 \gg_{\alpha'} q^{\alpha' \sigma-\dim({\sf G})}
\end{equation}
with $\alpha'<\alpha$. Hence, for sufficiently large $q$,
\begin{equation}\label{eq:quant 2}
S(T,q,X^{\tau/s})\gg_{\sigma,\alpha'} \frac{X}{(\log X)^{t(f)}}
\end{equation}
We note that every point $\gamma$ which is counted in $S(T,q,X^{\tau/s})$ 
satisfies conclusion (ii) of the theorem, and 
$$
|f(\gamma)|\ll T^{\deg(f)}=q^{\sigma\deg(f)},
$$
and every prime $p$ which is coprime to $q$ and divides $f(\gamma)$ must
satisfy 
$$
p>X^{\tau/ s}\gg_{\sigma,\alpha'} q^{(\alpha' \sigma-\dim({\sf G}))\tau s^{-1}}.
$$
Hence, the number of such prime factors is bounded from above by
$$
\frac{\sigma\deg(f)}{(\alpha' \sigma-\dim({\sf G}))\tau s^{-1}}
$$
provided that $q$ is sufficiently large.
Moreover, since $b$ and $q$ are coprime, $f(\gamma)$ is not divisible
by any prime which divides $q$. Hence,
the number of prime factors of $f(\gamma)$
(with multiplicities)
is bounded by 
$$
r>\frac{\sigma\deg(f)}{(\alpha \sigma-\dim({\sf G}))\tau_0 (9t(f))^{-1}}=\frac{9\alpha \sigma}{\alpha \sigma-\dim({\sf G})}\cdot \sigma_0 \cdot t(f)\deg(f).
$$
Hence, every $\gamma$ counted in $S(T,q,X^{\tau/s})$ satisfies conclusion (i) of the theorem as well.

We have shown that every $\gamma$ counted in $S(T,q,X^{\tau/s})$
satisfies (i) and (ii). Since it follows from (\ref{eq:quant 1}) and
(\ref{eq:quant 2}) that $S(T,q,X^{\tau/s})\ge 1$ for sufficiently large $q$,
this completes the proof. 
\end{proof}

In order to complete the proof of the theorem, we need to show the following

\begin{lemma}\label{l:a}
$\sum_{p|q} \frac{\log p}{p}=O(\log\log q)$.
\end{lemma}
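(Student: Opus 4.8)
The claim is the elementary estimate $\sum_{p\mid q}\frac{\log p}{p}=O(\log\log q)$, where the sum runs over the distinct prime divisors of $q$. The plan is to bound this sum by the worst possible configuration of prime divisors: since $\frac{\log p}{p}$ is decreasing for $p\ge 3$, the sum over the prime divisors of $q$ is maximized, for a given number $k$ of prime factors, when those primes are the first $k$ primes $p_1<p_2<\cdots<p_k$. So the first step is to reduce to bounding $\sum_{j\le k}\frac{\log p_j}{p_j}$.

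The second step is to control $k$ in terms of $q$. If $q$ has $k$ distinct prime factors, then $q\ge p_1p_2\cdots p_k\ge 2\cdot 3\cdots$, and by the prime number theorem (or Chebyshev's elementary bounds, which suffice) $\log(p_1\cdots p_k)=\sum_{j\le k}\log p_j = \vartheta(p_k)\gg p_k$, hence $p_k\ll \log q$; correspondingly $k\ll \log q/\log\log q = o(\log q)$. The third step is to estimate $\sum_{p\le x}\frac{\log p}{p}$ for $x\asymp\log q$: by Mertens' theorem this sum is $\log x+O(1)$. Combining, $\sum_{p\mid q}\frac{\log p}{p}\le \sum_{p\le p_k}\frac{\log p}{p}=\log p_k+O(1)\ll \log\log q+O(1)=O(\log\log q)$, which is the desired bound.

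Alternatively, and even more cheaply, one can avoid invoking Mertens entirely: since every prime dividing $q$ is at most $q$, split the sum at, say, $x=2\log q$. The primes $p\le x$ contribute at most $\sum_{p\le 2\log q}\frac{\log p}{p}$, which is $O(\log\log q)$ by the trivial bound $\sum_{n\le y}\frac{\log n}{n}=O((\log y)^2)$ — wait, that is too lossy; better to use $\sum_{p\le y}\frac{\log p}{p}\le \sum_{n\le y}\frac{\log n}{n}\cdot\mathbf{1}_{\text{prime}}$ and Chebyshev, giving $O(\log y)=O(\log\log q)$. The primes $p>x=2\log q$ dividing $q$ number at most $\log q/\log(2\log q)$, and each contributes at most $\frac{\log q}{2\log q}=\tfrac12$, wait that is wrong since $\frac{\log p}{p}\le \frac{\log x}{x}$ is tiny; more precisely each such prime contributes at most $\frac{\log q}{2\log q}$... one must be careful that $\log p\le \log q$ while $p>2\log q$, so each term is $\le \frac{\log q}{2\log q}=\tfrac12$, and there are at most $\log q/\log(2\log q)$ of them, for a total of $O(\log q/\log\log q)$ — which is worse than $\log\log q$. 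So this crude splitting does not immediately work, and one genuinely wants the rearrangement argument of the first approach. I would therefore present the first approach.

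The main obstacle, such as it is, is purely bookkeeping: making the rearrangement step rigorous (that $\frac{\log p}{p}$ is eventually monotone, so replacing the actual primes dividing $q$ by an initial segment of primes only increases the sum) and citing the right elementary estimate ($\vartheta(x)\asymp x$ and $\sum_{p\le x}\frac{\log p}{p}=\log x+O(1)$). Both are standard; there is no analytic difficulty, and the lemma follows in a few lines once the reduction to an initial segment of primes is in place.
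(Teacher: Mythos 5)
Your proof is correct and follows essentially the same strategy as the paper's: reduce to the extremal case where the prime divisors of $q$ are an initial segment of the primes (the paper does this by noting that $(\log p)/p - c_1\log(p+c_2)$ is decreasing, then taking $q$ to be a primorial; you do it by the explicit rearrangement inequality), then invoke Mertens' estimate $\sum_{p\le x}\frac{\log p}{p}=\log x+O(1)$ and the prime number theorem (or Chebyshev) to conclude that the largest prime involved is $\ll\log q$. One small wrinkle you should note when making the rearrangement rigorous: $\frac{\log p}{p}$ is not monotone from $p=2$, since $\frac{\log 2}{2}<\frac{\log 3}{3}$; this is harmless because at most one term is affected and contributes only $O(1)$, but it is worth a sentence. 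You were also right to discard the crude splitting in your second paragraph, which indeed only gives $O(\log q/\log\log q)$.
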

\begin{proof}
It is sufficient to prove the claim for square-free $q$.
Moreover, since the function $p\mapsto (\log p)/p-c_1\log(p+c_2)$,
$c_1,c_2>0$, is decreasing for $p\ge 3$, it remains to verify the estimate when $q$ is product of all consecutive primes
less than $z$. In this case, by \cite[Th.~2.7(b)]{mv},
$$
\sum_{p|q} \frac{\log p}{p}=O(\log z),
$$
and by the Prime Number Theorem,
$$
\log q=\sum_{p\le z} \log p\sim z,
$$
which implies the claim. 
\end{proof}

We note the proof of Theorem \ref{th:lin group} not only implies existence 
of solutions for congruences, but also gives the following quantitative 
estimate.

\begin{theorem}\label{th:linnik quant}
Under the notation of Theorem \ref{th:lin group},
\begin{align*}
\left|\left\{x\in {\sf G}(\mathbb{Z}):\,   
\begin{tabular}{l}
\hbox{$f(x)$ has at most $r$ prime factors}\\
\hbox{$f(x)=b\,\hbox{\rm mod}\, q$ and $\|x\|\le q^\sigma$}
\end{tabular}
\right\}\right|
\gg_{\sigma}
  \frac{1}{|f({\sf G}(\mathbb{Z}))\,\hbox{\rm mod}\, q|}\cdot 
\frac{N_{q^{\sigma}}({\sf G}(\mathbb{Z}))}{(\log q)^{t(f)}}
\end{align*}
for sufficiently large $q$.
\end{theorem}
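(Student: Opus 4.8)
The plan is to rerun the combinatorial sieve from the proof of Theorem~\ref{th:lin group}, but now applied to the \emph{union} of all congruence cosets carrying the prescribed residue $b$, instead of a single coset. Keeping the notation of that proof ($f=g/N$, $N=N_1N_2$ with $N_1$ the part of $N$ coprime to $q$, $\Gamma_q=\Gamma(qN_2)$, $B_T=\{h\in{\sf G}(\mathbb{R}):\|h\|\le T\}$), put
$$
\mathcal{C}_b=\{\delta\Gamma_q\in\Gamma/\Gamma_q:\ f(\delta)\equiv b\bmod q\},\qquad \Gamma_b(T)=\bigcup_{\delta\Gamma_q\in\mathcal{C}_b}(\delta\Gamma_q\cap B_T).
$$
The set $\mathcal{C}_b$ is non-empty since $b\in f({\sf G}(\mathbb{Z}))\bmod q$. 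First I would apply \cite[Th.~7.4]{hr} with $X=|\Gamma_b(T)|$ and $z=X^{\tau/s}$, $s>9t(f)$, verifying hypotheses $(A_0)$--$(A_2)$ just as in the proof of Theorem~\ref{th:lin group}. The one new computation is $\sum_{\gamma\in\Gamma_b(T),\ d\mid f(\gamma)}1$ for square-free $d$ built from primes of $\mathcal{P}_{q,z}$: decomposing $\gamma$ according to its $\Gamma(qdN)$-coset and using Theorem~\ref{th:S-arith} uniformly over cosets and ideals, the Chinese remainder theorem ($\gcd(d,q)=1$) factors the number of admissible cosets as a $d$-independent factor (the cosets with the right residue mod $q$) times the count of $\delta$ with $g(\delta)\equiv0\bmod d$; upon dividing by $X$ the $b$-dependent factor $|\mathcal{C}_b|$ cancels, so one recovers \emph{verbatim} the density $\rho$ of~(\ref{eq:a_k})--(\ref{eq:rho}), with $\rho(p)=t(f)+O_f(p^{-1/2})$.

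With the same $\rho$, $(A_0)$ and $(A_2)$ (the latter with $l=O(\log\log q)$, Lemma~\ref{l:a}) follow exactly as before. For $(A_1)$ the only change is the appearance of $|\mathcal{C}_b|$ in both $X\asymp|\mathcal{C}_b|\vol(B_T)/[\Gamma:\Gamma_q]$ and the bound $\sum_{d\le X^\tau}^\prime|R_d|\ll|\mathcal{C}_b|\vol(B_T)^{1-\theta_0+\epsilon}(X^\tau)^{\dim({\sf G})}$, where $\theta_0$ is the power-saving exponent of Theorem~\ref{th:S-arith}; since $1\le|\mathcal{C}_b|\le[\Gamma:\Gamma_q]\ll q^{\dim({\sf G})}$, a logarithmic comparison shows that the required estimate $\sum^\prime|R_d|\ll X^{1-\zeta}$ still holds for small $\tau,\zeta$, the decisive point being exactly $\sigma>\sigma_0$; and the same lower bound gives $X\gg q^{\alpha'\sigma-\dim({\sf G})}\to\infty$ (here $\mathcal{C}_b\ne\emptyset$ is used). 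Now \cite[Th.~7.4]{hr}, reasoning as in~(\ref{eq:s000})--(\ref{eq:quant 2}) (using $W(z)\gg(\log z)^{-t(f)}$ and $\log X\asymp\log q$), gives $S(T,q,z):=|\{\gamma\in\Gamma_b(T):\ f(\gamma)\ \text{coprime to}\ \mathcal{P}_{q,z}\}|\gg_\sigma X/(\log q)^{t(f)}$. Taking $T=q^\sigma$, every counted $\gamma$ satisfies $\|\gamma\|\le q^\sigma$, $f(\gamma)\equiv b\bmod q$, and --- as at the end of the proof of Theorem~\ref{th:lin group}, since $\gcd(b,q)=1$ excludes prime factors of $f(\gamma)$ dividing $q$ while the others exceed $z\gg q^{(\alpha'\sigma-\dim({\sf G}))\tau/s}$ --- has at most $r$ prime factors. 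Hence the left side of the theorem is $\ge S(q^\sigma,q,z)\gg_\sigma|\Gamma_b(q^\sigma)|/(\log q)^{t(f)}$.

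It remains to replace $|\Gamma_b(q^\sigma)|$ by the normalisation in the statement. By the uniformity in Theorem~\ref{th:S-arith}, $|\delta\Gamma_q\cap B_{q^\sigma}|=\vol(B_{q^\sigma})/[\Gamma:\Gamma_q]+O_\epsilon(\vol(B_{q^\sigma})^{1-\theta_0+\epsilon})$ with main term dominating for $\sigma>\sigma_0$, so $|\Gamma_b(q^\sigma)|\asymp_\sigma|\mathcal{C}_b|\vol(B_{q^\sigma})/[\Gamma:\Gamma_q]$, while $N_{q^\sigma}({\sf G}(\mathbb{Z}))\asymp\vol(B_{q^\sigma})$; so everything reduces to the purely local bound
$$
\frac{|\mathcal{C}_b|}{[\Gamma:\Gamma_q]}=\frac{|\{\delta\in{\sf G}(\mathbb{Z}/qN_2):\ f(\delta)\equiv b\bmod q\}|}{|{\sf G}(\mathbb{Z}/qN_2)|}\ \gg\ \frac{1}{|f({\sf G}(\mathbb{Z}))\bmod q|}.
$$
By strong approximation and the Chinese remainder theorem both sides factor over the primes $p\mid qN_2$, so it suffices to bound, for each such $p$, the density of a non-empty fibre of the reduction of $f$ modulo $p^a$ from below by $(1-O_f(p^{-1/2}))$ times the reciprocal of the size of the image of that reduction. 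I expect this local estimate to be the main obstacle. It is of the same nature as the estimate $\rho(p)=t(f)+O_f(p^{-1/2})$ used above, and would be carried out by a Lang--Weil count of the hypersurfaces $\{f=c\}$ in the reduction of ${\sf G}$ --- using that $\mathbb{C}[{\sf G}]$ is a unique factorisation domain (Lemma~\ref{l:unique}) and $f=f_1\cdots f_t$ with the $f_i$ distinct, absolutely irreducible and defined over $\mathbb{Q}$, so that $\{f=c\}$ is a geometrically irreducible hypersurface of degree $\deg(f)$ for all but $O_f(1)$ values of $c$ --- combined with Hensel lifting to pass from $p$ to $p^a$. Granting this, chaining the displayed inequalities completes the proof.
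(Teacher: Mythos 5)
Your approach is essentially the one the paper takes: both run the combinatorial sieve on the congruence class determined by $b$ and take $T=q^\sigma$. The paper does this by reusing the single-coset estimate (\ref{eq:quant 2}) (namely $S(T,q,X^{\tau/s})\gg_\sigma X/(\log X)^{t(f)}$ for each coset $\gamma_0\Gamma_q\cap B_T$) and then literally summing over the $M:=|\mathcal{C}_b|$ cosets with $f(\gamma_0)\equiv b\bmod q$, using (\ref{eq:quant 1}) to get $X\gg_\sigma N_{q^\sigma}({\sf G}(\mathbb{Z}))/[\Gamma:\Gamma_q]$; you instead re-run the sieve on the union $\Gamma_b(T)$. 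The two are interchangeable here (your $z$ is slightly larger, which only improves the prime-factor count, so the same $r$ works), and neither version requires re-verifying $(A_0)$--$(A_2)$ from scratch since, as you correctly observe, the density $\rho$ is unchanged by the congruence restriction.

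The one substantive point in your write-up is that you made explicit the step
\[
\frac{|\mathcal{C}_b|}{[\Gamma:\Gamma_q]}\;\gg\;\frac{1}{|f({\sf G}(\mathbb{Z}))\bmod q|},
\]
i.e.\ that a non-empty fibre of the reduction of $f$ has size at least a constant times the average. Summing (\ref{eq:quant 2}) over the $M$ relevant cosets gives a lower bound of the form $\gg_\sigma (M/[\Gamma:\Gamma_q])\cdot N_{q^\sigma}({\sf G}(\mathbb{Z}))/(\log q)^{t(f)}$, and the displayed inequality is precisely what converts this into the statement of the theorem; the paper's two-sentence proof uses it implicitly without remarking on it. Your proposed route --- strong approximation to pass to ${\sf G}(\mathbb{Z}/qN_2)$, the Chinese remainder theorem to reduce to prime powers, Lang--Weil applied to the geometrically irreducible hypersurfaces $\{f_i=c\}$, and Hensel lifting --- is the right way to establish it, and you are correct that it is of the same nature as the computation of $\rho(p)=t(f)+O_f(p^{-1/2})$ carried out in \cite[Sec.~4.1]{NS}. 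You should finish this local computation to make the argument self-contained, but the structure of your proof matches the paper's and the step you flag as ``the main obstacle'' is genuinely the only piece not already written out there.
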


\begin{proof}
Since by (\ref{eq:quant 1}) and Theorem \ref{th:S-arith}, 
for every $\gamma_0\in \Gamma_q$ and sufficiently large $q$,
$$
X=|\gamma_0\Gamma_q\cap B_{q^\sigma}|\gg_\sigma \frac{N_{q^{\sigma}}({\sf G}(\mathbb{Z}))}{|\Gamma:\Gamma_q|},
$$
the claim of the theorem follows from (\ref{eq:quant 2}) by summing
over $\gamma_0\in \Gamma/\Gamma_q$ such that $f(\gamma)=b\,\hbox{mod}\, q$.
\end{proof}

\begin{proof}[Proof of Theorem \ref{th:linnik}]
If ${\sf G}$ is anisotropic over $\mathbb{R}$, then $\Gamma$ is finite.
Hence, we may assume that ${\sf G}$ is isotropic.
We apply Theorem \ref{th:lin group} with the function 
$\tilde f: {\sf G}\to \mathbb{C}$ given by $\tilde f(g)=f(gv)$.
Since $\|\gamma v\|\ll \|\gamma\|$, the claim of Theorem \ref{th:linnik} follows.
\end{proof}

\begin{proof}[Proof of Theorem \ref{th:linnik2}]
We apply the argument of the proof of Theorem \ref{th:lin group}
with the sets $S_T\subset {\sf G}(\mathbb{R})$ introduced in Section \ref{sec:affine}
(in place of the sets $B_T$)
and the polynomial function $\tilde f$
on ${\sf G}$ defined by $\tilde f(x)=f(xv)$.
Using the estimate on $|\delta \Gamma_{q}(dN_1)\cap S_T|$
provided by Theorem \ref{th:GN2}, this argument carries out with no changes.
Let $T=q^\sigma$ with $\sigma$ as in Theorem \ref{th:lift quant sym}.
We conclude from (\ref{eq:vol2}) that for sufficiently large $q$,
\begin{equation}\label{eq:quant 11}
X=|\gamma_0\Gamma_{q}\cap S_T|\gg_\sigma \frac{\vol(S_T)}{|\Gamma:\Gamma_q|}
 \gg q^{\alpha \sigma-\dim({\sf G})},
\end{equation}
where $\alpha$ is as in (\ref{eq:vol2}), and 
\begin{equation}\label{eq:quant 22}
S(T,q,X^{\tau/s})\gg_\sigma \frac{X}{(\log X)^{t(f)}},
\end{equation}
where $\tau<\tau_0=(2n_e(p))^{-1}\dim({\sf G})^{-1}(1+3\dim({\sf G}))^{-1}$.
As in the proof of Theorem \ref{th:lin group}, we conclude that
every $\gamma$ counted in $S(T,q,X^{\tau/s})$ is a product of at most $r$ factors, where
\begin{align}\label{eq:rrr}
r&>\frac{\sigma\deg(f)}{(\alpha \sigma-\dim({\sf G}))\tau_0 (9t(f))^{-1}}\\
&=\frac{\sigma}{\alpha \sigma-\dim({\sf G})} 9t(f)\deg(f) \dim({\sf G})(1+3\dim({\sf G}))2n_e(p).\nonumber
\end{align}

Now using (\ref{eq:quant 11}) and (\ref{eq:quant 22}),
for every $\gamma_0\in\Gamma$ such that $f(\gamma_0 v)=b\,\hbox{mod}\, q$, we have the estimate
\begin{align*}
\left|\left\{\gamma\in \gamma_0\Gamma_q\cap S_T:\,   
\hbox{$f(x)$ has at most $r$ prime factors}
\right\}\right|\\\gg_\sigma \frac{X}{(\log X)^{t(f)}}
\gg_\sigma \frac{1}{|\Gamma:\Gamma_q|}\cdot \frac{|\Gamma\cap
S_T|}{(\log q)^{t(f)}}.
\end{align*}
Since every $\gamma\in \gamma_0\Gamma_q$ satisfies 
$f(\gamma v)=b\,\hbox{mod}\, q$, we conclude that
\begin{align*}
&\left|\left\{\gamma\in \Gamma\cap S_T:\,   
\begin{tabular}{l}
\hbox{$f(\gamma v)$ has at most $r$ prime factors}\\
\hbox{$f(\gamma v)=b\,\hbox{\rm mod}\, q$}
\end{tabular}
\right\}\right|\\
\gg_\sigma &\;\;
\frac{|\{\gamma\in \Gamma/\Gamma_q :\, f(\gamma v)=b\,\hbox{\rm mod}\, q
\}|}{|\Gamma:\Gamma_q|}\cdot \frac{|\Gamma\cap
S_T|}{(\log q)^{t(f)}}\\
= &\;\;\frac{1}
{|f(\mathcal{O})\,\hbox{\rm mod}\, q|}
\cdot \frac{|\Gamma\cap
S_T|}{(\log q)^{t(f)}}.
\end{align*}
Since the cardinality of the fibers of the map
$\pi:\Gamma\cap S_T\to \cO(T):\gamma\mapsto \gamma v$
is uniformly bounded, we conclude that
\begin{align*}
\left|\left\{x\in\mathcal{O}(T):\,   
\begin{tabular}{l}
\hbox{$f(x)$ has at most $r$ prime factors}\\
\hbox{$f(x)=b\,\hbox{\rm mod}\, a$}
\end{tabular}
\right\}\right|
\gg_\sigma \frac{1}
{|f(\mathcal{O})\,\hbox{\rm mod}\, q|}
\cdot \frac{|\Gamma\cap
S_T|}{(\log q)^{t(f)}},
\end{align*}
which implies the theorem because of (\ref{eq:vol1}) and  (\ref{eq:vol3}).
\end{proof}

\end{document}